\documentclass[a4paper,10pt]{amsart}
\usepackage[utf8]{inputenc}
\usepackage{amssymb}
\usepackage{amsmath}
\usepackage{bbm}
\usepackage{mathrsfs}
\usepackage[all]{xy}
\usepackage{manfnt}
\usepackage{pigpen}
\usepackage{bm}
\usepackage{graphicx}

\usepackage{geometry}
\geometry{a4paper}

\usepackage{tikz}
\title{On the functors associated with beaded open Jacobi diagrams}
\author{Christine Vespa}
\address{Universit\'e de Strasbourg, CNRS, IRMA UMR 7501, F-67000 Strasbourg, France.}
\email{vespa@math.unistra.fr}
\urladdr{http://irma.math.unistra.fr/~vespa/}

\keywords{Jacobi diagrams, functors on free groups, polynomial functors}
\subjclass[2010]{57M27, 57K16, 18A25, 18M05}
\thanks{This work was partially supported by the ANR Project {\em ChroK}, {\tt 
ANR-16-CE40-0003}, ANR Project AlMaRe {\tt ANR-19-CE40-0001-01} and ANR Project HighAGT {\tt ANR-20-CE40-0016}}
\newtheorem{THM}{Theorem}

\newtheorem{PROP}[THM]{Proposition}
\newtheorem{thm}{Theorem}[section]
\newtheorem{prop}[thm]{Proposition}
\newtheorem{cor}[thm]{Corollary}
\newtheorem{lem}[thm]{Lemma}

\theoremstyle{definition}

\theoremstyle{remark}
\newtheorem{rem}[thm]{Remark}


\renewcommand{\epsilon}{\varepsilon}
\renewcommand{\theta}{\vartheta}

\newcommand{\f}{\mathcal{F}}

\newcommand{\nat}{\mathbb{N}}

\newcommand{\gr}{\mathbf{gr}}

\newcommand{\A}{\mathfrak{a}}

\newcommand{\ev}{\mathbb{K}\text{-}\mathrm{Mod}}
\numberwithin{equation}{section}

\begin{document}

\begin{abstract}
Morphisms in the linear category $\mathbf{A}$ of Jacobi diagrams in handlebodies give rise to interesting contravariant functors on the category $\gr$ of finitely-generated free groups, encoding part of the composition structure of the category $\mathbf{A}$. These functors correspond, via an equivalence of categories given by Powell, to functors given by beaded open  Jacobi diagrams. We study the polynomiality  of these functors and whether they are outer functors. These results are inspired by and generalize previous results obtained by Katada.
\end{abstract}

\maketitle
\section{Introduction}
In \cite{HM}, Habiro and Massuyeau extend the Kontsevich integral to construct a functor from the category of bottom tangles in handlebodies to the  linear category $\mathbf{A}$ of Jacobi diagrams in handlebodies. This category has $\mathbb{N}$ as objects and for $n,m \in \mathbb{N}$, a generator of the vector space $\mathbf{A}(n,m)$ can be represented by a Jacobi diagram whose edges are oriented, such that each univalent vertex is embedded into the interior of the $1$-manifold $X_m$, consisting of $m$ arcs  and where we have beads coloured with elements of the free group of rank $n$ on edges. For example, for $F_3=\langle x_1, x_2, x_3 \rangle$,
the following is a non-zero element of $\mathbf{A}(3,2)$:
\[
	\vcenter{\hbox{\begin{tikzpicture}[baseline=1.8ex,scale=0.5]
	\draw[->,>=latex]  (4,0) to[bend right=45] (0,0);
		\draw[->,>=latex]  (10,0) to[bend right=45] (6,0);
			\draw[thick, dotted][-To,>=latex]   (5,2) to (3, 1.33);
				\draw[thick, dotted]  (3,1.33) to (2, 1);
				\draw[thick, dotted] [-To,>=latex]    (8,1) to (6.5, 1.5);
					\draw[thick, dotted] (6.5, 1.5) to (5, 2);
					\draw[thick, dotted]  (6,1.16) to (5, 2);
						\draw[thick, dotted][-To,>=latex]   (6.8,.6) to (6,1.16);
		\draw[fill=white] (.8,.6) circle (3pt);
		\draw (.8,.6) node[above] {$\scriptstyle{x_2^{-1}}$};
			\draw[fill=white] (4,1.65) circle (3pt);
		\draw (4,1.7) node[above] {$\scriptstyle{x_1^{-1}}$};
		\draw[fill=white] (5.5,1.5) circle (3pt);
		\draw (5.5, 1.5) node[below] {$\scriptstyle{x_3}$};
	\draw[fill=white] (7.3,0.8) circle (3pt);
		\draw (7.6, 0.8) node[below] {$\scriptstyle{x_1}$};
	\draw[fill=white] (9.3,0.5) circle (3pt);
		\draw (9.3, 0.5) node[below] {$\scriptstyle{x_1}$};
\draw[->] (4.9,2) +(80:.3) arc(-260:80:.3);
\draw[fill=black] (5,2) circle (1.5pt);
		\end{tikzpicture}}}
		\]

The definition of the composition in the linear category $\mathbf{A}$ is natural from the geometric point of view. However, it is quite complicated to understand it algebraically and this paper sheds some light on this. 

To study the composition in a category, we can fix an object $n$ in the category and look at the composition of the morphisms from $n$ with any morphism of the category.  In our setting, this corresponds to studying, for $n$ an object of $\mathbf{A}$, the linear functor $\mathbf{A}(n,-): \mathbf{A} \to \ev$ where $\mathbf{A}(-,-)$ denotes the $\mathbb{K}$-vector space of morphisms in $\mathbf{A}$, for $\mathbb{K}$ a field of characteristic zero. 
These functors being still too complicated to study, we restrict them to the subcategory $\mathbf{A}_0$ of $\mathbf{A}$ which is equivalent, by \cite[p. 630]{HM}, to the $\mathbb{K}$-linearization of the opposite of the category $\gr$ of finitely-generated free groups. 
This gives rise to functors 
$$\mathbf{A}(n,-): \gr^{op} \to \ev$$
encoding the composition of morphisms in $\mathbf{A}$ from $n$ with a morphism in the subcategory $\mathbf{A}_0$. The graduation by the degree $d$ of the Jacobi diagrams defines subfunctors of $\mathbf{A}(n,-)$:
$$\mathbf{A}_d(n,-): \gr^{op} \to \ev$$

For $n=0$, these functors have been studied by Katada in \cite{Katada, KatadaII}. Katada shows that  $\mathbf{A}_d(0,-)$ is a polynomial functor of degree $2d$ which is an outer functor: i.e. for all $m\in \nat$, the inner automorphisms act trivially on $\mathbf{A}_d(0,m)$. She also gives the complete structure of the functors $\mathbf{A}_d(0,-)$ for $d \in \{1, 2, 3\}$ and, for general $d$, a decomposition of the functor $\mathbf{A}_d(0,-)$ into indecomposables.

The aim of this paper is to study, more generally, the functors $\mathbf{A}_d(n,-): \gr^{op} \to \ev$ for $d, n \in \mathbb{N}$.

Our first result shows that the polynomiality of the functors $\mathbf{A}_d(0, -)$ is exceptional.
\begin{PROP} [Proposition \ref{poly-A_d(n)}]
For $d,n \in \mathbb{N}$, the functor $\mathbf{A}_d(n, -):\gr^{op} \to \ev$ is polynomial iff $n=0$. 
\end{PROP}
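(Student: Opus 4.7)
The direction $n=0 \Rightarrow$ polynomial is Katada's theorem \cite{Katada} recalled in the introduction, so the plan addresses the converse: assuming $n \geq 1$, the goal is to show that for every $d \in \mathbb{N}$, the functor $\mathbf{A}_d(n,-) : \gr^{op} \to \ev$ fails to be polynomial. The strategy is to exhibit, for each integer $k \geq 1$, a non-zero element in the $k$-th cross effect of $\mathbf{A}_d(n,-)$, thereby ruling out polynomiality of degree $<k$ for every $k$. Recall that for a contravariant functor $F$ on $\gr$, the $k$-th cross effect at $(F_1, \dots, F_1)$ is the kernel of
\[
F(F_k) \longrightarrow \bigoplus_{i=1}^k F(F_{k-1})
\]
induced by the $k$ group-inclusions $F_{k-1} \hookrightarrow F_k$ omitting one generator, and polynomiality of degree $\leq D$ requires this kernel to vanish for $k > D$.

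I would first dispose of the case $d=0$. The equivalence $\mathbf{A}_0 \simeq \mathbb{K}[\gr^{op}]$ recalled in the introduction identifies the degree-zero subfunctor with $\mathbb{K}[\hom(-, F_n)]$, whose value at $F_m$ is $\mathbb{K}[F_n]^{\otimes m}$ and whose $\gr^{op}$-action is given by precomposition. Under this identification, the map $\mathbb{K}[F_n]^{\otimes k} \to \mathbb{K}[F_n]^{\otimes (k-1)}$ induced by the $i$-th inclusion $F_{k-1} \hookrightarrow F_k$ is the augmentation $\varepsilon : \mathbb{K}[F_n] \to \mathbb{K}$, $[h] \mapsto 1$, applied to the $i$-th tensor factor. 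Choosing any $g \in F_n \setminus \{e\}$ (available since $n \geq 1$), the element
\[
u_k := ([g] - [e])^{\otimes k} \in \mathbb{K}[F_n]^{\otimes k} = \mathbf{A}_0(n, k)
\]
is non-zero, and lies in the $k$-th cross effect because $[g]-[e]$ is in the augmentation ideal and is thus killed by contracting any single tensor factor. This proves that $\mathbf{A}_0(n,-)$ is not polynomial.

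To handle $d \geq 1$, the plan is to lift the $u_k$ to $\mathbf{A}_d(n,-)$ via the graded-algebra structure on $\mathbf{A}(n, m)$ given by disjoint union of Jacobi diagrams. A non-zero degree-$d$ Jacobi diagram $D_d$ that sits naturally in $\mathbf{A}_d(n, m)$ for every $m$ (for instance, a closed trivalent diagram with suitably chosen beads) defines a natural transformation $D_d \cup (-) : \mathbf{A}_0(n,-) \to \mathbf{A}_d(n,-)$, and $D_d \cup u_k$ is a candidate non-zero element in the $k$-th cross effect of $\mathbf{A}_d(n,-)$. The key obstacle, and the main technical point, is to verify that $D_d \cup u_k \neq 0$ in $\mathbf{A}_d(n,k)$: one must rule out that disjoint union with $D_d$ collapses these specific elements through hidden AS/IHX/STU-type relations. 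I expect this to follow either from a grading or filtration argument on $\mathbf{A}(n,m)$, or more directly from the description of $\mathbf{A}_d(n,-)$ via beaded open Jacobi diagrams afforded by Powell's equivalence, in which the combinatorial shape of a diagram decouples from its $F_n$-valued beads, so that the cross-effect computation reduces essentially to the $d=0$ case handled above.
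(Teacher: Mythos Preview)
Your treatment of the direction $n=0$ (via Katada) and of the case $d=0$, $n\geq 1$ is fine and matches the paper's Lemma~\ref{lm-d=0}: the identification $\mathbf{A}_0(n,-)\simeq P_n\simeq \mathbb{K}[F_n]^{\otimes(-)}$ and the element $([g]-[e])^{\otimes k}$ do the job.

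The genuine gap is in your $d\geq 1$ step. Your candidate $D_d$, ``for instance, a closed trivalent diagram with suitably chosen beads'', does not exist in $\mathbf{A}(n,m)$: by the definition recalled in Section~\ref{rappels-HM}, every connected component of a Jacobi diagram on $X_m$ must contain at least one univalent vertex embedded in $X_m$. So there is no closed degree-$d$ element living simultaneously in all $\mathbf{A}_d(n,m)$, and hence no natural transformation $D_d\cup(-):\mathbf{A}_0(n,-)\to\mathbf{A}_d(n,-)$ of the kind you propose. This also undercuts the hoped-for ``decoupling'' argument at the end: since $D_d$ must be anchored to some arcs, the combinatorial shape and the bead data on $X_m$ are intertwined, and the non-vanishing of $D_d\cup u_k$ needs an argument rather than an appeal to a filtration.

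The paper's fix (Lemma~\ref{lm-d>0}) is close in spirit to what you are aiming for but implemented differently. One works with the cokernel description of $\widetilde{cr}_k$: the image of $\bigoplus_i N(r^{k}_{\hat{i}})$ is spanned by diagrams on $X_k$ having at least one arc that is empty with trivial bead. For $k\geq 2d+1$ one takes the degree-$d$ Jacobi diagram consisting of $d$ chords, its $2d$ univalent vertices placed on the first $2d$ arcs, and a non-trivial bead $w\neq 1\in F_n$ on the $k$-th arc. This element is visibly not of the ``insert a trivial arc'' form, so it survives in the cokernel; no delicate relation-chasing among AS/IHX/STU is needed. If you revise, replace the closed $D_d$ by such a chord diagram attached to $2d$ of the arcs and put your $[g]-[e]$-type data on the remaining arcs; then the cokernel description makes the verification immediate.
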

Using the $\gr^{op}$-graduation of $\mathbf{A}(n, m)$ introduced by Habiro and Massuyeau in \cite{HM}, we obtain, in Proposition \ref{sous-foncteur}, a  subfunctor $\mathbf{A}(n,-)_{\mathbf{0}}$ of $\mathbf{A}(n,-)$, satisfying $\mathbf{A}(0,-)_{\mathbf{0}}=\mathbf{A}(0,-)$. The generators of $\mathbf{A}(n,m)_{\mathbf{0}}$ are those of $\mathbf{A}(n,m)$ which can be represented by a Jacobi diagram on $X_m$ without beads on $X_m$ (but there may be beads on the Jacobi diagram). For example, for $F_3=\langle x_1, x_2, x_3 \rangle$,
the following represents a non-zero element of $\mathbf{A}(3,2)_{\mathbf{0}}$:

\[
	\vcenter{\hbox{\begin{tikzpicture}[baseline=1.8ex,scale=0.5]
	\draw[->,>=latex]  (4,0) to[bend right=45] (0,0);
		\draw[->,>=latex]  (10,0) to[bend right=45] (6,0);
			\draw[thick, dotted][-To,>=latex]   (5,2) to (3, 1.33);
				\draw[thick, dotted]  (3,1.33) to (2, 1);
				\draw[thick, dotted] [-To,>=latex]    (8,1) to (6.5, 1.5);
					\draw[thick, dotted] (6.5, 1.5) to (5, 2);
					\draw[thick, dotted]  (6,1.16) to (5, 2);
						\draw[thick, dotted][-To,>=latex]   (6.8,.6) to (6,1.16);
					\draw[fill=white] (4,1.65) circle (3pt);
		\draw (4,1.7) node[above] {$\scriptstyle{x_1^{-1}}$};
		\draw[fill=white] (5.5,1.5) circle (3pt);
		\draw (5.5, 1.5) node[below] {$\scriptstyle{x_3}$};
\draw[->] (4.9,2) +(80:.3) arc(-260:80:.3);
\draw[fill=black] (5,2) circle (1.5pt);
		\end{tikzpicture}}}
		\]

The graduation by the degree $d$ of the Jacobi diagrams defines a subfunctor $\mathbf{A}_d(n,-)_{\mathbf{0}}$ of $\mathbf{A}(n,-)_{\mathbf{0}}$. Considering the subspace of $\mathbf{A}_d(n,m)_{\mathbf{0}}$ generated by the Jacobi diagrams having at least $t$ trivalent vertices, we obtain subfunctors $\mathbf{A}_d^t(n,-)_{\mathbf{0}}$ of $\mathbf{A}_d(n,-)_{\mathbf{0}}$ defining a filtration:
\begin{equation} \label{filtration-intro}
0=\mathbf{A}^{2d}_{d}(n, -)_{\mathbf{0}}\subset \ldots \subset \mathbf{A}^{1}_d(n, -)_{\mathbf{0}}\subset \mathbf{A}^{0}_d(n, -)_{\mathbf{0}}= \mathbf{A}_d(n, -)_{\mathbf{0}}
\end{equation}
corresponding, for $n=0$, to the filtration considered by Katada in \cite{Katada}.

These functors satisfy the following:
\begin{THM} [Theorem \ref{poly_1}]
For $n \in \mathbb{N}$ and $d\geq 1$, the functor $\mathbf{A}_{d}(n,-)_{\mathbf{0} }: \gr^{op} \to \ev$ is polynomial of degree $2d$ and the filtration (\ref{filtration-intro}) corresponds to the polynomial filtration.
\end{THM}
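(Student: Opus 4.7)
The plan is to adapt Katada's analysis of the $n=0$ case \cite{Katada}, organising everything around the cross-effect characterization of polynomiality: a functor $F:\gr^{op}\to\ev$ is polynomial of degree $\leq k$ if and only if its $(k+1)$-th cross effect with respect to the free product in $\gr$ vanishes, and the polynomial filtration is given by $p_kF\subset F$ being the maximal subfunctor of polynomial degree $\leq k$. The key structural remark is that in the subfunctor $\mathbf{A}_d(n,-)_{\mathbf{0}}$ the beads never sit on $X_m$, so the $\gr^{op}$-action leaves the bead data untouched and acts only through the univalent vertices attached to $X_m$, exactly as in Katada's bead-free setting.

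\textbf{Upper bound.} Let $D$ be a generator of $\mathbf{A}^t_d(n,m)_{\mathbf{0}}$. Since $D$ has degree $d$ (total $2d$ vertices) and at least $t$ trivalent vertices, it has at most $u:=2d-t$ univalent vertices on $X_m$. For $\iota:F_{m'}\to F_m$ in $\gr$, the induced structural map redistributes each univalent vertex of $D$ over the arcs of $X_{m'}$ according to the word $\iota(x_j)\in F_m$, without touching the internal beads; in particular $D$ is supported on at most $u$ of the arcs of $X_m$. A standard argument (mimicking \cite{Katada} arc-by-arc) then shows that the $(u+1)$-th cross effect, whose elements must by definition have non-trivial support on all $u+1$ free-product blocks, annihilates $D$. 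Hence $\mathrm{cr}_{u+1}\mathbf{A}^t_d(n,-)_{\mathbf{0}}=0$, so $\mathbf{A}^t_d(n,-)_{\mathbf{0}}$ is polynomial of degree $\leq 2d-t$ and is contained in $p_{2d-t}\mathbf{A}_d(n,-)_{\mathbf{0}}$. The case $t=0$ already yields polynomiality of $\mathbf{A}_d(n,-)_{\mathbf{0}}$ of degree at most $2d$.

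\textbf{Identification.} The reverse inclusions $p_{2d-t}\mathbf{A}_d(n,-)_{\mathbf{0}}\subseteq\mathbf{A}^t_d(n,-)_{\mathbf{0}}$ amount to showing that each associated graded $\mathbf{A}^t_d(n,-)_{\mathbf{0}}/\mathbf{A}^{t+1}_d(n,-)_{\mathbf{0}}$ is \emph{purely} of polynomial degree $2d-t$, i.e., contains no non-zero subfunctor of polynomial degree $<2d-t$. This graded piece is spanned by Jacobi diagrams with exactly $u=2d-t$ univalent vertices, and I would detect non-vanishing of its $u$-th cross effect by evaluating on $F_1*\cdots*F_1$ ($u$ copies) and extracting, via the inclusion-exclusion defining the cross effect, the classes of diagrams with exactly one univalent vertex on each of the $u$ arcs of $X_u$. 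For $n=0$ this is precisely Katada's argument. This detection step is the main obstacle: one must check that the bead-slide, bead-multiplication, STU, AS and IHX relations do not create unexpected linear dependencies that collapse the witnesses so constructed. Concretely, the task reduces to showing that the natural $F_n$-coloured analogue of Katada's top graded piece maps non-trivially into $\mathbf{A}^t_d(n,-)_{\mathbf{0}}/\mathbf{A}^{t+1}_d(n,-)_{\mathbf{0}}$ on the detection subspace. Since beads are $\gr^{op}$-inert, this rigidity is plausible, but verifying it carefully is where the real work lies. Once done, the equality of filtrations and in particular the sharp degree $2d$ follow immediately.
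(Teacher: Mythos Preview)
Your upper bound is correct and coincides with the paper's own direct argument (given as a remark right after Theorem~\ref{poly_1}): a generator of $\mathbf{A}^t_d(n,m)_{\mathbf{0}}$ has at most $u=2d-t$ univalent vertices on $X_m$ and, being in homotopy class $\mathbf{0}$, no beads on $X_m$; hence for $m>u$ some arc is empty and the diagram is in the image of $\mathbf{A}_d(n,-)_{\mathbf{0}}(r^{m}_{\hat k})$, so the $(u+1)$-st cross-effect (as a cokernel) vanishes.

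The gap is in the identification step, and it is more than ``unfinished''. You correctly reduce to showing that each graded piece $G:=\mathbf{A}^t_d(n,-)_{\mathbf{0}}/\mathbf{A}^{t+1}_d(n,-)_{\mathbf{0}}$ has no non-zero subfunctor of degree $<u$, but then propose to ``detect non-vanishing of its $u$-th cross effect''. That only shows $G$ has degree exactly $u$; it does \emph{not} rule out a proper subfunctor of lower degree (think of $\A^{\#}\oplus(\A^{\#})^{\otimes 2}$). Likewise, showing that Katada's $F_n$-coloured model ``maps non-trivially into'' $G$ is not enough: you would need that map to be an \emph{isomorphism}, i.e.\ to identify $G$ with a functor of the form $(\A^{\#})^{\otimes u}\otimes_{\mathfrak{S}_u}M$, after which Proposition~\ref{filtration-poly} applies. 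That identification is exactly the verification you defer.

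The paper bypasses this by a genuinely different route. It transports the question along Powell's equivalence $\alpha:\f_{\mathcal{L}ie}\xrightarrow{\simeq}\f_\omega(\gr^{op};\mathbb{K})$, proving (Theorem~\ref{thm-J_d}) that $\alpha^{-1}(\mathbf{A}_d(n,-)_{\mathbf{0}})\simeq J_d^{F_n}$ and (Corollary~\ref{cor-thm}) that the truncation $(J_d^{F_n})_{\leq l}$ corresponds to $\mathbf{A}_d^{2d-l}(n,-)_{\mathbf{0}}$. Since $Cat\mathcal{L}ie(m,n)=0$ for $m<n$, the polynomial filtration on the $\gr^{op}$ side \emph{is} the truncation filtration on the $Cat\mathcal{L}ie$ side (equation~(\ref{filtration-general})), so both the exact degree (from $J_d^{F_n}(2d)\neq 0$, $J_d^{F_n}(2d+1)=0$) and the filtration identification drop out in one line. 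Your ``one vertex per arc'' detection subspace is essentially $\alpha^{-1}(F)(u)\simeq\widetilde{cr}_u(\mathbf{p}_uF)(1,\ldots,1)$; the equivalence $\alpha$ is precisely the machine that certifies this detection is lossless, which is what your hands-on approach would still have to prove.
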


However, contrary to the result of Katada, the functors $\mathbf{A}_d(n,-)_{\mathbf{0} }$  are rarely outer functors:
\begin{THM} [Theorem \ref{Thm-outre}]
For $d,n \in \nat$, the functor $\mathbf{A}_d(n,-)_{\mathbf{0} }$ is an outer functor iff $n=0$ or $d=0$.
\end{THM}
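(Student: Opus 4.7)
The plan is a three-case analysis corresponding to the two easy regimes ($n=0$ and $d=0$) and the main regime $n,d\geq 1$. When $n=0$, the group $F_0$ is trivial and no non-trivial beads can occur, so $\mathbf{A}_d(0,-)_{\mathbf{0}} = \mathbf{A}_d(0,-)$, and Katada's result in \cite{Katada} immediately gives that this functor is outer. When $d=0$, a Jacobi diagram of degree $0$ has no vertices, so $\mathbf{A}_0(n,m)_{\mathbf{0}}$ is the $\kring$-span of the unique bead-free diagram on $X_m$; the resulting functor is constant, hence trivially outer.

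For the remaining case $n,d\geq 1$ I would show that the functor is not outer by exhibiting an inner automorphism of some $F_m$ that acts non-trivially on an explicit element. Fix $m\geq 2$ and construct a test element $D\in\mathbf{A}_d(n,m)_{\mathbf{0}}$ containing a strut whose two univalent vertices are attached to a single arc of $X_m$ and whose edge carries the bead $x_1\in F_n$, together with additional diagram components of suitable shape chosen to bring the total degree to $d$ while interacting minimally with $X_m$. I would then apply the inner automorphism $\iota_{b_1}$ of $F_m$ (conjugation by a generator of $F_m$), which under the equivalence $\mathbf{A}_0\simeq\kring[\gr^{op}]$ corresponds to the morphism in $\mathbf{A}_0(m,m)$ whose $i$-th arc carries the word $b_1 b_i b_1^{-1}$. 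Composing this morphism with $D$ produces new beads on the arcs of $X_m$ in $D$, and the bead-sliding and local relations of $\mathbf{A}$ allow us to push these beads off $X_m$ and re-express the outcome as a $\kring$-linear combination of elements of $\mathbf{A}_d(n,m)_{\mathbf{0}}$.

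The main obstacle is verifying that $\iota_{b_1}\cdot D - D$ is actually non-zero in $\mathbf{A}_d(n,m)_{\mathbf{0}}$, since the defining relations of $\mathbf{A}$ could a priori create cancellations. My plan for this is to pass to the top quotient of the filtration \eqref{filtration-intro}: by Theorem \ref{poly_1} the associated graded admits a clean polynomial description, and in that quotient the action of $\iota_{b_1}$ on the bead-carrying strut reduces to a transparent modification of the strut's $F_n$-bead that is visibly non-trivial precisely because $n\geq 1$. Since non-triviality in the top quotient lifts to non-triviality in $\mathbf{A}_d(n,m)_{\mathbf{0}}$, this will complete the verification that $\mathbf{A}_d(n,-)_{\mathbf{0}}$ fails to be outer whenever $n,d\geq 1$.
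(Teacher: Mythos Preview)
Your handling of the boundary cases is fine, though note that the paper does not cite Katada for $n=0$: it gives an independent proof (Proposition~\ref{Prop-outre}) by showing that the $Cat\mathcal{L}ie$-module $J_d^{F_0}$ satisfies $\mu_{J_d^{F_0}}=0$, using Powell's characterization of outer functors recalled in Section~\ref{Rappels-G}. Citing Katada is of course a legitimate shortcut.

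For the case $n,d\geq 1$, however, your plan has a genuine gap. You propose to detect the non-triviality of $\iota_{b_1}\cdot D - D$ by passing to the top quotient (or the associated graded) of the polynomial filtration. But by Proposition~\ref{filtration-poly}, each graded piece $\mathbf{p}_k(N)/\mathbf{p}_{k-1}(N)$ of \emph{any} polynomial functor $N$ on $\gr^{op}$ is of the form $((\A^{\#})^{\otimes k}\otimes M)^{\mathfrak{S}_k}$ and hence factors through the abelianization $\gr\to\mathbf{ab}$. Since inner automorphisms become the identity after abelianization, every such graded piece is automatically an outer functor. Consequently $\iota_{b_1}\cdot D - D$ \emph{always} vanishes in the associated graded, and non-outerness can never be detected there; it is a genuine extension phenomenon, invisible on the graded level. (For $d=1$ this is exactly the fact that $\mathcal{P}_2^{\#}$ is not outer while its associated graded $\A^{\#}\oplus(\A^{\otimes 2})^{\#}$ is.)

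The paper sidesteps this by working entirely on the $Cat\mathcal{L}ie$ side. Via Theorem~\ref{thm-J_d} and Powell's criterion, outerness of $\mathbf{A}_d(n,-)_{\mathbf{0}}$ is equivalent to the vanishing of the maps $(\mu_{J_d^{F_n}})_k: J_d^{F_n}(k+1)\to J_d^{F_n}(k)$ for all $k$. One then exhibits an explicit element of $J_d^{F_n}(2d)$, namely a disjoint union of $d$ beaded struts, on which $(\mu_{J_d^{F_n}})_{2d-1}$ is computed directly and seen to be nonzero whenever some bead is $\neq 1$. This computation is a pure combinatorial calculation with open Jacobi diagrams and the AS relation, and does not require unwinding the composition in $\mathbf{A}$.
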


We obtain a description of the functor $\mathbf{A}_1(n,-)_{\mathbf{0} }$ generalizing that of the functor $\mathbf{A}_1(0,-)$ given in \cite[Section 4]{Katada}. Let $\A:\gr \to \ev$ be the abelianization functor, $\mathcal{P}_2:\gr \to \ev$ the second Passi functor  (see Section \ref{Action-S_2}) and $(-)^{\#}: \f(\gr; \mathbb{K})^{op} \to  \f(\gr^{op}; \mathbb{K})$ the duality functor, we have: 
\begin{PROP} (Proposition \ref{A11})
For $n \in \nat$, we have a natural equivalence
$$\mathbf{A}_1(n,-)_{\mathbf{0} }\simeq \mathcal{P}_2^{\#} \underset{\mathfrak{S}_2}{\otimes} \mathbb{K}[F_n],$$
where the action of $\mathfrak{S}_2$ on $\mathbb{K}[F_n]$ is given by taking the inverse in $F_n$: $v\mapsto v^{-1}$ and the action of $\mathfrak{S}_2$ on $\mathcal{P}_2^{\#}$ is given in Section \ref{Action-S_2}.
In particular, we have $\mathbf{A}_1(0,-) \simeq S^2 \circ \A^{\#}$.
\end{PROP}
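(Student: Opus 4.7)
The plan is to give an explicit generators-and-relations description of $\mathbf{A}_1(n,m)_{\mathbf{0}}$ using Powell's equivalence between $\mathbf{A}$ and beaded open Jacobi diagrams, and then to construct a natural isomorphism with the right-hand side directly on generators. Under this equivalence, $\mathbf{A}_1(n,m)_{\mathbf{0}}$ is spanned by degree-one beaded open diagrams on $X_m$ whose arcs carry no beads. At degree one the only connected such diagram is a strut: two univalent vertices on $X_m$ joined by a single internal edge. Since beads along one internal edge multiply, such a strut is determined by the ordered pair $(i,j)\in\{1,\ldots,m\}^2$ of arc indices of its two endpoints together with a single element $w\in F_n$, subject to the orientation-reversal relation $(i,j;w)=(j,i;w^{-1})$.

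I would then define a natural transformation
\[
  \Phi_{n,m}\colon \mathcal{P}_2^{\#}(F_m) \otimes \mathbb{K}[F_n] \longrightarrow \mathbf{A}_1(n,m)_{\mathbf{0}}
\]
on a Magnus-type basis of $\mathcal{P}_2^{\#}(F_m)$: the basis element dual to the class of $(y_i-1)(y_j-1)$ paired with $w\in F_n$ is sent to the strut $(i,j;w)$, and the basis element dual to $y_i-1$ paired with $w$ is sent to a strut with both endpoints on arc $i$ and bead $w$, as dictated by the $\mathfrak{S}_2$-action on $\mathcal{P}_2^{\#}$ from Section \ref{Action-S_2}. By that very definition of the $\mathfrak{S}_2$-action (swap on the degree-two piece and sign on the degree-one piece), the orientation-reversal relation on struts translates exactly into the identification built into $\otimes_{\mathfrak{S}_2}$, so $\Phi_{n,m}$ descends to a map from $\mathcal{P}_2^{\#}(F_m)\otimes_{\mathfrak{S}_2}\mathbb{K}[F_n]$. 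Surjectivity is immediate from the explicit description of generators, and injectivity follows either from an explicit inverse that reads the arc indices and effective bead off a strut, or from a dimension count against a Magnus basis of $\mathcal{P}_2(F_m)$.

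The main obstacle is naturality in $\gr^{op}$: for a morphism $\phi:F_m\to F_{m'}$ of $\gr$, composition with the corresponding morphism $\tilde\phi\in\mathbf{A}_0$ (via the Habiro--Massuyeau equivalence \cite{HM}) must match $\mathcal{P}_2^{\#}(\phi)\otimes\id$. This reduces to the combinatorial identity that if $\phi(y_i)=y_{j_1}^{\epsilon_1}\cdots y_{j_r}^{\epsilon_r}$, then pushing an endpoint of a strut from arc $i$ of $X_m$ through $\tilde\phi$ produces the signed sum of struts with endpoints on the arcs $j_s$, whose beads pick up the appropriate prefixes of $\phi(y_i)$; this mirrors exactly the Magnus expansion of $\phi$ modulo $I^3$. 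Once this is verified on a generating set of $\gr$, naturality follows by composition. Specializing to $n=0$ gives $\mathbb{K}[F_0]=\mathbb{K}$ with trivial $\mathfrak{S}_2$-action, so the coinvariant tensor product collapses to the $\mathfrak{S}_2$-coinvariants of $\mathcal{P}_2^{\#}$; the sign summand $\A^{\#}$ is annihilated, the swap summand yields $S^2\A^{\#}$, and Katada's description $\mathbf{A}_1(0,-)\simeq S^2\circ\A^{\#}$ is recovered.
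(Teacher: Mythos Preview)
Your approach is genuinely different from the paper's: you work directly in $\f(\gr^{op};\mathbb{K})$ with an explicit Magnus-type basis of $\mathcal{P}_2^{\#}$, whereas the paper transfers the whole problem to $\f_{\mathcal{L}ie}$ via $\alpha^{-1}$, invokes Theorem~\ref{thm-J_d} and Corollary~\ref{cor-Passi}, and then only has to exhibit a natural isomorphism $J_1^{F_n}\simeq Cat\mathcal{L}ie(2,-)\otimes_{\mathfrak{S}_2}\mathbb{K}[F_n]$ between functors that are supported on the two objects $1$ and $2$; naturality there reduces to compatibility with $\tau\in\mathfrak{S}_2$ and the single generator $\mu_1^2$.

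There is, however, a genuine gap in your description of $\mathbf{A}_1(n,m)_{\mathbf{0}}$. The assertion that ``the only connected such diagram is a strut'' is false: in degree one there is also the \emph{lollipop}, namely the diagram with one univalent vertex on $X_m$ and one trivalent vertex carrying a self-loop with bead $w$. By AS this lollipop satisfies $L_i(w)+L_i(w^{-1})=0$, and by STU one has $S_i(w)-S_i(w^{-1})=L_i(w)$ when both strut endpoints lie on the same arc $i$. In particular your ``orientation-reversal relation'' $(i,j;w)=(j,i;w^{-1})$ is \emph{not} valid when $i=j$: it would force $S_i(w)=S_i(w^{-1})$, hence $L_i(w)=0$ for all $w$, which is false as soon as $n\geq 1$ (this is exactly the phenomenon behind Theorem~\ref{Thm-outre}). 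Consequently your map $\Phi_{n,m}$ is ill-defined on the degree-one piece: sending $(y_i-1)^\vee\otimes w$ to a ``strut with both endpoints on arc $i$'' collides with the image of $(y_i-1)^2{}^\vee\otimes w$ and does not account for the lollipop contribution. The correct target for $(y_i-1)^\vee\otimes w$ is the lollipop $L_i(w)$, and with that correction one must then check compatibility with the STU relation, not merely bead reversal.

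Once repaired, your direct route can be made to work, but it forces you to verify naturality against all the Hopf generators $m_1,\dots,m_5$ of $\gr$ with the STU and lollipop relations in play. The paper's passage through $\f_{\mathcal{L}ie}$ bypasses this: the lollipop enters simply as the value $J_1^{F_n}(1)=\mathbb{K}[F_n]/\langle [w]+[w^{-1}]\rangle$, and the only structure map to match is $J_1^{F_n}(\mu_1^2)$.
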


In Section \ref{Ad0} we give another proof, based on \cite{PV}, of \cite[Theorem 10.1]{KatadaII}, giving a direct sum decomposition of the functor $\mathbf{A}_d(0,-)$ in the category of functors on $\gr^{op}$.

\vspace{.5cm}
One of the main ingredients of this paper is the use of the equivalence of categories given by Powell in \cite{ P-21-2}:
$$\alpha^{-1}: \f_\omega(\gr^{op};\mathbb{K})\xrightarrow{\simeq} \f_{\mathcal{L}ie}$$
where $\f_\omega(\gr^{op};\mathbb{K})$ is the category of analytic functors on $\gr^{op}$ and $\f_{\mathcal{L}ie}$ is the category of $\mathbb{K}$-linear functors from the linear \texttt{PROP} associated with the  operad $\mathcal{L}ie$ to $\ev$ (see Section \ref{CatLie} for further details). It turns out that the polynomial filtration of a functor is easier to understand in the category $\f_{\mathcal{L}ie}$ than in the category $\f_\omega(\gr^{op};\mathbb{K})$  (see  \cite{P-21-2} and Section \ref{CatLie}). It is also easier to show that the action of inner automorphisms is trivial in the category $\f_{\mathcal{L}ie}$ (see  \cite{P-21} and Section \ref{Rappels-G}). The proofs of the previous results are based on the computation of the functor $\alpha^{-1}(\mathbf{A}_d(n,-)_{\mathbf{0} })$. 
In Section \ref{JFm-section}, we introduce the $\mathbb{K}$-vector space  $J_d^{F_n}(m)$ which is the quotient by the AS and the IHX relations, of the $\mathbb{K}$-vector space generated by equivalence classes of open Jacobi diagrams $D$  whose edges are oriented and labelled by $F_n$ (represented by beads) and equipped with a bijection $\{\text{univalent vertices of } D\} \xrightarrow{\simeq } \{1, \ldots, m\}$. For example, the following is a non-zero element of  $J_2^{F_n}(3)$:

\[
	\vcenter{\hbox{\begin{tikzpicture}[baseline=1.8ex,scale=0.4]
				\draw[thick, dotted][-To,>=latex]   (2,2) to (2,0.5);
				\draw[thick, dotted]  (2,0.5) to (2,0);
				\draw[thick, dotted] [-To,>=latex]    (4,4) to (3,3);
					\draw[thick, dotted] (3,3) to (2, 2);
					\draw[thick, dotted] [-To,>=latex]    (0,4) to (1,3);
					\draw[thick, dotted] (1,3) to (2, 2);
	\draw[fill=white] (3.5,3.5) circle (3pt);
		\draw (3.5,3.5) node[right] {$\scriptstyle{x_3}$};
		\draw[fill=white] (2,1) circle (3pt);
		\draw (2,1) node[right] {$\scriptstyle{x_1^{-1}}$};
		\draw (2,0) node[below] {$\scriptstyle{1}$};
		\draw (0,4) node[above] {$\scriptstyle{2}$};
			\draw (4,4) node[above] {$\scriptstyle{3}$};
\draw[->] (1.9,2) +(80:.3) arc(-260:80:.3);
\draw[fill=black] (2,2) circle (1.5pt);
		\end{tikzpicture}}}
		\]

The generators of $J_d^{F_n}(m)$ are called  \textit{$F_n$-beaded open Jacobi diagrams}. The correspondance between the AS relation and the antisymmetry relation for Lie algebras and the IHX relation and the Jacobi relation for Lie algebras implies that this defines a functor $J_d^{F_n}$ in $\f_{\mathcal{L}ie}$
(see Proposition \ref{Foncteurs-J_d}). We have the following:

\begin{THM} [Theorem \ref{thm-J_d}]
For $n,d \in \mathbb{N}$, we have an equivalence of functors in $\f_{\mathcal{L}ie}$:
$$\alpha^{-1}(\mathbf{A}_d(n,-)_{\mathbf{0} })\simeq J_d^{F_n}.$$
\end{THM}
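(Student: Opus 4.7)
The plan is to exhibit an explicit natural isomorphism $\Phi: J_d^{F_n} \xrightarrow{\simeq} \alpha^{-1}(\mathbf{A}_d(n,-)_{\mathbf{0}})$ in $\f_{\mathcal{L}ie}$. To define $\Phi_m$ on a generator $D \in J_d^{F_n}(m)$, I attach the $i$-th univalent vertex of $D$ (under its chosen bijection with $\{1,\ldots,m\}$) to the $i$-th arc of $X_m$. The result lies in $\mathbf{A}_d(n,m)_{\mathbf{0}}$ because no beads are placed on $X_m$. The AS and IHX relations defining $J_d^{F_n}(m)$ are among the relations of $\mathbf{A}$, so $\Phi_m$ is well defined; moreover, each arc of $X_m$ carries exactly one univalent vertex, so the output is a generator of the subspace identified with $\alpha^{-1}(\mathbf{A}_d(n,-)_{\mathbf{0}})(m)$ under Powell's equivalence (see Section \ref{CatLie}).

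To verify that $\Phi$ is a morphism in $\f_{\mathcal{L}ie}$, it suffices to check compatibility with the generators of the Lie PROP, namely permutations of legs and the binary Lie bracket. Permutations are clearly intertwined. For the bracket, under Powell's equivalence the Lie structure on $\alpha^{-1}(F)$ is extracted from the action of $\gr^{op}$-morphisms on $F$; on the Jacobi side, this action corresponds, after using STU to push beads off $X_m$ into the interior of the diagram, to merging two legs at a new trivalent vertex, which matches precisely the Lie-operad structure of $J_d^{F_n}$ coming from the AS/IHX presentation (Proposition \ref{Foncteurs-J_d}).

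For bijectivity, the key tool is the STU relation together with the polynomiality result Theorem \ref{poly_1}. For surjectivity, a generator of the multilinear part of $\mathbf{A}_d(n,m)_{\mathbf{0}}$ is a Jacobi diagram on $X_m$ with no beads on $X_m$; whenever an arc carries more than one univalent vertex, two adjacent such vertices can be fused via STU into a single vertex attached to a new trivalent vertex, strictly decreasing the number of leaves on that arc. Iterating produces a linear combination of diagrams with exactly one univalent vertex per arc, each of which is the image under $\Phi_m$ of a well-defined generator of $J_d^{F_n}(m)$. For injectivity, the only relations between such generators induced by AS, IHX, and STU in $\mathbf{A}$ reduce, within the "one leg per arc" subspace, to AS and IHX; any STU application that changes the number of legs per arc maps out of this subspace and therefore contributes nothing to the multilinear quotient.

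The main obstacle is this last step: precisely identifying $\alpha^{-1}(\mathbf{A}_d(n,-)_{\mathbf{0}})(m)$ with the subspace of diagrams having exactly one leg per arc, and ensuring the iterated STU-reduction introduces no hidden relations. This requires carefully unpacking Powell's construction of $\alpha^{-1}$ from \cite{P-21-2} and keeping track of how the STU relation interacts with the $\gr^{op}$-functoriality built into $\mathbf{A}_d(n,-)_{\mathbf{0}}$, including the fact that beads on $X_m$ created by the composition with a morphism of $\mathbf{A}_0$ can always be reabsorbed into the interior beads coloured by $F_n$.
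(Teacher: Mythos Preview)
Your approach goes in the opposite direction from the paper: you try to build $\Phi: J_d^{F_n} \to \alpha^{-1}(\mathbf{A}_d(n,-)_{\mathbf{0}})$, whereas the paper computes $\alpha(J_d^{F_n})$ via the explicit coend formula $\alpha = Cat\mathcal{A}ss^u \otimes_{Cat\mathcal{L}ie} -$ and constructs a natural transformation into $\mathbf{A}_d(n,-)_{\mathbf{0}}$ by gluing legs onto arcs according to an element of $Cat\mathcal{A}ss^u(c,l)$, then checks the coequalizer condition on the generators $\mu_i^{c+1}$ using STU, and finally invokes the $\Sigma$-module description (\ref{alpha2}) to conclude the map is an isomorphism. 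This difference in direction is not cosmetic; your route has a genuine circularity.

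You invoke Theorem \ref{poly_1} as a key input, but in the paper Theorem \ref{poly_1} is deduced \emph{from} Theorem \ref{thm-J_d} (and Corollary \ref{cor-thm}). Even if you supply an independent proof that $\mathbf{A}_d(n,-)_{\mathbf{0}}$ is polynomial of degree $2d$ (as in the Remark following Theorem \ref{poly_1}), the description (\ref{alpha}) you need reads $\alpha^{-1}(F)(m) \simeq \widetilde{cr}_m(\mathbf{p}_m F)(1,\ldots,1)$: it involves $\mathbf{p}_m F$, not $F$. Identifying $\mathbf{p}_m(\mathbf{A}_d(n,-)_{\mathbf{0}})$ with the ``at least $2d-m$ trivalent vertices'' subfunctor is exactly the second assertion of Theorem \ref{poly_1}, which again rests on Theorem \ref{thm-J_d}. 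Thus your identification of $\alpha^{-1}(\mathbf{A}_d(n,-)_{\mathbf{0}})(m)$ with the ``one leg per arc'' subspace presupposes the result, except at the single value $m=2d$ where $\mathbf{p}_{2d}F=F$ automatically.

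Separately, your STU reduction for surjectivity does not terminate as stated. The STU relation reads $S = T - U$, where $S$ carries one leg (with a trivalent vertex above it) and $T,U$ each carry two; hence $T = S + U$, and a single two-leg diagram is \emph{not} rewritten as a combination of diagrams with strictly fewer legs on that arc. What actually underlies the isomorphism at each value is a PBW-type statement identifying Jacobi diagrams on $X_l$ modulo STU with open Jacobi diagrams whose legs are distributed by a set map to $\mathbf{l}$ modulo AS/IHX; in the paper this is packaged into the appeal to (\ref{alpha2}), and working in the $\alpha$-direction lets one invoke it without first controlling the polynomial filtration of $\mathbf{A}_d(n,-)_{\mathbf{0}}$.
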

The title of this paper reflects the fact that the functors $J_d^{F_n}$ are much easier to study than the functors $\mathbf{A}_d(n,-)_{\mathbf{0} }$. A more in-depth study of the functors $J_d^{F_n}$ will be given in another paper.

\vspace{.5cm}
\textbf{Acknowledgement:} This work was inspired by Mai Katada's papers \cite{Katada, KatadaII}, where the natural appearance of outer polynomial functors intrigued the author of this paper and motivated her to take an interest in the category of Jacobi diagrams in handlebodies. The author is grateful to Mai Katada for communicating her papers. The author is also grateful to Geoffrey Powell for discussions on his papers \cite{P-21, P-21-2} and helpful comments on the previous versions of the manuscript.
\tableofcontents

\noindent
\textbf{Notation.} Denote by:\\
$\mathbb{K}$ a field of characteristic $0$.\\
$\ev$ the category of $\mathbb{K}$-vector spaces.\\
For $n\geq 0$:
\begin{itemize}
\item $\mathbf{n}=\{1, \ldots, n\}$;
\item $\mathfrak{S}_n$ is the symmetric group on $n$ letters;
\item $X_n$ is the oriented $1$-manifold consisting of $n$ arc components;
\item $F_n=\langle x_1, \ldots, x_n \rangle$ is the free group of rank $n$. The trivial group is denoted by $\{1\}$.
\end{itemize}

For generalities on Jacobi diagrams we refer the reader to \cite[Chapter 5]{CDM}.

\section{Functors on $\gr^{op} $}

\subsection{Generalities on $\gr $} \label{gr-op}

Let $\gr $ be the category of finitely-generated free groups. This category is essentially small, with skeleton given by $\nat$, where $n\in \nat$ corresponds to the free group $F_n$ of rank $n$. For clarity, we will sometimes denote the object $n$ by $F_n$. The object $0=F_0=\{1\}$ is a null-object in $\gr $. For $n,m$ objects of $\gr $, we denote by $\mathbf{0}: n \to m$ the composition $n \to 0 \to m$. Explicitly, $\mathbf{0}$ is the homomorphism $F_m \to F_n$ sending each generator to $1$. 

The category $\gr $ is a PROP for the symmetric strict monoidal structure given by the free product.

By Pirashvili's result \cite{P}, the PROP $\gr $ is isomorphic to the free symmetric monoidal category generated by a commutative Hopf monoid. In other words, the morphisms of $\gr $ are generated by the permutations groups in $\gr(n,n)$ for $n \in \nat$ and the following homomorphisms:
\begin{enumerate}
\item $m_1: 1 \to 0$ corresponding to $F_1 \to \{1\}$;
\item $m_2: 1 \to 2$ corresponding to $F_1 \to F_2$ sending the generator $x$ of $F_1$ to $x_1x_2$;
\item $m_3: 0 \to 1$ corresponding to $\{1\} \to F_1$;
\item $m_4: 1 \to 1$ corresponding to $F_1 \to F_1$ sending the generator $x$ of $F_1$ to $x^{-1}$;
\item $m_5: 2 \to 1$ corresponding to $F_2 \to F_1$ sending $x_1$ to $x$ and $x_2$ to $x$ (this is the folding map).
\end{enumerate}

\subsection{Generalities on functors on $\gr$ and $\gr^{op} $}
We denote by $\f(\gr; \mathbb{K})$ (resp. $\f(\gr^{op}; \mathbb{K})$) the category of functors from $\gr$ (resp. $\gr^{op}$) to $\ev$. These categories are abelian.

A functor $M: \gr \to \ev$ (resp. $N: \gr^{op} \to \ev$) is said to be \textit{reduced} if $M(0)=0$ (resp. $N(0)=0$).

Let $P_n: \gr \to \ev$ be the functor $\mathbb{K}[\gr(n,-)]$; $\{P_n, n \in \mathbb{N}\}$ is a set of projective generators of the category $\f(\gr; \mathbb{K})$. By the Yoneda lemma, for $F: \gr \to \ev$, $\text{Hom}_{\f(\gr; \mathbb{K})}(P_n,F)\simeq F(n)$.

We denote by $\bar{P}_1$ the reduced part of $P_1$ i.e. $P_1\simeq \mathbb{K} \oplus \bar{P}_1$. For $G$ a free group $\bar{P}_1(G)$ is the $\mathbb{K}$-vector space underlying the augmentation ideal $IG$ of the $\mathbb{K}$-algebra $\mathbb{K}[G]$. Since $P_0=\mathbb{K}$, we have $\text{Hom}_{\f(\gr; \mathbb{K})}(P_1,F)\simeq F(0) \oplus \text{Hom}_{\f(\gr; \mathbb{K})}(\bar{P}_1,F)$, so 
\begin{equation}\label{End(P_1)}
\text{Hom}_{\f(\gr; \mathbb{K})}(\bar{P}_1,\bar{P}_1)\simeq \bar{P}_1(1)\simeq IF_1.
\end{equation}

Composition with vector space duality functor $(-)^{\#}:\ev \to \ev^{op}$ gives rise to a pair of adjoint functors, named the \textit{duality functors}
$$(-)^{\#}: \f(\gr; \mathbb{K})^{op} \to Func(\gr; \ev^{op})^{op} \xrightarrow{\simeq} \f(\gr^{op}; \mathbb{K})$$
$$(-)^{\#}:  \f(\gr^{op}; \mathbb{K}) \to Func(\gr^{op}; \ev^{op}) \xrightarrow{\simeq} \f(\gr; \mathbb{K})^{op}$$
where the last equivalences are given by the usual equivalence of categories $\f(\gr^{op}; \mathbb{K})^{op}\simeq Func(\gr; \ev^{op})$, where $Func(\gr; \ev^{op})$ is the category of functors from $\gr$ to the opposite of $\ev$.
The duality functors restrict to an equivalence of categories for functors taking finite dimensional values.

Let $\A: \gr \to \ev$ be the abelianization functor that sends a free group $G$ to $(G/[G,G])\underset{\mathbb{Z}}{\otimes} \mathbb{K}$.

The category of outer functors $ \f^{Out}(\gr^{op}; \mathbb{K})$ is the full subcategory of $\f(\gr^{op}; \mathbb{K})$ of functors $F$ such that, for each $n \in \nat$, inner automorphisms act trivially on $F(n)$. Outer functors were introduced in \cite[Section 10]{PV}.
Let $\Omega: \f(\gr^{op}; \mathbb{K}) \to \f^{Out}(\gr^{op}; \mathbb{K})$ be the left adjoint to the inclusion functor $ \f^{Out}(\gr^{op}; \mathbb{K}) \hookrightarrow  \f(\gr^{op}; \mathbb{K})$. The functor $\Omega$ is described explicitly in  \cite[Definition 11.5]{PV}.

\subsection{Polynomial and analytic functors on $\gr^{op} $} \label{rappels-poly}
Polynomial contravariant functors have been considered in  a general setting in \cite[Section 3.1]{HPV}. Here we recall the definitions for contravariant functors on $\gr$.

For $k \in \{1, \ldots, n\}$, let $i^{n}_{\hat{k}}: F_{n-1} \to F_{n}$
be the homomorphism given by
$$i^{n}_{\hat{k}}(x_i)=\left\{
    \begin{array}{ll}
       x_i & \mbox{if } i<k \\
       x_{i+1} & \mbox{if } i\geq k
    \end{array}
    \right.
    $$

The $n$-th cross-effect of a functor $N: \gr^{op} \to \ev$ is a functor  $\widetilde{cr}_{n}(N):  (\gr^{op})^{\times n} \to \ev$. Its evaluation on $F_1$ in each variable $\widetilde{cr}_{n}(N)(1,\ldots, 1)$ is equal to the kernel of the natural homomorphism 
$$N(F_{n}) \xrightarrow{(N(i^{n}_{\hat{1}}), \ldots, N(i^{n}_{\hat{n}}))} \overset{n}{\underset{k=1}{\bigoplus}}N(F_{n-1}).$$

In the examples, it is easier to compute cross-effects using the following equivalent description using a cokernel instead of a kernel.
For $k \in \{1, \ldots, n\}$, let $r^{n}_{\hat{k}}: F_{n} \to F_{n-1}$
be the homomorphism given by
$$r^{n}_{\hat{k}}(x_i)=\left\{
    \begin{array}{ll}
       x_i & \mbox{if } i<k \\
        1 & \mbox{if } i=k \\
       x_{i-1} & \mbox{if } i>k
    \end{array}
    \right.
    $$
$\widetilde{cr}_{n}(N)(1,\ldots, 1)$ is isomorphic to the cokernel of the natural homomorphism 
$$ \overset{n}{\underset{k=1}{\bigoplus}}N(F_{n-1})\xrightarrow{(N(r^{n}_{\hat{1}}), \ldots, N(r^{n}_{\hat{n}}))}N(F_{n}).$$

For $d \in \nat$, a  functor $N: \gr^{op} \to \ev$ is polynomial of degree at most $d$ if  $\widetilde{cr}_{d+1}(N)(1,\ldots, 1)=0$. Let $\f_d(\gr^{op}; \mathbb{K})$ be the full subcategory of polynomial functors of degree at most $d$. The forgetful functor $\f_d(\gr^{op}; \mathbb{K}) \to \f(\gr^{op}; \mathbb{K})$ has a right adjoint denoted by $\mathbf{p}_d$. For $N: \gr^{op} \to \ev$, the functor $\mathbf{p}_d(N)$ is the largest subfunctor of $N$ polynomial of degree $d$. Hence, a functor $N: \gr^{op} \to \ev$ admits a natural filtration, called the \textit{polynomial filtration} of $N$:
$$\mathbf{p}_0(N) \subset \mathbf{p}_1(N) \subset \ldots \subset \mathbf{p}_d(N) \subset \mathbf{p}_{d+1}(N) \subset \ldots \subset N.$$

A functor $N: \gr^{op} \to \ev$ is \textit{analytic} if it is the colimit of its subfunctors $\mathbf{p}_d(N)$. Let  $\f_\omega(\gr^{op};\mathbb{K})$ denote the full subcategory of $ \f(\gr^{op}; \mathbb{K}) $ of analytic functors.

For $d \in \nat$, we have a functor:
$\overline{cr}_{d}: \f_d(\gr^{op}; \mathbb{K})  \to \mathbb{K}[\mathfrak{S}_d]\text{-Mod}$, given on $N: \gr^{op} \to \ev$ by $\overline{cr}_{d}(N)=\widetilde{cr}_{d}(N)(1,\ldots, 1)$ where the action of $\mathfrak{S}_d$ is given by permuting the factors.

The functor $(\A^{\#})^{\otimes d}: \gr^{op} \to \ev$ is polynomial of degree $d$. The following Proposition is the analogue, for contravariant functors, of \cite[Proposition 6.9]{PV}.
\begin{prop}\cite[Proposition 7.20]{P-21-2} \label{filtration-poly}
For $d \in \nat$, the functor $\overline{cr}_{d}: \f_d(\gr^{op}; \mathbb{K})  \to \mathbb{K}[\mathfrak{S}_d]\text{-}\mathrm{Mod}$ has right adjoint given by 
$$M \mapsto ((\A^{\#})^{\otimes d} \otimes M)^{\mathfrak{S}_d}$$
where $\mathfrak{S}_d$ acts diagonally. This functor is exact and $(({\A^{\#}})^{\otimes d} \otimes M)^{\mathfrak{S}_d}$ is semi-simple of polynomial degree $d$. 

For  $N: \gr^{op} \to \ev$ there is a natural short exact sequence
$$0 \to \mathbf{p}_{d-1}(N) \to \mathbf{p}_d(N) \to ((\A^{\#})^{\otimes d} \otimes \overline{cr}_{d}( \mathbf{p}_dN))^{\mathfrak{S}_d}\to 0.$$

\end{prop}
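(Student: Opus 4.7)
The plan is to mirror the proof of the covariant analogue \cite[Proposition 6.9]{PV}, replacing $\A$ throughout by $\A^{\#}$. Writing $T_d(M) := ((\A^{\#})^{\otimes d} \otimes M)^{\mathfrak{S}_d}$, I would first verify that $T_d$ takes values in $\f_d(\gr^{op}; \mathbb{K})$: since $\A^{\#}$ is linear, $(\A^{\#})^{\otimes d}$ is polynomial of degree $d$, tensoring with the constant vector space $M$ preserves polynomial degree, and taking $\mathfrak{S}_d$-invariants is exact in characteristic $0$ by Maschke's theorem, so preserves the polynomial bound.

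The crucial ingredient is the cross-effect computation $\overline{cr}_d((\A^{\#})^{\otimes d}) \simeq \mathbb{K}[\mathfrak{S}_d]$ as a $(\mathfrak{S}_d,\mathfrak{S}_d)$-bimodule. Indeed, evaluating $(\A^{\#})^{\otimes d}$ on $F_d = F_1 * \cdots * F_1$ yields $(\mathbb{K}^d)^{\otimes d}$, whose multilinear part (the cross-effect at $(F_1,\ldots,F_1)$) is spanned by the $d!$ tensors $e_{\sigma(1)} \otimes \cdots \otimes e_{\sigma(d)}$ for $\sigma \in \mathfrak{S}_d$, with one $\mathfrak{S}_d$-action by permutation of the cross-effect variables and the other by permutation of tensor factors. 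Combined with the fact that cross-effects commute with $\mathfrak{S}_d$-invariants in characteristic $0$, this gives
$$\overline{cr}_d(T_d(M)) \simeq (\mathbb{K}[\mathfrak{S}_d] \otimes M)^{\mathfrak{S}_d} \simeq M$$
as $\mathbb{K}[\mathfrak{S}_d]$-modules, which is the counit of the desired adjunction.

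From here the adjunction $\overline{cr}_d \dashv T_d$ on $\f_d(\gr^{op}; \mathbb{K})$ follows by the standard manipulation: one direction of $\hom_{\f_d}(N, T_d(M)) \simeq \hom_{\mathbb{K}[\mathfrak{S}_d]}(\overline{cr}_d N, M)$ is given by applying $\overline{cr}_d$ and composing with the isomorphism above; the reverse is built from the unit $N \to T_d(\overline{cr}_d N)$ constructed explicitly out of the universal map from $N$ to its multilinear part. Exactness of $T_d$ is immediate from exactness of $-\otimes M$ and of $\mathfrak{S}_d$-invariants in characteristic $0$. Semi-simplicity of $T_d(M)$ in polynomial degree $d$ reduces, via the adjunction and Maschke's theorem for $\mathfrak{S}_d$, to the semi-simplicity of the category of $\mathbb{K}[\mathfrak{S}_d]$-modules.

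For the short exact sequence, note that $\mathbf{p}_d(N)/\mathbf{p}_{d-1}(N)$ is polynomial of degree at most $d$ and $\overline{cr}_d$ vanishes on polynomial functors of degree $<d$, giving $\overline{cr}_d(\mathbf{p}_d(N)/\mathbf{p}_{d-1}(N)) \simeq \overline{cr}_d(\mathbf{p}_d N)$. The unit of the adjunction then yields a natural map $\mathbf{p}_d(N)/\mathbf{p}_{d-1}(N) \to T_d(\overline{cr}_d(\mathbf{p}_d N))$ inducing the identity on the degree-$d$ cross-effect. The main obstacle will be showing this map is an isomorphism: injectivity follows because any kernel would be polynomial of degree $\leq d$ with vanishing top cross-effect, hence of degree $\leq d-1$, but it lies in a quotient by $\mathbf{p}_{d-1}(N)$; surjectivity uses semi-simplicity of the target together with the universal property of $\mathbf{p}_d N$ as the maximal polynomial subfunctor of degree $d$, forcing the image to exhaust $T_d(\overline{cr}_d(\mathbf{p}_d N))$.
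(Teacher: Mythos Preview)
The paper does not supply its own proof of this proposition: it is stated with a citation to \cite[Proposition 7.20]{P-21-2} and introduced as the contravariant analogue of \cite[Proposition 6.9]{PV}. There is therefore no proof in the present paper to compare your attempt against.

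Your outline is essentially the standard argument and would go through. Two places deserve tightening. First, the construction of the unit $N \to T_d(\overline{cr}_d N)$ is asserted but not explained; in the contravariant setting (where $\mathbf{p}_d$ is a subfunctor rather than a quotient) the cleanest route is to note that the full subcategory of $\f_d(\gr^{op};\mathbb{K})$ consisting of functors with $\mathbf{p}_{d-1}=0$ is equivalent, via $\overline{cr}_d$, to $\mathbb{K}[\mathfrak{S}_d]$-Mod, and that $T_d$ is the inverse of this equivalence; the adjunction then follows because the inclusion of this subcategory has the left adjoint $N \mapsto N/\mathbf{p}_{d-1}(N)$. Second, your surjectivity argument for the short exact sequence is vague. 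The precise point is that the cokernel of the unit map has vanishing $\overline{cr}_d$ (since the map is an isomorphism on top cross-effect), hence has degree $\leq d-1$; but it is also a quotient of $T_d(\overline{cr}_d(\mathbf{p}_d N))$, which is semi-simple with every simple summand satisfying $\mathbf{p}_{d-1}=0$, so the cokernel is forced to vanish. Your injectivity argument is correct once one observes that an extension of two functors of degree $\leq d-1$ again has degree $\leq d-1$, so the preimage of the kernel in $\mathbf{p}_d(N)$ lies in $\mathbf{p}_{d-1}(N)$.
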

\subsection{Analytic functors on $\gr^{op}$ and $Cat\mathcal{L}ie$-modules} \label{CatLie}
Let $Cat\mathcal{L}ie$ be the  linear \texttt{PROP} associated with the  operad $\mathcal{L}ie$ \cite[Section 5.4.1]{LV}. Explicitly, $Cat\mathcal{L}ie$ is the $\mathbb{K}$-linear category such that  $\text{Ob}(Cat\mathcal{L}ie)=\mathbb{N}$ and 
$$Cat\mathcal{L}ie(m,n)=\underset{f\in \mathbf{Fin}(m,n)}{\bigoplus}\overset{n}{\underset{i=1}{\bigotimes}}\mathcal{L}ie(|f^{-1}(i)|)$$
where $\mathbf{Fin}$ is the category of finite sets. Since $\mathcal{L}ie$ is reduced (i.e. $\mathcal{L}ie(0)=0$) the sum can be taken over the surjections $\mathbf{m} \twoheadrightarrow \mathbf{n}$. For $m \in \nat$, $Cat\mathcal{L}ie(m,1)=\mathcal{L}ie(m)$ and for $m<n$, $Cat\mathcal{L}ie(m,n)=0$. Since $\mathcal{L}ie(1)=\mathbb{K}$, $Cat\mathcal{L}ie(m,m) \simeq \mathbb{K}[\mathfrak{S}_m]$.

Fix a generator $\mu \in \mathcal{L}ie(2)$. For $n\in \mathbb{N}$ and $i\in \{1, \ldots, n\}$, let $\mu_i^{n+1} \in Cat\mathcal{L}ie(n+1,n)$ be the morphism given by the set map $s_i^{n+1}: \mathbf{n+1} \to \mathbf{n}$ defined by $s_i^{n+1}(j)=j$ for $j<n+1$ and $s_i^{n+1}(n+1)=i$ and taking $1 \in \mathcal{L}ie(1)$ for the fibres of cardinal one and $\mu \in \mathcal{L}ie(2)$ for the fiber of cardinal $2$. The $\mathbb{K}$-linear category $Cat\mathcal{L}ie$ is generated (via linear combination  and composition) by the morphisms  $\mu_i^{n+1} \in Cat\mathcal{L}ie(n+1,n)$ and $Cat\mathcal{L}ie(n,n)\simeq \mathbb{K}(\mathfrak{S}_n)$ for $n \in \mathbb{N}$.

Note that a pointed version of $Cat\mathcal{L}ie$ with a shuffle condition on fibers intervenes  in \cite{HV}.

Let $\f_{\mathcal{L}ie}$ be the category of $\mathbb{K}$-linear functors from $Cat\mathcal{L}ie$ to $\ev$.
For $n \in \mathbb{N}$, $Cat\mathcal{L}ie(n,-): Cat\mathcal{L}ie \to \ev$ is a linear functor. By the enriched Yoneda lemma, for $F:Cat\mathcal{L}ie\to \ev$ a $\mathbb{K}$-linear functor, we have an isomorphism:
\begin{equation*}
\text{Hom}_{\f_{\mathcal{L}ie}}(Cat\mathcal{L}ie(n,-), F) \simeq F(n).
\end{equation*}
We deduce that the functors $Cat\mathcal{L}ie(n,-)$, for $n \geq 0$, are projective generators of $\f_{\mathcal{L}ie}$.

For $F \in \f_{\mathcal{L}ie}$ and $i \in \nat$, since $Cat\mathcal{L}ie(m,n)=0$   for $m<n$, $F$ admits a subfunctor $F_{\leq i}$ given by truncation, i.e. 
$$F_{\leq i}(n)=\left\lbrace\begin{array}{ll}
 F(n) & \text{if } n\leq i\\
 0 &  \text{if } n>i
 \end{array}
 \right.
$$
It follows that a functor $F \in \f_{\mathcal{L}ie}$ admits a natural filtration:
$$F_{\leq 0} \subset F_{\leq 1} \subset \ldots \subset F_{\leq d}  \subset F_{\leq d+1} \subset \ldots \subset F.$$

In \cite{P-21-2}, Powell gives an equivalence of categories between $\f_{\mathcal{L}ie}$ and $\f_\omega(\gr^{op};\mathbb{K})$. In particular, Powell constructs explicit exact functors:
$$\alpha: \f_{\mathcal{L}ie} \to  \f_\omega(\gr^{op};\mathbb{K})$$
$$\alpha^{-1}: \f_\omega(\gr^{op};\mathbb{K}) \to \f_{\mathcal{L}ie}$$
giving this equivalence.
By \cite[Corollary 8.9]{P-21-2}, for $F \in  \f_\omega(\gr^{op};\mathbb{K})$ we have an isomorphism:
\begin{equation}\label{alpha}
\alpha^{-1}(F)(d) \simeq \widetilde{cr_d} (\mathbf{p}_d F)(1, \ldots, 1).
\end{equation}

The category $Cat\mathcal{L}ie$ is easier to understand than the category $\gr^{op}$. For example, we have $Cat\mathcal{L}ie(i,j)=0$ for $i<j$.
It follows that it is easier to work with $Cat\mathcal{L}ie$-modules than with functors on $\gr^{op}$. In particular, the polynomiality of functors in $\f_\omega(\gr^{op};\mathbb{K})$ has an easy interpretation in $Cat\mathcal{L}ie$-modules: a functor $F  \in \f_\omega(\gr^{op};\mathbb{K})$ is polynomial of degree equal to $d$ iff $\alpha(F)(d)\not=0$ and $\alpha(F)(k)=0$ for $k>d$. Via the equivalence of categories, the polynomial filtration of a functor in $\f_\omega(\gr^{op};\mathbb{K})$ corresponds to the filtration given by the truncations of $Cat\mathcal{L}ie$-modules. More precisely, for $N: \gr^{op} \to \ev$  and $i\in \nat$ we have:
\begin{equation}\label{filtration-general}
\alpha^{-1}(\mathbf{p}_i(N))=(\alpha^{-1} N)_{\leq i}.
\end{equation}

To prove Theorem \ref{thm-J_d}, we will need the following explicit description of the functor $\alpha$ given in \cite[Theorem 9.17]{P-21-2}.
Let $Cat\mathcal{A}ss^u$ be the linear PROP associated with unital associative algebras; $Cat\mathcal{A}ss^u$ is the $\mathbb{K}$-linear category such that  $\text{Ob}(Cat\mathcal{A}ss^u)=\mathbb{N}$ and 
$$Cat\mathcal{A}ss^u(m,n)=\underset{f\in \mathbf{Fin}(m,n)}{\bigoplus}\overset{n}{\underset{i=1}{\bigotimes}}\mathcal{A}ss^u(|f^{-1}(i)|)$$
where $\mathbf{Fin}$ is the category of finite sets. More explicitly, a generator of $Cat\mathcal{A}ss^u(m,n)$ is represented by a set map $f\in \mathbf{Fin}(m,n)$ and an order of the elements of each fiber of $f$. We denote by $(s_i^{n+1}, i<n+1)$ (resp. $(s_i^{n+1}, n+1<i)$) the morphism in  $Cat\mathcal{A}ss(n+1,n)$  given by the set map $s_i^{n+1}: \mathbf{n+1} \to \mathbf{n}$ and the order $i<n+1$ (resp. $n+1<i$) on the fiber of cardinal $2$.  The morphism of operads $\mathcal{L}ie \to \mathcal{A}ss^u$ induces a functor $Cat\mathcal{L}ie \to Cat\mathcal{A}ss^u$ sending the morphism $\mu_i^{n+1} \in Cat\mathcal{L}ie(n+1,n)$ to $(s_i^{n+1}, i<n+1)-(s_i^{n+1}, n+1<i) \in Cat\mathcal{A}ss(n+1,n)$

By   \cite[Proposition 9.13]{P-21-2} the sets of morphisms in $Cat\mathcal{A}ss^u$  define a functor:
$$Cat\mathcal{A}ss^u: (Cat\mathcal{L}ie)^{op} \otimes  \mathbb{K}[\gr^{op}]  \to \ev.$$
In  \cite[Lemma A.2]{P-21-2}, the functor $Cat\mathcal{A}ss^u(i,-): \gr^{op}  \to \ev$, for $i$ an object of $(Cat\mathcal{L}ie)^{op}$, is described explicitly on the generators of $\gr$ recalled in Section  \ref{gr-op}.
By \cite[Theorem  9.17]{P-21-2}, $\alpha=Cat\mathcal{A}ss^u \underset{Cat\mathcal{L}ie}{\otimes }-$.  Let $\Sigma$ be the category of finite sets and bijections and $\f(\Sigma;\mathbb{K})$  the category of functors from $\Sigma$ to $\ev$. The obvious functor $\Sigma \to \gr^{op}$ induces a functor $ \mathcal{F}(\mathbf{gr}^{op}; \mathbb{K}) \to \f(\Sigma;\mathbb{K})$.  By \cite[Remark 9.18]{P-21-2},  for an object $F$ of $\mathcal{F}_{Lie}$ and $d$ an object of $\gr^{op}$, the functor in  $\f(\Sigma;\mathbb{K})$ associated  with $Cat\mathcal{A}ss^u \underset{Cat\mathcal{L}ie}{\otimes }F$ is given explicitly by:
\begin{equation}\label{alpha2}
\alpha(F)(d)\simeq \underset{i \in \mathbb{N}}{\bigoplus} \mathbb{K}\mathbf{Fin}(i,d) \underset{\mathfrak{S}_i}{\otimes} F(i).
\end{equation}

A $\mathbb{K}[\mathfrak{S}_d]$-module $M$ defines an  object of $\mathcal{F}_{Lie}$ which is $0$ for $n\neq d$ and $M$ on $d$. Such $Cat\mathcal{L}ie$-module will be said \textit{atomic} and will be denoted by $M[d]$.

For a $\mathbb{K}[\mathfrak{S}_d]$-module $M$, by \cite[Example 9.20]{P-21-2}, we have: 
\begin{equation}\label{atomic}
\alpha(M[d])= (\A^{\#})^{\otimes d} \underset{\mathfrak{S}_d}{\otimes} M.
\end{equation}
 This is in the image of the faithful embedding $\mathcal{F}_\omega(\mathbf{ab}^{op}; \mathbb{K}) \hookrightarrow \mathcal{F}_\omega(\mathbf{gr}^{op}; \mathbb{K})$, where $\mathbf{ab}$ is the category of finitely-generated free abelian groups. The category of analytic functors $\mathcal{F}_\omega(\mathbf{ab}^{op}; \mathbb{K})$ is semi-simple. More precisely we have an equivalence of categories:
 $$\mathcal{F}_\omega(\mathbf{ab}^{op}; \mathbb{K}) \simeq \f(\Sigma;\mathbb{K}).$$
The functor $\alpha^{-1}$ extends this equivalence of categories in the sense that we have a commutative diagram:
$$\xymatrix{
 \mathcal{F}_{Lie} \ar[r]^-{\alpha}_-{\simeq} & \mathcal{F}_\omega(\mathbf{gr}^{op}; \mathbb{K})\\
\f(\Sigma;\mathbb{K}) \ar[r]_-{\simeq} \ar@{^{(}->}[u]& \mathcal{F}_\omega(\mathbf{ab}^{op}; \mathbb{K}).  \ar@{^{(}->}[u]
 }$$
 
 \begin{rem}
 For $N: \gr^{op} \to \ev$, by Proposition \ref{filtration-poly} we can consider the graded functor $gr(N)$ associated with the filtered functor $N$, obtained by considering the polynomial filtration. We have $gr(N) =\underset{d \in \nat}{\bigoplus} ((\A^{\#})^{\otimes d} \otimes \overline{cr}_{d}( \mathbf{p}_dN))^{\mathfrak{S}_d}$ and $\alpha(gr(N))$ is the direct sum of atomic functors associated with the $\mathbb{K}[\mathfrak{S}_d]$-module $\overline{cr}_{d}( \mathbf{p}_dN)$ whereas $\alpha(N)$ \textit{is not}, in general, the direct sum of atomic functors. This illustrates the fact that, considering the graded associated with a functor, we lose much of the structure. 
 \end{rem}

\subsection{Outer $Cat\mathcal{L}ie$-modules} \label{Rappels-G}
 In \cite{P-21}, Powell gives a characterization of $Cat\mathcal{L}ie$-modules corresponding to outer functors via the equivalence of categories given in the previous section. These $Cat\mathcal{L}ie$-modules will be called \textit{outer $Cat\mathcal{L}ie$-modules}. We briefly recall Powell's result. 
 
 Let $\tau: \mathcal{F}_{Lie} \to \mathcal{F}_{Lie}$ be the shifting functor given by precomposition with $-+1: Cat\mathcal{L}ie \to Cat\mathcal{L}ie$.
 Let $\mu: \tau \to Id$ be the natural transformation defined as follows:
 for $F \in  \mathcal{F}_{Lie}$,   $\mu_F: \tau F \to F$ is given by the natural morphisms $(\mu_F)_n: \tau F(n)=F(n+1) \to F(n)$ induced by $\overset{n}{\underset{i=1}{\sum}}\mu_i^{n+1} \in Cat\mathcal{L}ie(n+1,n)$. Let $\mathcal{F}_{Lie}^{\mu}$ be the full subcategory of $\mathcal{F}_{Lie}$ of functors such that $\mu_F=0$. By \cite[Theorem 4.16]{P-21}, under the equivalence of categories $ \f_\omega(\gr^{op};\mathbb{K}) \simeq  \f_{\mathcal{L}ie}$, the full subcategory  $ \f^{Out}_\omega(\gr^{op};\mathbb{K})$ of  $ \f_\omega(\gr^{op};\mathbb{K}) $ is equivalent to $\mathcal{F}_{Lie}^{\mu}$.
 
 Let $(-)^\mu: \mathcal{F}_{Lie} \to \mathcal{F}_{Lie}^{\mu}$ be the functor given by $F^\mu:=coker(\mu_F)$. By \cite[Proposition 2.17]{P-21}, $(-)^\mu$ is the left adjoint to the inclusion $\mathcal{F}_{Lie}^{\mu} \hookrightarrow \mathcal{F}_{Lie}$ and so corresponds to the functor $\Omega: \f(\gr^{op}; \mathbb{K}) \to \f^{Out}(\gr^{op}; \mathbb{K})$ via the equivalence of categories $ \f_\omega(\gr^{op};\mathbb{K}) \simeq  \f_{\mathcal{L}ie}$.
\section{On the second Passi functor $\mathcal{P}_2$} \label{Action-S_2}

The contents of this Section will be used in the proof of Proposition \ref{A11}. As the results of this Section are of independent interest, we chose to dedicate a separate Section to them. The reader can skip this section on first reading.

Let $\mathcal{P}_2: \gr \to \ev$ be the functor defined by: $\mathcal{P}_2(F_n)={IF_n}/{(IF_n)^3}.$
The functor $\mathcal{P}_2$ is called the second Passi functor (see \cite{HPV, V_ext, PV}). It is the largest quotient of $\overline{P}_1$ that is polynomial of degree $2$.

The group $\mathfrak{S}_2$ acts on $\mathcal{P}_2$ by the following way: by (\ref{End(P_1)}), the element $[x_1^{-1}]-[1]$ of $IF_1$ corresponds to a natural transformation $\sigma$ in $\mathrm{End}_{\f(\gr; \mathbb{K})}(\overline{P}_1)$.
For $G \in \gr$, $\sigma_G: IG \to IG$ is given explicitly by: $\sigma_G([g]-[1])=[g^{-1}]-[1]$.  Since $\sigma^2=1$, $\sigma$ defines an action of $\mathfrak{S}_2$ on $\overline{P}_1$.  By composition with $ \overline{P}_1  \twoheadrightarrow \mathcal{P}_2$ we obtain 
$$\sigma \in \mathrm{Hom}_{\f(\gr)}( \overline{P}_1,\mathcal{P}_2)\simeq \mathrm{Hom}_{\f_2(\gr)}(\mathcal{P}_2,\mathcal{P}_2 )$$
where the last isomorphism is given by adjunction, so that $\mathfrak{S}_2$ acts on $\mathcal{P}_2$.

In the following lemma we give an explicit description of this action of $\mathfrak{S}_2$ on $\mathcal{P}_2$. 
Recall that  $\A(G) \simeq IG/(IG)^2$, for $G$ an object of $\gr$.

\begin{lem}\label{action-Passi}
The natural transformation $\sigma: \mathcal{P}_2\to  \mathcal{P}_2 $ restricts to a natural transformation
 $\sigma_{|\A^{\otimes 2} }: \A^{\otimes 2} \to  \A^{\otimes 2}$ given by the place permutation. The induced natural transformation
$\overline{\sigma}: \A \to  \A$ is given by $\overline{\sigma}(x)=-x$ for $G$ an object of $\gr$ and $x$ an element in  $\A(G)$.
\end{lem}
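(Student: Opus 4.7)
The plan is to reduce everything to an explicit computation in $IG/(IG)^3$, using the defining formula $\sigma_G([g]-[1]) = [g^{-1}] - [1]$ together with the standard identification of the associated graded of the augmentation filtration. The starting point is to compute $[g^{-1}]-[1]$ modulo $(IG)^3$. Setting $u=[g]-[1]$ and solving $[1]=[g][g^{-1}]=([1]+u)[g^{-1}]$ iteratively, I obtain
\[
[g^{-1}]-[1] \equiv -u+u^2 \pmod{(IG)^3}.
\]
So on $\mathcal{P}_2(G)$, the map $\sigma_G$ sends the class of $[g]-[1]$ to $-([g]-[1]) + ([g]-[1])^2$.

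The statement about $\overline{\sigma}$ is then immediate: modulo $(IG)^2$ the quadratic term $u^2$ vanishes, so $\sigma_G$ descends along $\mathcal{P}_2 \twoheadrightarrow \A$ (which justifies the restriction claim as well) to the map $\bar g \mapsto -\bar g$ on $\A(G) = IG/(IG)^2$. For the restriction to $\A^{\otimes 2} \subset \mathcal{P}_2$, identified with $(IG)^2/(IG)^3$ via multiplication, I would take a typical generator
\[
([g]-[1])([h]-[1]) = ([gh]-[1]) - ([g]-[1]) - ([h]-[1])
\]
and apply $\sigma_G$ using its $\mathbb{K}$-linearity (this is the one point worth emphasising: $\sigma$ is only linear, not multiplicative). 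After regrouping, the result is the product $([h^{-1}]-[1])([g^{-1}]-[1])$ taken in $\mathcal{P}_2(G)$. Substituting the expansion $[x^{-1}]-[1] \equiv -([x]-[1]) + ([x]-[1])^2$ for $x=g,h$ and expanding the product, every term of order $\geq 3$ vanishes modulo $(IG)^3$, leaving exactly $([h]-[1])([g]-[1])$. Under the identification $\A(G)^{\otimes 2} \xrightarrow{\simeq} (IG)^2/(IG)^3$, $\bar g\otimes \bar h \mapsto ([g]-[1])([h]-[1])$, this is precisely the place permutation $\bar g \otimes \bar h \mapsto \bar h \otimes \bar g$, as required.

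There is no real obstacle beyond making sure the bookkeeping between $\sigma$ (originally defined as a natural transformation of $\overline{P}_1$) and its adjunct $\mathcal{P}_2 \to \mathcal{P}_2$ is handled correctly, and that the subfunctor identification $\A^{\otimes 2}\simeq (IG)^2/(IG)^3$ is made explicit (this is standard for free groups, where the multiplication $\A(G)\otimes \A(G) \to (IG)^2/(IG)^3$ is an isomorphism). Everything else is a two-line manipulation in the group algebra modulo $(IG)^3$.
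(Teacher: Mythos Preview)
Your proof is correct and follows essentially the same line as the paper's. Both arguments expand $([g]-[1])([h]-[1])$ as $([gh]-[1])-([g]-[1])-([h]-[1])$, apply $\sigma$ by linearity to obtain $([h^{-1}]-[1])([g^{-1}]-[1])$, and then reduce each factor modulo $(IG)^2$ to recover $([h]-[1])([g]-[1])$; the only cosmetic difference is that you package the last step via the geometric-series expansion $[g^{-1}]-[1]\equiv -u+u^2$, whereas the paper invokes the identity $([y]-[1])([y^{-1}]-[1])=-([y]-[1])-([y^{-1}]-[1])$ to the same effect.
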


\begin{proof}
By \cite{V_ext, DPV}, $\mathcal{P}_2$ is a generator of $\mathrm{Ext}^1_{\f(\gr)}(\A, \A^{\otimes 2})$. The non-split short exact sequence 
\begin{equation}\label{ses-Passi}
\xymatrix{
 0 \ar[r] & \A^{\otimes 2} \ar[r]^-i& \mathcal{P}_2 \ar[r]^-p & \A \ar[r] & 0
  }
  \end{equation}
  gives rise, for $G$ an object of $\gr$, to an exact sequence
   $$\xymatrix{
 0 \ar[r] & IG/(IG)^2 \otimes IG/(IG)^2 \ar[r]^-i & IG/(IG)^3 \ar[r]^-p & IG/(IG)^2  \ar[r] & 0
  }$$
 For $x,y \in G$ we have
  $$\sigma_G\circ i \left( ([x]-[1]+(IG)^2)\otimes ([y]-[1]+(IG)^2) \right)=\sigma_G \left( ([x]-[1]).([y]-[1])+(IG)^3\right)$$
  $$=\sigma_G\left( ([xy]-[1])-([x]-[1])-([y]-[1])+(IG)^3\right)$$
  $$= ([y^{-1}x^{-1}]-[1])-([x^{-1}]-[1])-([y^{-1}]-[1])+(IG)^3$$
  $$=([y^{-1}]-[1]).([x^{-1}]-[1])+(IG)^3$$
  $$=i\left(([y^{-1}]-[1]+(IG)^2)\otimes ([x^{-1}]-[1]+(IG)^2)\right) $$
  We deduce that $\sigma$ induces natural transformations $\sigma_{|\A^{\otimes 2} }: \A^{\otimes 2} \to  \A^{\otimes 2}$ and $\overline{\sigma}: \A \to  \A$.  
  
  Since $([y]-[1])([y^{-1}]-[1])=-([y]-[1])-([y^{-1}]-[1])$ in $(IG)^2$, we have 
  $$([y^{-1}]-[1]+(IG)^2)\otimes ([x^{-1}]-[1]+(IG)^2) =(-([y]-[1])+(IG)^2)\otimes (-([x]-[1])+(IG)^2)$$
  $$=([y]-[1]+(IG)^2)\otimes ([x]-[1]+(IG)^2)$$
  giving the explicit description of $\sigma_{|\A^{\otimes 2} }$. 
  
For that of $\overline{\sigma}$, we have:
$$p \circ \sigma_G ([y]-[1]+(IG)^3)=p([y^{-1}]-[1]+(IG)^3)=[y^{-1}]-[1]+(IG)^2=-([y]-[1])+(IG)^2.$$
\end{proof}

The action of $\mathfrak{S}_2$ on $\mathcal{P}_2$ corresponds to an action of $\mathfrak{S}_2$ on $\mathcal{P}_2^{\#}$.

In order to describe the functor $\mathbf{A}_1(n,-)_{\mathbf{0} }$ in Proposition \ref{A11},
we need the following results on the second Passi functor.

\begin{prop} \label{P2_equ}
We have a natural equivalence in $\f_{\mathcal{L}ie}$:
$$\alpha^{-1}(\mathcal{P}_2^{\#})\simeq Cat\mathcal{L}ie(2,-)$$
hence $\alpha^{-1}(\mathcal{P}_2^{\#})$ is projective.
\end{prop}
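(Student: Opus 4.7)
The plan is to realise both $\alpha^{-1}(\mathcal{P}_2^{\#})$ and $Cat\mathcal{L}ie(2,-)$ as non-split extensions of the atomic $Cat\mathcal{L}ie$-module $\mathbb{K}[\mathfrak{S}_2][2]$ by $\mathbb{K}[1]$, and then to argue that, up to isomorphism, there is a unique such non-split extension in $\f_{\mathcal{L}ie}$.

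First I would note that since $\mathcal{P}_2$ is polynomial of degree $2$ with finite-dimensional values, $\mathcal{P}_2^{\#}$ is polynomial of degree $2$ on $\gr^{op}$, hence analytic, so $\alpha^{-1}(\mathcal{P}_2^{\#})$ is defined. Dualising (\ref{ses-Passi}) and using $(\A^{\otimes 2})^{\#} \simeq (\A^{\#})^{\otimes 2}$ (valid by finite-dimensionality of $\A(F_n)$) yields in $\f_\omega(\gr^{op};\mathbb{K})$
\[
0 \to \A^{\#} \to \mathcal{P}_2^{\#} \to (\A^{\#})^{\otimes 2} \to 0.
\]
Applying the exact functor $\alpha^{-1}$ and invoking (\ref{atomic}) to identify $\alpha^{-1}(\A^{\#}) \simeq \mathbb{K}[1]$ and $\alpha^{-1}((\A^{\#})^{\otimes 2}) \simeq \mathbb{K}[\mathfrak{S}_2][2]$ (the atomic module carrying the regular $\mathfrak{S}_2$-representation), one obtains
\[
0 \to \mathbb{K}[1] \to \alpha^{-1}(\mathcal{P}_2^{\#}) \to \mathbb{K}[\mathfrak{S}_2][2] \to 0 \qquad (\dagger)
\]
in $\f_{\mathcal{L}ie}$. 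Symmetrically, $Cat\mathcal{L}ie(2,n) = 0$ for $n=0$ and $n \geq 3$, $Cat\mathcal{L}ie(2,1) = \mathcal{L}ie(2) \simeq \mathbb{K}$, and $Cat\mathcal{L}ie(2,2) \simeq \mathbb{K}[\mathfrak{S}_2]$ (regular representation via right post-composition), so truncation at degree $1$ gives the analogous
\[
0 \to \mathbb{K}[1] \to Cat\mathcal{L}ie(2,-) \to \mathbb{K}[\mathfrak{S}_2][2] \to 0 \qquad (\ddagger)
\]
with the same extremes as $(\dagger)$.

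To conclude I would directly analyse $\ext^1_{\f_{\mathcal{L}ie}}(\mathbb{K}[\mathfrak{S}_2][2], \mathbb{K}[1])$: any such extension $E$ is concentrated in degrees $1$ and $2$ with $E(1) = \mathbb{K}$ and $E(2) = \mathbb{K}[\mathfrak{S}_2]$, and all operadic structure maps apart from the one $m : E(2) \to E(1)$ induced by $\mu_1^2 \in Cat\mathcal{L}ie(2,1)$ vanish for dimensional reasons. The antisymmetry relation $\mu_1^2 \circ \tau = -\mu_1^2$ in $Cat\mathcal{L}ie$ forces $m(\tau \cdot x) = -m(x)$, so $m$ is determined by the single scalar $m(1) \in \mathbb{K}$; rescaling the subobject $\mathbb{K}[1]$ shows all non-split extensions are isomorphic. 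The sequence $(\ddagger)$ is visibly non-split since its structure map sends $1 \in Cat\mathcal{L}ie(2,2)$ to the nonzero element $\mu_1^2$. The sequence $(\dagger)$ is non-split because otherwise applying $\alpha$ would give $\mathcal{P}_2^{\#} \simeq \A^{\#} \oplus (\A^{\#})^{\otimes 2}$, which upon dualising would split (\ref{ses-Passi}), contradicting the fact that $\mathcal{P}_2$ represents a nonzero class in $\ext^1_{\f(\gr;\mathbb{K})}(\A, \A^{\otimes 2})$. Thus $\alpha^{-1}(\mathcal{P}_2^{\#}) \simeq Cat\mathcal{L}ie(2,-)$, and projectivity of the latter is immediate from the enriched Yoneda lemma.

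The main obstacle is the bookkeeping of the $\mathfrak{S}_2$-module structures through (\ref{atomic}), in particular checking that the sub and quotient in $(\dagger)$ and $(\ddagger)$ really coincide as abstract $Cat\mathcal{L}ie$-modules and not merely degree by degree; once this is in place the Ext analysis is essentially formal.
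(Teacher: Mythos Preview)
Your argument is correct and shares the same skeleton as the paper's: both pass through the short exact sequence obtained by dualising (\ref{ses-Passi}), apply $\alpha^{-1}$, identify the extremes as the atomic modules $\mathbb{K}[1]$ and $\mathbb{K}[\mathfrak{S}_2][2]$, and use non-splitness to conclude. The difference is in the endgame. You classify all extensions of $\mathbb{K}[\mathfrak{S}_2][2]$ by $\mathbb{K}[1]$ by hand (reducing to the single structure map $m$ and a rescaling argument). The paper instead uses the enriched Yoneda lemma: since $Cat\mathcal{L}ie(2,-)$ is a representable, $\hom(Cat\mathcal{L}ie(2,-),\alpha^{-1}(\mathcal{P}_2^{\#}))\simeq \alpha^{-1}(\mathcal{P}_2^{\#})(2)=\mathbb{K}[\mathfrak{S}_2]$, and the element $[\mathrm{Id}]$ gives a natural transformation $\nu$ which is tautologically an isomorphism in degree~$2$; naturality with respect to $\mu_1^2$ together with non-splitness then forces $\nu_1\neq 0$, hence an isomorphism.

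Both routes are short; the paper's Yoneda approach has the advantage of producing the comparison map for free and bypassing entirely the ``bookkeeping of $\mathfrak{S}_2$-structures'' that you flag as the main obstacle, since no explicit identification of the $\mathfrak{S}_2$-action on the extremes is needed beyond knowing $\alpha^{-1}(\mathcal{P}_2^{\#})(2)$ as a set. Your Ext computation, on the other hand, actually proves slightly more, namely that $\ext^1_{\f_{\mathcal{L}ie}}(\mathbb{K}[\mathfrak{S}_2][2],\mathbb{K}[1])$ is one-dimensional.
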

\begin{proof}
The duality functor and $\alpha^{-1}$ being exact functors, we deduce from the non-split exact sequence (\ref{ses-Passi}), the following non-split exact sequence in $\mathcal{F}_{Lie}$:
\begin{equation} \label{sec-PP2}
\xymatrix{
 0 \ar[r] & \alpha^{-1}(\A^\#)  \ar[r] &\alpha^{-1}(\mathcal{P}_2^{\#})  \ar[r] &\alpha^{-1}((\A^{\otimes 2})^\# )\ar[r] & 0.
  }
  \end{equation}
  By (\ref{atomic}), $\alpha^{-1}(\A^\#)$ and $\alpha^{-1}((\A^{\otimes 2})^\# )$ are atomic functors given by: $\alpha^{-1}(\A^\#)=\mathbb{K}[1]$ and $\alpha^{-1}((\A^{\otimes 2})^\# )=\mathbb{K}[\mathfrak{S}_2][2]$. We deduce that $\alpha^{-1}(\mathcal{P}_2^{\#}) $ is non-zero only on $1$ and $2$ and that 
  $$\alpha^{-1}(\mathcal{P}_2^{\#})(1)= \mathbb{K} \quad \text{and} \quad \alpha^{-1}(\mathcal{P}_2^{\#})(2)= \mathbb{K}[\mathfrak{S}_2].$$
   
  The functor $Cat\mathcal{L}ie(2,-): Cat\mathcal{L}ie \to \ev$ is non-zero only on $1$ and $2$ and we have:
 
$$Cat\mathcal{L}ie(2,1) \simeq  \mathbb{K} \quad \text{and}\quad Cat\mathcal{L}ie(2,2) \simeq  \mathbb{K}[\mathfrak{S}_2].$$
By the Yoneda lemma:
$$\text{Hom}(Cat\mathcal{L}ie(2,-), \alpha^{-1}(\mathcal{P}_2^{\#})) \simeq \alpha^{-1}(\mathcal{P}_2^{\#})(2)= \mathbb{K}[\mathfrak{S}_2].$$
Let $\nu: Cat\mathcal{L}ie(2,-) \to \alpha^{-1}(\mathcal{P}_2^{\#})$ be the natural transformation corresponding to $[Id]$ by the previous isomorphism. By naturality of $\nu$, we have:
$$\alpha^{-1}(\mathcal{P}_2^{\#})(\mu_1^2) \circ \nu_2=\nu_1 \circ Cat\mathcal{L}ie(2,-)(\mu_1^2).$$

By construction, $\nu_2$ is an isomorphism and, since the short exact sequence (\ref{sec-PP2}) is non-split, $\alpha^{-1}(\mathcal{P}_2^{\#})(\mu_1^2)\not= 0$. We deduce that $\nu_1 \not= 0$ and so it is an isomorphism. Consequently, $\nu$ is a natural equivalence.
 
\end{proof}

\begin{cor}  \label{P2_inj}
The functor $\mathcal{P}_2^{\#}$ is projective in  the category of polynomial functors on $\gr^{op}$.
\end{cor}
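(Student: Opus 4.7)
The plan is to transport projectivity of a representable $Cat\mathcal{L}ie$-module across Powell's equivalence $\alpha: \f_{\mathcal{L}ie} \xrightarrow{\simeq} \f_\omega(\gr^{op};\mathbb{K})$. First I would invoke Proposition \ref{P2_equ} to identify $\alpha^{-1}(\mathcal{P}_2^{\#})$ with $Cat\mathcal{L}ie(2,-)$. By the enriched Yoneda lemma (already recorded in Section \ref{CatLie} as the statement that the $Cat\mathcal{L}ie(n,-)$ are projective generators of $\f_{\mathcal{L}ie}$), the representable $Cat\mathcal{L}ie(2,-)$ is projective in $\f_{\mathcal{L}ie}$. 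Since $\alpha$ is an exact equivalence of abelian categories it preserves projectives, so $\mathcal{P}_2^{\#}\simeq \alpha(Cat\mathcal{L}ie(2,-))$ is projective in $\f_\omega(\gr^{op};\mathbb{K})$.

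The remaining step is to descend from analytic functors to polynomial functors on $\gr^{op}$. The functor $\mathcal{P}_2^{\#}$ is polynomial of degree $2$, being the dual of the degree-$2$ functor $\mathcal{P}_2$, so it does lie in the target subcategory. The polynomial functors form a full abelian subcategory of $\f_\omega(\gr^{op};\mathbb{K})$ with exact inclusion: a subfunctor or quotient of a functor polynomial of degree $\le d$ remains polynomial of degree $\le d$, so kernels and cokernels computed in $\f_\omega(\gr^{op};\mathbb{K})$ land back in the subcategory. Consequently an epimorphism of polynomial functors is an epimorphism in $\f_\omega(\gr^{op};\mathbb{K})$, and any lifting problem for $\mathcal{P}_2^{\#}$ against such an epimorphism can be solved using projectivity in $\f_\omega(\gr^{op};\mathbb{K})$; fullness of the inclusion guarantees that the lift is itself a morphism of polynomial functors.

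I do not anticipate a genuine obstacle: all of the content is packaged in Proposition \ref{P2_equ}, and the rest is a formal manipulation of projectivity along a full exact inclusion of abelian categories. The only point requiring care is the choice of ambient category, since $\mathcal{P}_2^{\#}$ is not expected to be projective in the whole category $\f(\gr^{op};\mathbb{K})$ — only in the category of polynomial functors, which under $\alpha^{-1}$ corresponds to the full subcategory of $\f_{\mathcal{L}ie}$ of truncated $Cat\mathcal{L}ie$-modules, where $Cat\mathcal{L}ie(2,-)$ is a genuine Yoneda projective.
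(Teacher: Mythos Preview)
Your argument is correct and follows the same route as the paper: invoke Proposition~\ref{P2_equ} to identify $\alpha^{-1}(\mathcal{P}_2^{\#})$ with the projective $Cat\mathcal{L}ie(2,-)$, then transport projectivity along the equivalence $\alpha$. You are actually more careful than the paper, which leaves the descent from $\f_\omega(\gr^{op};\mathbb{K})$ to the full subcategory of polynomial functors implicit; your only slight misstep is the closing remark suggesting that $Cat\mathcal{L}ie(2,-)$ is projective merely among truncated modules---in fact it is a Yoneda projective in all of $\f_{\mathcal{L}ie}$, which is what makes the transport work.
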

\begin{proof}
The functor $\alpha$ is  an equivalence of categories and, by Proposition \ref{P2_equ}, $\alpha^{-1}(\mathcal{P}_2^{\#})$ is projective in $Cat\mathcal{L}ie$.
\end{proof}

\begin{cor} \label{cor-Passi}
For $n \in \nat$, we have a natural equivalence:
$$\alpha^{-1}(\mathcal{P}_2^{\#} \underset{\mathfrak{S}_2}{\otimes} \mathbb{K}[F_n]) \simeq  Cat\mathcal{L}ie(2,-) \underset{\mathfrak{S}_2}{\otimes} \mathbb{K}[F_n],$$
where the action of $\mathfrak{S}_2$ on $\mathbb{K}[F_n]$ is given by taking the inverse in $F_n$: $v\mapsto v^{-1}$ and the action of $\mathfrak{S}_2$ on $\mathcal{P}_2^{\#}$ is described in Lemma \ref{action-Passi}.
\end{cor}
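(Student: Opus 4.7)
The plan is to combine Proposition \ref{P2_equ} with the exactness of $\alpha^{-1}$ and the compatibility of both sides with the $\mathfrak{S}_2$-actions. First I would upgrade the equivalence $\nu : Cat\mathcal{L}ie(2,-) \xrightarrow{\simeq} \alpha^{-1}(\mathcal{P}_2^{\#})$ of Proposition \ref{P2_equ} to an $\mathfrak{S}_2$-equivariant one. The source carries an $\mathfrak{S}_2$-action by precomposition with $\mathfrak{S}_2 \subset Cat\mathcal{L}ie(2,2)$; the target inherits an $\mathfrak{S}_2$-action from the action on $\mathcal{P}_2^{\#}$ introduced in Section \ref{Action-S_2}, transported via $\alpha^{-1}$. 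Evaluating the dual of (\ref{ses-Passi}) after applying $\alpha^{-1}$ at level $2$, the subfunctor $\alpha^{-1}(\A^{\#}) = \mathbb{K}[1]$ vanishes, hence $\alpha^{-1}(\mathcal{P}_2^{\#})(2) = \alpha^{-1}((\A^{\otimes 2})^{\#})(2) = \mathbb{K}[\mathfrak{S}_2]$. By Lemma \ref{action-Passi}, the $\mathfrak{S}_2$-action on $\mathcal{P}_2$ restricted to $\A^{\otimes 2}$ is place permutation, so the transported action at level $2$ is the regular representation. Similarly, the precomposition action on $Cat\mathcal{L}ie(2,2) = \mathbb{K}[\mathfrak{S}_2]$ is the regular representation (left and right multiplication agree since $\mathfrak{S}_2$ is abelian). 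Choosing an $\mathfrak{S}_2$-equivariant generator of $\alpha^{-1}(\mathcal{P}_2^{\#})(2)$ and extending by the Yoneda lemma yields a natural transformation $\tilde{\nu}$ which is $\mathfrak{S}_2$-equivariant by construction; the same nontriviality argument as in the proof of Proposition \ref{P2_equ} (using the non-splitting of (\ref{sec-PP2}) to force nonzero action on level $1$) shows $\tilde{\nu}$ is an isomorphism.

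Second, I would use that $\alpha^{-1}$, as part of an equivalence of abelian categories, is exact and preserves arbitrary direct sums. For any $\mathfrak{S}_2$-equivariant analytic functor $G$ and any $\mathbb{K}[\mathfrak{S}_2]$-module $V$, the coinvariants $G \otimes_{\mathfrak{S}_2} V$ are expressible as the cokernel of a map between direct sums of copies of $G$ indexed by bases of $V$ and $\mathbb{K}[\mathfrak{S}_2] \otimes V$. Consequently $\alpha^{-1}$ commutes with $(-) \otimes_{\mathfrak{S}_2} V$, giving
$$\alpha^{-1}\bigl(\mathcal{P}_2^{\#} \underset{\mathfrak{S}_2}{\otimes} \mathbb{K}[F_n]\bigr) \simeq \alpha^{-1}(\mathcal{P}_2^{\#}) \underset{\mathfrak{S}_2}{\otimes} \mathbb{K}[F_n].$$
Substituting the $\mathfrak{S}_2$-equivariant isomorphism $\tilde{\nu}$ on the right-hand side produces the claimed natural equivalence.

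I expect the $\mathfrak{S}_2$-equivariance in the first step to be the main technical point. The crucial input is Lemma \ref{action-Passi}, which pins down the action on the subfunctor $\A^{\otimes 2}$ of $\mathcal{P}_2$ as the place permutation and thus, after dualisation and application of $\alpha^{-1}$, identifies both $\mathbb{K}[\mathfrak{S}_2]$-module structures at level $2$ with the regular representation; everything else is formal.
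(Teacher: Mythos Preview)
The paper states this corollary with no proof at all, leaving it as an immediate consequence of Proposition~\ref{P2_equ}. Your proposal correctly supplies the two ingredients that the paper leaves implicit: that the equivalence $\nu$ of Proposition~\ref{P2_equ} is $\mathfrak{S}_2$-equivariant (for the precomposition action on $Cat\mathcal{L}ie(2,-)$ and the action of Section~\ref{Action-S_2} on $\mathcal{P}_2^{\#}$), and that the equivalence $\alpha^{-1}$ commutes with $(-)\otimes_{\mathfrak{S}_2}\mathbb{K}[F_n]$. Both are handled properly; in particular, your use of Lemma~\ref{action-Passi} to identify the transported action at level~$2$ with the regular $\mathfrak{S}_2$-representation is exactly the point that needs checking, and your observation that $\mathfrak{S}_2$ is abelian (so left and right regular actions coincide) disposes of the only possible ambiguity. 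This is the argument the paper intends.
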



\section{Habiro-Massuyeau's category}

\subsection{Definition}  \label{rappels-HM} 
In \cite[Section 4.1]{HM}, Habiro and Massuyeau consider Jacobi diagrams on a $1$-manifold coloured by elements of a group (see also \cite{GL, ST}). In order to avoid the confusion with the fact that we will also consider Jacobi diagrams where the univalent vertices are "coloured" by a set, we replace the terminology used by Habiro and Massuyeau by \textit{beaded Jacobi diagrams} (following, for example, \cite{GR}).

For $d\geq 0$, let $X_d$ be the oriented $1$-manifold consisting of $d$ arc components. Recall that a \textit{Jacobi diagram} $D$ on $X_d$ is a uni-trivalent graph such that each trivalent vertex is oriented, the set of univalent vertices is embedded into the interior of $X_d$ and  each connected component of $D$ contains at least one univalent vertex. 
When a Jacobi diagram $D$ on $X_d$ is drawn in the plane, we draw the $1$-manifold $X_d$ with solid lines, the Jacobi diagram part $D$ with dashed lines and we assume counterclockwise orientation for the trivalent vertices of $D$.

For $G$ a group, a \textit{$G$-beaded Jacobi diagram on $X_d$} is a Jacobi diagram $D$ on $X_d$ whose graph edges are oriented and a $G$-valued function on a finite subset of $(\text{Int}(X_d) \cup D) \setminus \text{Vert}(D)$. This function labels the oriented edges of $D$ and the arcs of $X_d$, by elements in $G$.
In figures, the labels are encoded by "beads" coloured with elements of $G$.

Two  $G$-beaded Jacobi diagrams on $X_d$ are said to be \textit{equivalent} if they are related by the following sequence of local moves (see \cite[(4.1) and p.618]{HM}), where $w,x \in G$:

 \[
	\vcenter{\hbox{\begin{tikzpicture}[baseline=1.8ex,scale=0.7]
	\draw[-To,>=latex]  (0,0) to (2,0);
		\draw (2.1,0) node[right] {$\sim$};
	\draw[-To,>=latex]  (3,0) to(5,0);
		\draw[fill=white] (0.5,0) circle (3pt);
		\draw (0.5,0) node[above] {$\scriptstyle{w}$};
			\draw[fill=white] (1.5,0) circle (3pt);
		\draw (1.5,0) node[above] {$\scriptstyle{x}$};
			\draw[fill=white] (4,0) circle (3pt);
	\draw (4,0) node[above] {$\scriptstyle{wx}$};
	
		\draw (5.5,0) node[right] {;};	

	\draw[-To,>=latex]  (7,0) to (9,0);
		\draw (9.1,0) node[right] {$\sim$};
	\draw[-To,>=latex]  (10,0) to(12,0);
		\draw[fill=white] (8,0) circle (3pt);
		\draw (8,0) node[above] {$\scriptstyle{1}$};
		
			\draw (12.5,0) node[right] {;};	
			
			\draw[thick, dotted][-To,>=latex]  (0,-4) to (2,-4);
		\draw (2.1,-4) node[right] {$\sim$};
	\draw[thick, dotted][-To,>=latex]  (5,-4) to(3,-4);
		\draw[fill=white] (1,-4) circle (3pt);
		\draw (1,-4) node[above] {$\scriptstyle{w}$};
					\draw[fill=white] (4,-4) circle (3pt);
	\draw (4,-4) node[above] {$\scriptstyle{w^{-1}}$};

\draw[thick, dotted][-To,>=latex]  (0,-2) to (2,-2);
		\draw (2.1,-2) node[right] {$\sim$};
	\draw[thick, dotted][-To,>=latex]  (3,-2) to(5,-2);
		\draw[fill=white] (0.5,-2) circle (3pt);
		\draw (0.5,-2) node[above] {$\scriptstyle{w}$};
			\draw[fill=white] (1.5,-2) circle (3pt);
		\draw (1.5,-2) node[above] {$\scriptstyle{x}$};
			\draw[fill=white] (4,-2) circle (3pt);
	\draw (4,-2) node[above] {$\scriptstyle{wx}$};

			\draw (5.5,-2) node[right] {;};	

	\draw[thick, dotted][-To,>=latex]  (7,-2) to (9,-2);
		\draw (9.1,-2) node[right] {$\sim$};
	\draw[thick, dotted][-To,>=latex]  (10,-2) to(12,-2);
		\draw[fill=white] (8,-2) circle (3pt);
		\draw (8,-2) node[above] {$\scriptstyle{1}$};
		
			\draw (12.5,-2) node[right] {;};	
			
	\draw[-To,>=latex]  (14,0) to (16,0);
		\draw[thick, dotted][-To,>=latex]  (15,0) to (16,1);
		\draw (16.1,0) node[right] {$\sim$};
	\draw[-To,>=latex]  (17,0) to(19,0);
	\draw[thick, dotted][-To,>=latex]  (18,0) to (19,1);
		\draw[fill=white] (14.5,0) circle (3pt);
		\draw (14.5,0) node[above] {$\scriptstyle{w}$};
					\draw[fill=white] (18.5,0) circle (3pt);
		\draw (18.5,0) node[below] {$\scriptstyle{w}$};
			\draw[fill=white] (18.5,.5) circle (3pt);
		\draw (18.5,.5) node[above] {$\scriptstyle{w}$};
		
		\draw[thick, dotted][-To,>=latex]  (14,-2) to (15,-2);
			\draw[thick, dotted][-To,>=latex]  (15,-2) to (16,-2);
		\draw[thick, dotted][-To,>=latex]  (15,-2) to (16,-1);
		\draw (16.1,-2) node[right] {$\sim$};
	\draw[thick, dotted][-To,>=latex]  (17,-2) to(18,-2);
	\draw[thick, dotted][-To,>=latex]  (18,-2) to(19,-2);
	\draw[thick, dotted][-To,>=latex]  (18,-2) to (19,-1);
		\draw[fill=white] (14.5,-2) circle (3pt);
		\draw (14.5,-2) node[above] {$\scriptstyle{w}$};
					\draw[fill=white] (18.5,-2) circle (3pt);
		\draw (18.5,-2) node[below] {$\scriptstyle{w}$};
			\draw[fill=white] (18.5,-1.5) circle (3pt);
		\draw (18.5,-1.5) node[above] {$\scriptstyle{w}$};

		\end{tikzpicture}}}
		\]

For example, these two  $G$-beaded Jacobi diagrams on $X_2$, where $w_1, w_2, w_3 \in G$, are equivalent:

 \[
	\vcenter{\hbox{\begin{tikzpicture}[baseline=1.8ex,scale=0.5]
	
	\draw[->,>=latex]  (4,0) to[bend right=45] (0,0);
	\draw[->,>=latex]  (10,0) to[bend right=45] (6,0);
\draw[thick, dotted][-To,>=latex]   (5,2) to  (2.5,1.17);
				\draw[thick, dotted]  (2.5,1.17) to (2, 1);
			\draw[thick, dotted] [-To,>=latex]    (8,1) to (7, 1.33);
					\draw[thick, dotted] (7, 1.33) to (5, 2);
									\draw[thick, dotted]  (6.3,0.95) to (5, 2);
						\draw[thick, dotted][-To,>=latex]   (6.8,.6) to (6.3,0.95);

		\draw[fill=white] (.8,.6) circle (3pt);
		\draw (.8,.6) node[above] {$\scriptstyle{w_1}$};
		\draw[fill=white] (3.2,.6) circle (3pt);
		\draw (3.3,.6) node[below] {$\scriptstyle{w_3}$};
		\draw[fill=white] (3.1,1.3) circle (3pt);
		\draw (3.1,1.3) node[above] {$\scriptstyle{w_3}$};
		\draw[fill=white] (4,1.65) circle (3pt);
		\draw (4,1.7) node[above] {$\scriptstyle{w_2}$};
		\draw[fill=white] (9.2,.6) circle (3pt);
		\draw (9.2,.6) node[above] {$\scriptstyle{w_2}$};
			\draw[fill=white] (6.4,1.55) circle (3pt);
		\draw (8, 1.5) node[above] {$\scriptstyle{w_1(w_2)^{-1}}$};
	\draw[fill=white] (7.3,0.8) circle (3pt);
		\draw (7.6, 0.8) node[below] {$\scriptstyle{w_1}$};
		\draw[fill=black] (5,2) circle (1.5pt);
		\end{tikzpicture}}}
\quad \sim \quad 
\vcenter{\hbox{\begin{tikzpicture}[baseline=1.8ex,scale=0.5]
	
	\draw[->,>=latex]  (4,0) to[bend right=45] (0,0);
	\draw[->,>=latex]  (10,0) to[bend right=45] (6,0);
		\draw[thick, dotted][-To,>=latex]   (5,2) to  (2.5,1.17);
				\draw[thick, dotted]  (2.5,1.17) to (2, 1);
			\draw[thick, dotted] [-To,>=latex]    (8,1) to (7, 1.33);
					\draw[thick, dotted] (7, 1.33) to (5, 2);
						\draw[thick, dotted]  (6.3,0.95) to (5, 2);
						\draw[thick, dotted][-To,>=latex]   (6.8,.6) to (6.3,0.95);
		\draw[fill=white] (.8,.6) circle (3pt);
		\draw (.8,.6) node[above] {$\scriptstyle{w_1w_3}$};
			\draw[fill=white] (4,1.65) circle (3pt);
		\draw (4,1.7) node[above] {$\scriptstyle{w_2w_1}$};
		\draw[fill=white] (5.5,1.5) circle (3pt);
		\draw (5.5, 1.5) node[below] {$\scriptstyle{w_1}^{-1}$};
	\draw[fill=white] (7.3,0.8) circle (3pt);
		\draw (7.6, 0.8) node[below] {$\scriptstyle{w_1w_2}$};
		\draw[fill=black] (5,2) circle (1.5pt);

		\end{tikzpicture}}}
		\]

In particular, each $G$-beaded Jacobi diagram on $X_d$ is equivalent to a $G$-beaded Jacobi diagram of the form:
\[
	\vcenter{\hbox{\begin{tikzpicture}[baseline=1.8ex,scale=0.4]
	
	\draw[->,>=latex]  (4,0) to[bend right=45] (0,0);
	\draw[->,>=latex]  (10,0) to[bend right=45] (6,0);
	\draw[->,>=latex]  (18,0) to[bend right=45] (14,0);
		\draw[thick, dotted][->,>=latex]   (1.5,0.7) to (1.5, 4);
		\draw (2,2) node[below] {$\scriptstyle{\ldots}$};
		\draw[thick, dotted][->,>=latex]   (2.5,0.7) to (2.5, 4);
		\draw[thick, dotted][->,>=latex]   (7.5,0.7) to (7.5, 4);
		\draw (8,2) node[below] {$\scriptstyle{\ldots}$};
		\draw[thick, dotted][->,>=latex]   (8.5,0.7) to (8.5, 4);
		\draw[thick, dotted][->,>=latex]   (15.5,0.7) to (15.5, 4);
		\draw (16,2) node[below] {$\scriptstyle{\ldots}$};
		\draw[thick, dotted][->,>=latex]   (16.5,0.7) to (16.5, 4);
	\draw (2,0) node[below] {$\scriptstyle{1}$};
	\draw (8,0) node[below] {$\scriptstyle{2}$};
	\draw (12,0) node[below] {$\scriptstyle{\ldots}$};
	\draw (16,0) node[below] {$\scriptstyle{d}$};	
		\draw[fill=white] (.8,.6) circle (4pt);
		\draw (.8,.6) node[above] {$\scriptstyle{w_1}$};
		\draw[fill=white] (6.8,.6) circle (4pt);
		\draw (6.8,.6) node[above] {$\scriptstyle{w_2}$};
		\draw[fill=white] (14.8,.6) circle (4pt);
		\draw (14.8,.6) node[above] {$\scriptstyle{w_{d}}$};
			\end{tikzpicture}}}
		\]
where $w_1, \ldots, w_{d} \in G$ and where we can have beads on the Jacobi diagram represented by the dashed part in the figure.

In \cite[Section 4.2]{HM} Habiro and Massuyeau define the linear category $\mathbf{A}$ of Jacobi diagrams in handlebodies. This category has $\mathbb{N}$ as objects and for $n,m \in \mathbb{N}$, $\mathbf{A}(n,m)$ is the vector space generated by the equivalence classes of $F_n$-beaded Jacobi diagrams on $X_m$ modulo the STU relation. The composition in the category $\mathbf{A}$ is quite complicated and we refer the reader to \cite[Section 4.2]{HM} for its definition.

By \cite[Section 4.3]{HM}, the linear category $\mathbf{A}$ admits a symmetric monoidal structure given on objects by the addition of integers. We denote this monoidal structure by $\odot$.

\subsection{Two gradings and sub-(semi)-categories} 

In \cite[Section 4.4]{HM} the authors define two gradings  on the morphisms of  $\mathbf{A}$. 
The first one is a $\mathbb{N}$-grading given by the degree of the Jacobi diagram: for $m, n \in \mathbb{N}$, $\mathbf{A}(n,m)$ can be decomposed as a direct sum with respect to the degree $d$ of the Jacobi diagrams
\begin{equation}\label{decomposition-degree}
\mathbf{A}(n,m) \simeq \underset{d \in \mathbb{N}}{\bigoplus} \mathbf{A}_d(n,m).
\end{equation}
This grading is compatible with the composition in the category $\mathbf{A}$ giving maps 
\begin{equation}\label{degree-composition}
\circ: \mathbf{A}_{d'}(m,n') \times \mathbf{A}_d(n,m)\to \mathbf{A}_{d+d'}(n,n') 
\end{equation}
for $d,d' \in \nat$.

The second grading is  a $\gr^{op}$-grading: the \textit{homotopy class} of an $(m,n)$-Jacobi diagram $D$ on $X_n$ is the homomorphism $h(D): F_n \to F_m$ that maps each generator $x_j$ to the product of the beads along the $j^{th}$-oriented component of $X_n$. We have
\begin{equation}\label{decomposition-homotopy}
\mathbf{A}(m,n)=\underset{f \in \gr^{op}(m,n)}{\bigoplus} \mathbf{A}(m,n)_f.
\end{equation}
Note that the identity morphism in $\mathbf{A}(n,n)$ is in the homotopy class of the identity homomorphism $F_n \to F_n$. 
This grading is compatible with the composition in the category $\mathbf{A}$.
\begin{equation}\label{compo-grad-2}
\circ:  \mathbf{A}(m,n')_{g} \times \mathbf{A}(n,m)_f \to \mathbf{A}(n,n')_{g\circ f}
\end{equation}
for $f \in \gr^{op}(n,m)=\gr(m,n)$ and $g \in \gr^{op}(m,n')=\gr(n',m)$.

Using these gradings we can consider the following subcategory and sub-semicategory of $\mathbf{A}$. Recall that a semicategory is defined as a category without the condition on the existence of identity morphisms (see \cite[Section 4]{Mitchell}).

Taking degree $d=0$, by (\ref{degree-composition}) we have maps
$$\circ:  \mathbf{A}_{0}(m,n')  \times  \mathbf{A}_0(n,m)  \to \mathbf{A}_{0}(n,n').$$

Hence $\mathbf{A}$ has a subcategory, denoted by $\mathbf{A}_0$, such that $Obj(\mathbf{A}_0)=Obj(\mathbf{A})$ and the morphisms in $\mathbf{A}_0$ are given by Jacobi diagrams of degree $0$. By \cite[p. 630]{HM} we have an isomorphism of linear categories
\begin{equation}\label{eq-cat}
h: \mathbf{A}_0 \xrightarrow{\simeq} \mathbb{K}\gr^{op}.
\end{equation}
This isomorphism comes from the fact that $\mathbf{A}_0(n,m)$ is generated by $F_n$-beaded empty Jacobi diagrams on $X_m$. So we have only beads on the arcs of $X_m$. Such a choice of beads corresponds to a homomorphism: $F_n \to F_m$ sending $x_i$ to the bead on the $i$-th arc of $X_m$.

Via the isomorphism $h$ given in (\ref{eq-cat}), the generators $(m_1,m_2,m_3,m_4,m_5)$ of $\gr$ recalled in Section \ref{gr-op} correspond to the morphisms $(\eta, \mu, \epsilon, S, \Delta)$ given in \cite[(5.28)]{HM}.

Recall that $\mathbf{0} \in \gr^{op}(n,m)=\gr(m,n)$ is the composition $m \to 0 \to n$ in $\gr$. By Section \ref{rappels-HM}, a $F_n$-beaded Jacobi diagram $D$ on $X_m$ in the homotopy class of $\mathbf{0}$, is represented by a $F_n$-beaded Jacobi diagram without beads on $X_m$ (but there may be beads on $D$). 

By (\ref{compo-grad-2}) we have maps
$$\circ: \mathbf{A}(m,n')_{\mathbf{0}} \times \mathbf{A}(n,m)_{\mathbf{0}} \to \mathbf{A}(n,n')_{\mathbf{0}}.$$

We deduce that $\mathbf{A}$ has a sub-semicategory, denoted by $\mathbf{A}(-,-)_{\mathbf{0}}$, such that $Obj(\mathbf{A}(-,-)_{\mathbf{0}})=Obj(\mathbf{A})$ and the morphisms in $\mathbf{A}(-,-)_{\mathbf{0}}$ are given by beaded Jacobi diagrams in the homotopy class of $\mathbf{0}$. 


\section{Projective generators on Habiro-Massuyeau's category}
Let $\mathbf{A}\text{-Mod}$ be the category of $\mathbb{K}$-linear  functors from $\mathbf{A}$ to the category $\ev$.
For $n \in \mathbb{N}$, $\mathbf{A}(n,-):\mathbf{A}\to\ev$ is a linear functor. By the enriched Yoneda lemma, for $F:\mathbf{A}\to \ev$ a $\mathbb{K}$-linear functor, we have an isomorphism:
$$\text{Hom}_{\mathbf{A}\text{-Mod}}(\mathbf{A}(n,-), F) \simeq F(n).$$
We deduce that the functors $\mathbf{A}(n,-)$, for $n \geq 0$, are projective generators of $\mathbf{A}\text{-Mod}$.

Note that, for $d \in \mathbb{N}$,  $\mathbf{A}_d(n,-)$ do not define subfunctors of $\mathbf{A}(n,-)$  since the degree of Jacobi diagrams is not preserved by composition. 
However, for $n \in \mathbb{N}$, by restriction to $\mathbf{A}_0$, we have linear functors $\mathbf{A}(n,-): \mathbf{A}_0 {\simeq} \mathbb{K}\gr^{op} \to \ev$ and so functors
$$\mathbf{A}(n,-): \gr^{op} \to \ev.$$
Then, by (\ref{degree-composition}), the grading  by the degree of the Jacobi diagrams defines subfunctors 
$$\mathbf{A}_d(n,-): \gr^{op} \to \ev.$$

\begin{rem}
In \cite{Katada, KatadaII}, Katada studies the functor $\mathbf{A}(0,-): \gr^{op} \to \ev$ and, for $d\in \nat$, its subfunctors $\mathbf{A}_d(0,-)$ denoted by $A_d$ in  \cite{Katada, KatadaII}. In \cite{Katada}, she proves that $\mathbf{A}_d(0,-)$ is a polynomial functor of degree $2d$ which is an outer functor. She also gives a complete description of the functor $\mathbf{A}_1(0,-)$ and the more complicated case of the functor $\mathbf{A}_2(0,-)$. In \cite[Theorem 10.1]{KatadaII}, she gives a direct decomposition of the functor $\mathbf{A}_d(0,-)$ for $d\geq 1$ (see also Proposition \ref{direct-decomposition} for another proof) and obtains in \cite[Proposition 10.2]{KatadaII} that this is an indecomposable decomposition.
\end{rem}

\begin{rem}
More generally, $\mathbf{A}(-,-):\mathbf{A}^{op} \times \mathbf{A}\to\ev$ is a linear functor and, by restriction, we have a functor $\mathbf{A}(-,-):\gr \times \gr^{op} \to\ev$. The study of these bifunctors will be done elsewhere.
\end{rem}
\subsection{Generalities on the functors $\mathbf{A}(n, -)$ and $\mathbf{A}_d(n, -)$}

The first result of this section shows that the functors $\mathbf{A}(n, -)$  are connected to each other by injective natural transformations.
Let $\epsilon \in \mathbf{A}_0(1,0)=\mathbf{A}(1,0)$ be the morphism corresponding, via the isomorphism $h$ of (\ref{eq-cat}) to the morphism $m_3$ given in Section  \ref{gr-op}. For $n\geq 1$ we have $\mathbf{A}(n,0) \simeq \mathbb{K}[\epsilon^{\odot n}]\simeq \mathbb{K}$ and $\mathbf{A}(0,0)=\mathbf{A}_0(0,0)=\mathbb{K}$. So $0$ is a terminal object in the $\mathbb{K}$-linear category $\mathbf{A}$. We deduce that, for $n\geq 1$, the functors $\mathbf{A}(n,-): \gr^{op} \to \ev$ are not reduced. We denote by $\overline{\mathbf{A}(n,-)}$ the reduced subfunctor of $\mathbf{A}(n,-)$. In particular we have: $\mathbf{A}(n,-) \simeq \mathbb{K} \oplus \overline{\mathbf{A}(n,-)}$. Note that $0$ is far from being an initial object in $\mathbf{A}$ and that $\mathbf{A}(0,-)$ is reduced.
\begin{lem} \label{subfunctor}
For $d,n \in \mathbb{N}$, the precomposition with $Id_n \odot \epsilon \in \mathbf{A}_0(n+1,n)$  gives injective natural transformations
$$ \mathbf{A}(n, -) \hookrightarrow  \mathbf{A}(n+1,-); \qquad \mathbf{A}_d(n, -) \hookrightarrow  \mathbf{A}_d(n+1,-).$$

\end{lem}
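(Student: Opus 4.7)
The plan is to realise the stated map as precomposition in $\mathbf{A}$ with the degree-$0$ morphism $\mathrm{Id}_n \odot \epsilon \in \mathbf{A}_0(n+1,n)$, and to produce an explicit left inverse coming from a retraction of free groups. Setting $T(f) := f \circ (\mathrm{Id}_n \odot \epsilon)$ for $f \in \mathbf{A}(n,m)$, naturality of $T$ with respect to a morphism $\psi \in \mathbf{A}_0(m,m')$ is immediate from associativity of composition in $\mathbf{A}$, so $T$ assembles into a natural transformation $\mathbf{A}(n,-) \to \mathbf{A}(n+1,-)$ on $\gr^{op} \simeq \mathbf{A}_0$. Since $\mathrm{Id}_n \odot \epsilon$ has degree $0$, (\ref{degree-composition}) shows that $T$ restricts to a natural transformation $\mathbf{A}_d(n,-) \to \mathbf{A}_d(n+1,-)$ for every $d$.

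For injectivity, via the isomorphism $h$ of (\ref{eq-cat}), $\mathrm{Id}_n \odot \epsilon$ corresponds to the inclusion $\iota : F_n \hookrightarrow F_{n+1}$ with $\iota(x_i)=x_i$. Let $s \in \mathbf{A}_0(n,n+1)$ be the morphism corresponding under $h$ to the retraction $\sigma : F_{n+1} \to F_n$ defined by $\sigma(x_i)=x_i$ for $i \leq n$ and $\sigma(x_{n+1})=1$, so that $\sigma \circ \iota = \mathrm{Id}_{F_n}$ in $\gr$. Translating this identity through the contravariant equivalence $h : \mathbf{A}_0 \xrightarrow{\simeq} \mathbb{K}\gr^{op}$ yields $(\mathrm{Id}_n \odot \epsilon) \circ s = \mathrm{Id}_n$ in $\mathbf{A}_0(n,n)$. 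Precomposition with $s$ then provides a left inverse of $T$ at every object $m$, since
$$T(f) \circ s \,=\, \bigl(f \circ (\mathrm{Id}_n \odot \epsilon)\bigr) \circ s \,=\, f \circ \bigl((\mathrm{Id}_n \odot \epsilon) \circ s\bigr) \,=\, f$$
for all $f \in \mathbf{A}(n,m)$. Because $s$ itself lies in $\mathbf{A}_0$, the same argument restricts to each degree-$d$ subfunctor by a second application of (\ref{degree-composition}).

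No serious obstacle is expected; the only subtlety is keeping the direction conventions straight when passing between composition in $\mathbf{A}_0$, in $\gr^{op}$, and in $\gr$, so that one is confident that a retraction of $\iota$ on the $\gr$ side genuinely produces a left inverse to $T$ on the $\mathbf{A}_0$ side via the contravariant equivalence $h$.
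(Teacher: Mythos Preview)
Your argument is correct and is in fact more self-contained than the paper's own proof. The paper establishes injectivity by citing \cite[Lemma~4.5 and Remark~4.6]{HM}, which prove directly that the relevant cabling maps in $\mathbf{A}$ are injective. You instead observe that $\mathrm{Id}_n \odot \epsilon$ is a \emph{split} monomorphism in $\mathbf{A}_0$, because under $h$ it corresponds to the inclusion $F_n \hookrightarrow F_{n+1}$, which admits the retraction $\sigma$; precomposition with any split monomorphism is automatically split injective on hom-sets. This is a cleaner categorical argument that avoids appeal to the internal structure of $\mathbf{A}$ and yields the slightly stronger conclusion that the natural transformation is pointwise split. The degree-preservation step via (\ref{degree-composition}) is the same in both proofs.

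One small terminological point: calling $h$ a ``contravariant equivalence'' is slightly loose, since $h : \mathbf{A}_0 \xrightarrow{\simeq} \mathbb{K}\gr^{op}$ is a covariant isomorphism; the contravariance is absorbed into the target $\gr^{op}$. Your computation of $(\mathrm{Id}_n \odot \epsilon) \circ s = \mathrm{Id}_n$ is nonetheless correct once the directions are tracked as you do.
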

\begin{proof}
 For $n=0$, the injectivity follows from  \cite[Lemma 4.5]{HM} and the general case is a consequence of the generalization of this Lemma given in \cite[Remark 4.6]{HM}. By (\ref{degree-composition}) the composition preserves the degree of the Jacobi diagram.
\end{proof}
 
In  \cite[Proposition 8.1]{Katada}, Katada proves that the functor $\mathbf{A}_d(0, -)$ is polynomial of degree $2d$.
The following Proposition shows that the polynomiality of the functors $\mathbf{A}_d(n, -)$ is an infrequent phenomenon.
\begin{prop} \label{poly-A_d(n)}
For $d,n \in \mathbb{N}$, the functor $\mathbf{A}_d(n, -):\gr^{op} \to \ev$ is polynomial iff $n=0$. 
\end{prop}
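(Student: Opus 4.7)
The case $n=0$ is \cite[Proposition 8.1]{Katada}, so I concentrate on the converse: assuming $n\geq 1$, I would show that $\mathbf{A}_d(n,-)$ is not polynomial of any degree. Applying Lemma \ref{subfunctor} iteratively furnishes an injective natural transformation $\mathbf{A}_d(1,-)\hookrightarrow\mathbf{A}_d(n,-)$; since cross-effects preserve monomorphisms, a subfunctor of a polynomial functor remains polynomial, and it is enough to prove that $\mathbf{A}_d(1,-)$ is not polynomial.

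For this I would bound the cross-effect $\widetilde{cr}_m(\mathbf{A}_d(1,-))(1,\ldots,1)$ from below via its cokernel description, namely as the cokernel of the sum of the maps $(r^m_{\hat k})_*:\mathbf{A}_d(1,m-1)\to\mathbf{A}_d(1,m)$ for $k=1,\ldots,m$. The decisive tool is the $\gr^{op}$-grading (\ref{decomposition-homotopy}), which for $n=1$ reads
$$\mathbf{A}_d(1,m)=\bigoplus_{h\in\gr(m,1)}\mathbf{A}_d(1,m)_h=\bigoplus_{h\in\mathbb{Z}^m}\mathbf{A}_d(1,m)_h.$$
Since the grading is compatible with composition by (\ref{compo-grad-2}) and $r^m_{\hat k}(x_k)=1$, the image of $(r^m_{\hat k})_*$ is contained in $\bigoplus_{h\,:\,h_k=0}\mathbf{A}_d(1,m)_h$. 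Consequently the cross-effect surjects onto $\bigoplus_{h\in(\mathbb{Z}\setminus\{0\})^m}\mathbf{A}_d(1,m)_h$, and it suffices to exhibit, for arbitrarily large $m$, a non-zero element of $\mathbf{A}_d(1,m)_h$ for some $h\in(\mathbb{Z}\setminus\{0\})^m$.

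To produce such an element I would take $m\geq 2d$, any $h=(h_1,\ldots,h_m)\in(\mathbb{Z}\setminus\{0\})^m$, and consider the class of the beaded Jacobi diagram consisting of $d$ pairwise disjoint struts (or the empty diagram when $d=0$) whose $2d$ univalent vertices lie on the first $2d$ arcs of $X_m$, equipped with a single bead $x^{h_i}$ on the $i$-th arc so that the homotopy class equals exactly $h$. Non-vanishing follows by applying the $\mathbb{K}$-linear map $\mathbf{A}_d(1,m)\to\mathbf{A}_d(0,m)$ induced by the trivialization $F_1\to\{1\}$ of beads (well-defined because the STU, AS and IHX relations do not involve bead labels): the element is sent to the underlying un-beaded Jacobi diagram in $\mathbf{A}_d(0,m)$, which is a non-zero strut configuration.

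The only delicate point I anticipate is the non-vanishing in $\mathbf{A}_d(0,m)$; this can be extracted from Katada's explicit description of $\mathbf{A}_d(0,-)$ in \cite{Katada}, or checked directly for disjoint struts on distinct arcs. Once secured, the argument produces non-trivial cross-effects in every rank $m\geq 2d$, so $\mathbf{A}_d(1,-)$ is not polynomial and hence neither is $\mathbf{A}_d(n,-)$ for any $n\geq 1$.
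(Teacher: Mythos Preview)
Your argument is correct and shares the paper's core idea—exhibit, for every large $m$, a beaded diagram that survives in the cokernel description of $\widetilde{cr}_m$—but the execution differs in a couple of useful ways. The paper splits off $d=0$ separately (via $\mathbf{A}_0(n,-)\simeq P_n$), works directly with general $n\geq 1$, and for $d\geq 1$ places a single non-trivial bead on the last arc of $X_k$, asserting without further detail that the resulting diagram is non-zero in the cokernel. You instead reduce to $n=1$ via Lemma \ref{subfunctor}, treat all $d\geq 0$ uniformly, and put non-trivial beads on \emph{every} arc; because you invoke the $\gr^{op}$-grading \eqref{decomposition-homotopy} explicitly (the image of each $(r^m_{\hat k})_*$ lands in homotopy classes with $k$-th coordinate trivial), the survival of your element in the cokernel is immediate. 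Your bead-trivialisation map $\mathbf{A}_d(1,m)\to\mathbf{A}_d(0,m)$ to certify non-vanishing of the element itself is also a step the paper leaves implicit. Both arguments work, but your use of the grading makes the cokernel computation entirely transparent.
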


The proof of this proposition is based on the following lemmas.
\begin{lem} \label{lm-d=0}
For $n \in \mathbb{N}$, we have an isomorphism of functors:
$\mathbf{A}_0(n,-)\simeq P_n.$ In particular, $\mathbf{A}_0(0,-)\simeq \mathbb{K}$ is polynomial of degree $0$ and, for $n\geq 1$, $\mathbf{A}_0(n,-)$ is neither polynomial nor analytic.
\end{lem}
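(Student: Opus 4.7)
The isomorphism $\mathbf{A}_0(n,-) \simeq P_n$, with $P_n := \mathbb{K}[\gr^{op}(n,-)]$ the Yoneda projective in $\f(\gr^{op};\mathbb{K})$ (extending the notation of Section \ref{gr-op}), is immediate from the equivalence $h$ of (\ref{eq-cat}) together with the enriched Yoneda lemma; explicitly $P_n(m) = \mathbb{K}[\gr(m,n)] \simeq \mathbb{K}[F_n^m]$. For $n=0$, since $F_0$ is the null object, $\gr(m,0)$ is a singleton, so $P_0 \simeq \mathbb{K}$ is the constant functor, polynomial of degree zero.

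For $n \geq 1$, the non-polynomiality comes from a direct cross-effect computation using the cokernel description in Section \ref{rappels-poly}. The map $P_n(r^j_{\hat{k}}) : \mathbb{K}[F_n^{j-1}] \to \mathbb{K}[F_n^j]$ inserts $1$ in the $k$-th coordinate, so taking cokernels over $k=1,\ldots,j$ gives $\widetilde{cr}_j(P_n)(1,\ldots,1) \simeq \mathbb{K}[(F_n \setminus \{1\})^j]$, which is non-zero for every $j\geq 1$ because $F_n$ has non-trivial elements.

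For non-analyticity (still assuming $n\geq 1$), the plan is to exhibit an element of $P_n(1)$ that lies in no $\mathbf{p}_d P_n$. I take $[x_1] - [1] \in P_n(1) = \mathbb{K}[F_n]$; by Yoneda this corresponds to a natural transformation $\eta : P_1 \to P_n$. Unwinding the Yoneda correspondence shows that, for a homomorphism $\phi : F_m \to F_1$ with $\phi(x_i) = x^{a_i}$, one has $\eta_m(\phi) = [x_1^{a_1}, \ldots, x_1^{a_m}] - [1,\ldots,1] \in \mathbb{K}[F_n^m]$. Consequently $\ker(\eta) = \mathbf{p}_0 P_1 \simeq \mathbb{K}$ and $\mathrm{Im}(\eta)$ is isomorphic to the reduced functor $\overline{P}_1$, embedded in $P_n$ via the inclusion $\langle x_1 \rangle \hookrightarrow F_n$.

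A cross-effect computation parallel to the one above shows $\widetilde{cr}_j(\overline{P}_1)(1,\ldots,1) \simeq \mathbb{K}[(\mathbb{Z} \setminus \{0\})^j] \neq 0$ for every $j \geq 1$, so $\overline{P}_1$ is not polynomial of any degree. Since any subfunctor of $\mathbf{p}_d P_n$ is itself polynomial of degree $\leq d$, the subfunctor $\mathrm{Im}(\eta) \simeq \overline{P}_1 \subseteq P_n$ cannot be contained in any $\mathbf{p}_d P_n$; in particular $[x_1] - [1] \notin \bigcup_d \mathbf{p}_d P_n(1)$, so $P_n \neq \bigcup_d \mathbf{p}_d P_n$ and $P_n$ is not analytic. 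The main step is the identification of $\mathrm{Im}(\eta)$ with $\overline{P}_1$ via the explicit Yoneda formula; once this is in hand, non-analyticity reduces to the cross-effect computation already used for non-polynomiality.
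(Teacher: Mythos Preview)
Your argument is correct. The paper's own proof is a single line citing the equivalence $h:\mathbf{A}_0\xrightarrow{\simeq}\mathbb{K}\gr^{op}$ and treats the fact that the representable $P_n$ is neither polynomial nor analytic for $n\geq 1$ as known background. You supply that justification explicitly: the cross-effect identification $\widetilde{cr}_j(P_n)(1,\ldots,1)\simeq\mathbb{K}[(F_n\setminus\{1\})^j]$ settles non-polynomiality, and your Yoneda argument---embedding $\overline{P}_1$ as the subfunctor of $P_n$ generated by $[x_1]-[1]$, then using that subfunctors of $\mathbf{p}_d P_n$ are themselves polynomial of degree $\leq d$---correctly rules out analyticity. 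The one step you leave slightly implicit is that $\mathrm{Im}(\eta)$ is precisely the subfunctor generated by $[x_1]-[1]$, so that containment of this element in $\mathbf{p}_d P_n(1)$ would force $\mathrm{Im}(\eta)\subseteq\mathbf{p}_d P_n$; this is immediate from the Yoneda description of $\eta$. A shorter route to non-analyticity would be to note that the split inclusion $F_1\hookrightarrow F_n$ makes $P_1$ a direct summand of $P_n$, reducing to the case $n=1$, but your approach works as written.
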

\begin{proof}
This follows from the isomorphism $h: \mathbf{A}_0 \xrightarrow{\simeq} \mathbb{K}\gr^{op}$ given in (\ref{eq-cat}).
\end{proof}
For $n\geq 1$, note that $\mathbf{A}_0(0,n)=\mathbb{K}[\eta^{\odot n}]$ where $\eta \in \mathbf{A}_0(0,1)$ is the morphism defined in \cite[Section 5.6, (5.28)]{HM} corresponding to $m_1$ (see Section \ref{gr-op}) via the isomorphism $h$ of (\ref{eq-cat}) .

 \begin{lem}\label{lm-d>0}
 For $d \geq 1$ and $n\geq 1$ the functor $\mathbf{A}_d(n, -):\gr^{op} \to \ev$ is not polynomial.
 \end{lem}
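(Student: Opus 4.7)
The plan is to prove that for every $m \geq 1$ the cross-effect $\widetilde{cr}_m(\mathbf{A}_d(n,-))(1,\ldots,1)$ is non-zero, which rules out polynomiality of $\mathbf{A}_d(n,-)$ of any finite degree. The key tool will be the homotopy-class decomposition (\ref{decomposition-homotopy}) of $\mathbf{A}_d(n, m)$ and its compatibility (\ref{compo-grad-2}) with composition in $\mathbf{A}$; the strategy is to extract a non-trivial quotient of the cross-effect by projecting onto a carefully chosen homotopy component.

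Fix $m \geq 1$ and, using $n \geq 1$, let $f_m : F_m \to F_n$ be the homomorphism defined by $f_m(x_i) = x_1$ for every $i$. Let $\pi_{f_m} : \mathbf{A}_d(n,m) \to \mathbf{A}_d(n,m)_{f_m}$ be the canonical projection afforded by (\ref{decomposition-homotopy}). By (\ref{compo-grad-2}), for each $i$ the image of the reduction map $(r^m_{\hat{i}})_* : \mathbf{A}_d(n, m-1) \to \mathbf{A}_d(n, m)$ is contained in the sum of those components $\mathbf{A}_d(n, m)_h$ where $h : F_m \to F_n$ is obtained by composing $r^m_{\hat{i}} : F_m \to F_{m-1}$ with some $g \in \gr(m-1, n)$. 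Any such $h$ satisfies $h(x_i) = g(r^m_{\hat{i}}(x_i)) = g(1) = 1$, whereas $f_m(x_i) = x_1 \neq 1$. Hence $\pi_{f_m}$ vanishes on the image of each $(r^m_{\hat{i}})_*$ and factors through the cokernel to yield a surjection
$$\widetilde{cr}_m(\mathbf{A}_d(n,-))(1,\ldots,1) \twoheadrightarrow \mathbf{A}_d(n,m)_{f_m}.$$

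It remains to exhibit, for every $m \geq 1$, a non-zero element $D_m$ of $\mathbf{A}_d(n,m)_{f_m}$. I plan to take a fixed degree-$d$ Jacobi diagram $E$ (say a caterpillar tree) whose univalent vertices are placed on arcs $1$ and $2$ of $X_m$ when $m \geq 2$, and on the single arc when $m = 1$, and to decorate each arc of $X_m$ with a single bead $x_1 \in F_n$. By construction $D_m$ lies in $\mathbf{A}_d(n, m)_{f_m}$. The main obstacle of the proof will be to verify that $D_m$ is non-zero modulo the AS, IHX, STU and bead-equivalence relations. Intuitively this should hold because, for $m \geq 3$, the bead on each arc $i \geq 3$ sits on an arc carrying no univalent vertex and so cannot be absorbed into the Jacobi-diagram part via the bead-equivalences, which prevents any representative of $D_m$ from having an empty arc; for $m \in \{1,2\}$ non-triviality reduces to the existence of non-zero degree-$d$ Jacobi diagrams on $X_1$ or $X_2$ in the appropriate homotopy class. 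A rigorous justification can likely be obtained either by a direct diagrammatic analysis or by evaluating against a suitable weight system coming from a non-trivial representation of a metric Lie algebra. Once $D_m$ is shown to be non-zero, the surjection above is non-zero for every $m \geq 1$, completing the proof.
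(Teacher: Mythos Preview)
Your strategy is correct and closely related to the paper's, though you organize it differently. Both arguments use the cokernel description of the cross-effect and the observation that anything in the image of $(r^m_{\hat{i}})_*$ is represented by a diagram whose $i$-th arc carries the bead $1$ and no univalent vertices. The paper exhibits, for $k\geq 2d+1$, the diagram with $d$ disjoint chords on arcs $1,\ldots,2d$ together with a single bead $w\neq 1$ on arc $k$, so that \emph{every} arc carries either a vertex or a nontrivial bead, and declares this nonzero in the cokernel. You instead use only the bead information via the $\gr^{op}$-grading (\ref{decomposition-homotopy}): since $f_m$ has no trivial coordinate, the projection onto $\mathbf{A}_d(n,m)_{f_m}$ annihilates every image at once, which is a cleaner reduction and works uniformly for all $m\geq 1$.

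The gap you flag is genuine, and your heuristic for it (arc-beads on vertex-free arcs ``cannot be absorbed'') is not a proof: vanishing modulo STU/AS/IHX is a linear condition, not a matter of simplifying one representative. A direct fix, avoiding weight systems: for any $f\in\gr(m,n)$, placing a bead $f(x_i)$ at the very start of arc $i$ (respectively $f(x_i)^{-1}$) commutes with all the defining local relations and so gives mutually inverse linear isomorphisms $\mathbf{A}_d(n,m)_{\mathbf{0}}\cong\mathbf{A}_d(n,m)_f$. Iterating Corollary~\ref{subfunctor-1} yields $\mathbf{A}_d(0,m)=\mathbf{A}_d(0,m)_{\mathbf{0}}\hookrightarrow\mathbf{A}_d(n,m)_{\mathbf{0}}$, and the classical space $\mathbf{A}_d(0,m)$ of degree-$d$ Jacobi diagrams on $X_m$ is nonzero for all $d,m\geq 1$. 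Hence $\mathbf{A}_d(n,m)_{f_m}\neq 0$ and your argument closes.
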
 
 \begin{proof}
 We will prove that for $k\geq 2d+1$, $\widetilde{cr}_{k}(\mathbf{A}_d(n,-)) \not=0$.

By Section \ref{rappels-poly}, $\widetilde{cr}_{k}(\mathbf{A}_d(n,-))(1,\ldots, 1)$ is the cokernel of the following homomorphism:
$$\overset{k}{\underset{l=1}{\bigoplus}}\mathbf{A}_d(n,-)(F_{k-1}) \xrightarrow{(\mathbf{A}_d(n,-)(r^{k}_{\hat{1}}), \ldots, \mathbf{A}_d(n,-)(r^{k}_{\hat{k}}))} \mathbf{A}_d(n,-)(F_{k}) $$
 $\mathbf{A}_d(n,-)(F_{k-1}) \not= 0$ and a generator of $\mathbf{A}_d(n,-)(F_{k-1})$ is represented by a $F_n$-beaded Jacobi diagram $D$ on $X_{k-1}$ having $2d$ vertices. For $1\leq i\leq k$,
$\mathbf{A}_d(n,-)(r^{k}_{\hat{i}})(D)$ is the $F_n$-beaded Jacobi diagram $D$ on $X_{k}$ obtained from $D$ by inserting the $F_n$-beaded arc:
\[
	\vcenter{\hbox{\begin{tikzpicture}[baseline=1.8ex,scale=0.4]
	\draw[->,>=latex]  (-2,0) to[bend right=55] (-4,0);
			\draw[fill=white] (-3,0.5) circle (4pt);
			\draw (-3,0.5) node[above] {$\scriptstyle{1}$};
		\end{tikzpicture}}}
		\]
 between the $(i-1)$-th and the $i$-th arc of $D$. So the following $F_n$-beaded Jacobi diagram on $X_k$ is a non-zero element  in $\widetilde{cr}_{k}(\mathbf{A}_d(n,-))(1,\ldots, 1)$:

 \[
	\vcenter{\hbox{\begin{tikzpicture}[baseline=1.8ex,scale=0.4]
	\draw[->,>=latex]  (-2,0) to[bend right=45] (-4,0);
	\draw[->,>=latex]  (2,0) to[bend right=45] (0,0);
	\draw[->,>=latex]  (8,0) to[bend right=45] (6,0);
	\draw[->,>=latex]  (12,0) to[bend right=45] (10,0);
	\draw[->,>=latex]  (18,0) to[bend right=45] (16,0);
	\draw[->,>=latex][thick, dotted]  (1,0.5) to[bend right=45] (-3,0.5);
	\draw[->,>=latex][thick, dotted]  (11,0.5) to[bend right=45] (7,0.5);
\draw (-3,0) node[below] {$\scriptstyle{1}$};
	\draw (1,0) node[below] {$\scriptstyle{2}$};
	\draw (4,0) node[below] {$\scriptstyle{\ldots}$};
	\draw (7,0) node[below] {$\scriptstyle{2d-1}$};
	\draw (11,0) node[below] {$\scriptstyle{2d}$};
		\draw (14,0) node[below] {$\scriptstyle{\ldots}$};
		\draw (17,0) node[below] {$\scriptstyle{k}$};
		\draw[fill=white] (-1,1.3) circle (4pt);
		\draw[fill=white] (9,1.3) circle (4pt);
		\draw[fill=white] (17,.5) circle (4pt);
		\draw (17,0.5) node[above] {$1\not= w \in F_n$};
	\end{tikzpicture}}}
		\]

 \end{proof}
 
  \begin{proof}[Proof of Proposition \ref{poly-A_d(n)}]
  The case $d=0$ follows from Lemma \ref{lm-d=0}.   For $d \geq 1$, if $n\geq 1$ the functor $\mathbf{A}_d(n, -)$ is not polynomial by Lemma \ref{lm-d>0}. The polynomiality of $\mathbf{A}_d(0, -)$ is given by  \cite[Proposition 8.1]{Katada} (see also Corollary  \ref{poly-Katada}).
 
 \end{proof}
 
\subsection{Filtration of the functors $\mathbf{A}(n, -)$ and $\mathbf{A}_d(n, -)$}

For $n,m, t \in \nat$, let $\mathbf{A}^t(n,m)$ be the subspace of $\mathbf{A}(n, m)$ generated by Jacobi diagrams having at least $t$ trivalent vertices. Similarly we define $\mathbf{A}^t_{d}(n, m)$. We have the following result:
\begin{prop} \label{filtration}
For $d,m \in \nat$, the functors $\mathbf{A}(n, -)$ and $\mathbf{A}_d(n, -)$ have a filtration given by the subfunctors:
$$\mathbf{A}^t(n, -) \subset \mathbf{A}(n, -); \qquad \mathbf{A}^t_{d}(n, -) \subset \mathbf{A}_d(n, -).$$
\end{prop}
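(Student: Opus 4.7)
The plan is to reduce to showing that, for any $D\in\mathbf{A}(n,m)$ and any degree-$0$ morphism $E\in\mathbf{A}_0(m,m')$, the composition $E\circ D$ has exactly as many trivalent vertices as $D$. Granting this, the subspaces $\mathbf{A}^t(n,m)$ are stable under the $\gr^{op}$-action on the target, so they assemble into a subfunctor of $\mathbf{A}(n,-)$; and since composition with morphisms of $\mathbf{A}_0$ preserves the Jacobi-diagram degree by (\ref{degree-composition}), we automatically get $\mathbf{A}^t_d(n,-)=\mathbf{A}^t(n,-)\cap\mathbf{A}_d(n,-)$ as a subfunctor of $\mathbf{A}_d(n,-)$.

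To establish the key claim, I would first recall via the isomorphism $h:\mathbf{A}_0\xrightarrow{\simeq}\mathbb{K}\gr^{op}$ of (\ref{eq-cat}) that every morphism in $\mathbf{A}_0(m,m')$ is represented by an \emph{empty} $F_m$-beaded Jacobi diagram on $X_{m'}$, that is, the $1$-manifold $X_{m'}$ equipped only with beads labelled in $F_m$, having no dashed edges and no trivalent vertices at all. Then, from the definition of composition in $\mathbf{A}$ given in \cite[Section 4.2]{HM}, forming $E\circ D$ amounts to a cabling procedure: each bead $x_i^{\pm 1}$ on an arc of $E$ is replaced by a parallel copy (with orientation dictated by the sign) of the $i$-th component of $X_m$, transporting with it the univalent vertices of $D$ attached to that arc, together with all their outgoing dashed edges, trivalent vertices and beads. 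Since $E$ carries no dashed graph, no STU relation is triggered anywhere in the composition; the uni-trivalent skeleton of $D$ is just carried along the substitution, so the trivalent vertices of $E\circ D$ are in canonical bijection with those of $D$.

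To make this rigorous without unpacking the full cabling formula, it is enough to check it on a generating family of $\mathbf{A}_0$. Using Pirashvili's presentation recalled in Section~\ref{gr-op} and the isomorphism $h$, $\mathbf{A}_0$ is generated (as a linear symmetric monoidal category) by the permutations and by the five morphisms corresponding to $m_1,\ldots,m_5$. For the permutations and for the morphism corresponding to $m_4$ (orientation reversal), the action on $D$ is a pure relabelling; for $m_1$ and $m_3$ one inserts or removes an empty arc; for $m_2$ one concatenates beads on a single arc; and for $m_5$ one takes two parallel copies of the ambient arc and distributes the univalent vertices accordingly. In every case the number of trivalent vertices of $D$ is unchanged.

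The only delicate point will be bookkeeping: one must verify that the cabling formula passes cleanly to the quotient by the local moves of Section \ref{rappels-HM} and by STU, i.e.\ that the defining relations in $\mathbf{A}(n,-)$ preserve the filtration by trivalent vertices. This is where I expect the real work to lie, but it is manageable because the moves on beads are trivalent-vertex-preserving by inspection, and STU is never invoked since $E$ has no dashed part. Once this is confirmed on the generators, stability extends to all of $\mathbf{A}_0$ by bilinearity and functoriality, completing the proof of both inclusions.
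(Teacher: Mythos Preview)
Your proposal is correct and follows essentially the same approach as the paper's proof: both argue that a morphism in $\mathbf{A}_0$ acts by cabling the Jacobi diagram onto new arcs, an operation that leaves the dashed uni-trivalent graph (and hence the count of trivalent vertices) unchanged. The paper states this in a single sentence, whereas you unpack it on the monoidal generators $m_1,\dots,m_5$; your worry about STU and the local moves is harmless bookkeeping, since these relations are already built into the well-definedness of composition in $\mathbf{A}$ and one only needs that cabling sends a \emph{representative} with $\geq t$ trivalent vertices to (a linear combination of) representatives with $\geq t$ trivalent vertices.
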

\begin{proof}
Let $D$ be a generator in $\mathbf{A}^t(n, m)$ and $f\in \gr^{op}(m,m')$. Via the isomorphism $\mathbb{K}[\gr^{op}] \simeq \mathbf{A}_0$, $f$ corresponds to an element in $\mathbf{A}_0(m,m')$. The composition in $\mathbf{A}$ is given by a suitable cabling of the Jacobi diagram of $D$ on the arcs of $X_{n'}$. This operations does not change the number of trivalent vertices in the Jacobi diagram.
\end{proof}

In \cite{Katada}, Katada considers the filtration 
$$0=\mathbf{A}^{2d-1}_{d}(0, -)\subset \ldots \subset \mathbf{A}^{1}_d(0, -)\subset \mathbf{A}^{0}_d(0, -)= \mathbf{A}_d(0, -).$$

\section{The functors $\mathbf{A}(n,-)_{\mathbf{0}}$ and beaded open Jacobi diagrams}
For $d \in \mathbb{N}$ and $n\geq 1$, by Proposition \ref{poly-A_d(n)}, $\mathbf{A}_d(n,-)$ is \textit{not} polynomial.
In this section we introduce a subfunctor of $\mathbf{A}_d(n,-)$, which is polynomial and which coincides, for $n=0$, with $\mathbf{A}_d(0,-)$.

\subsection{Definition of the functors $\mathbf{A}(n,-)_{\mathbf{0}}$}
The functors $\mathbf{A}(n,-)_{\mathbf{0}}$ are defined using the $\gr^{op}$-grading of $\mathbf{A}$ which is compatible with the composition in $\mathbf{A}$ by (\ref{compo-grad-2}).

  We deduce from (\ref{compo-grad-2}) the following Proposition:
\begin{prop} \label{sous-foncteur}
For $n \in \mathbb{N}$, the $\gr^{op}$-grading gives rise to the subfunctor
$\mathbf{A}(n,-)_{\mathbf{0}}: \gr^{op} \to \ev$ of $\mathbf{A}(n,-): \gr^{op} \to \ev$ and the subfunctor $\mathbf{A}_d(n,-)_{\mathbf{0}}: \gr^{op} \to \ev$ of $\mathbf{A}_d(n,-): \gr^{op} \to \ev$.
\end{prop}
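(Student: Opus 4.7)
The plan is to verify, directly from the two compatibility properties (\ref{degree-composition}) and (\ref{compo-grad-2}), that the pieces indexed by $\mathbf{0}$ in the $\gr^{op}$-grading assemble into a subfunctor, and then to intersect with the degree-$d$ piece. Recall that the action of a morphism $\phi \in \gr^{op}(m,m')$ on $\mathbf{A}(n,-)$ is realized by postcomposition in $\mathbf{A}$ with $\hat\phi := h^{-1}(\phi) \in \mathbf{A}_0(m,m')$, where $h$ is the isomorphism of (\ref{eq-cat}). Its homotopy class is $h(\hat\phi) = \phi$ and, since $\hat\phi \in \mathbf{A}_0$, its Jacobi-diagram degree is $0$.

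First I would take $\alpha \in \mathbf{A}(n,m)_{\mathbf{0}}$ and apply (\ref{compo-grad-2}) to conclude that the image $\hat\phi \circ \alpha \in \mathbf{A}(n,m')$ lies in the summand indexed by the composition $\phi \circ \mathbf{0} \in \gr^{op}(n,m')$. The key (and only non-formal) observation is that $\mathbf{0} \in \gr^{op}(n,m)$ corresponds to the homomorphism $F_m \to F_n$ which factors through the null object $0$ in $\gr$, so for any $\phi \in \gr^{op}(m,m')$ the composition $\phi \circ \mathbf{0}$ still factors through $0$; hence $\phi \circ \mathbf{0} = \mathbf{0} \in \gr^{op}(n,m')$. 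This absorbing property of $\mathbf{0}$ is what makes the localization at $\mathbf{0}$ a well-defined subfunctor. Consequently $\hat\phi \circ \alpha \in \mathbf{A}(n,m')_{\mathbf{0}}$, proving that $\mathbf{A}(n,-)_{\mathbf{0}} \subset \mathbf{A}(n,-)$ is stable under the $\gr^{op}$-action.

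Finally, for the refined claim about $\mathbf{A}_d(n,-)_{\mathbf{0}}$, I would note that, as subspaces of $\mathbf{A}(n,m)$, the degree grading (\ref{decomposition-degree}) and the homotopy-class grading (\ref{decomposition-homotopy}) are compatible, so $\mathbf{A}_d(n,m)_{\mathbf{0}} = \mathbf{A}_d(n,m) \cap \mathbf{A}(n,m)_{\mathbf{0}}$. Since $\hat\phi$ has degree $0$, the compatibility (\ref{degree-composition}) ensures $\hat\phi \circ \alpha$ has the same degree as $\alpha$, so the intersection is preserved by the $\gr^{op}$-action. This yields the subfunctor $\mathbf{A}_d(n,-)_{\mathbf{0}} \subset \mathbf{A}_d(n,-)$. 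There is no genuine obstacle in the proof; it is a direct assembly of the two grading compatibilities with the observation that $\mathbf{0}$ is an absorbing element for composition in $\gr^{op}$.
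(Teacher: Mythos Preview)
Your proposal is correct and follows the same approach as the paper: the key point in both is that $\mathbf{0}$ is absorbing under composition in $\gr^{op}$, so that the graded piece indexed by $\mathbf{0}$ is stable under the action. Your write-up is in fact more detailed than the paper's (which records only the absorbing identities $\mathbf{0}\circ g=\mathbf{0}$ and $h\circ\mathbf{0}=\mathbf{0}$), since you spell out the role of (\ref{compo-grad-2}) and treat the degree-$d$ refinement explicitly via (\ref{degree-composition}).
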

\begin{proof}
For $g \in \gr^{op}(m,n')$ and $h \in \gr^{op}(m,n')$ we have
$$\mathbf{0} \circ g=\mathbf{0} \quad \text{and} \quad h \circ \mathbf{0}=\mathbf{0}.$$
where $\mathbf{0} \in \gr^{op}(n,m)=\gr(m,n)$ is the homomorphism $F_m \to F_n$ sending each generator to $1$.  

\end{proof}
\begin{rem}
For $n,m \in \mathbb{N}$, the generators of $\mathbf{A}(n,m)_{\mathbf{0}}$ are those of $\mathbf{A}(n,m)$ which can be represented by an $F_n$-beaded Jacobi diagrams on $X_m$, without beads on $X_m$. 
\end{rem}

\begin{cor} \label{subfunctor-1}
For $d,n \in \mathbb{N}$, the precomposition with $Id_n \odot \epsilon \in \mathbf{A}_0(n+1,n)$  gives injective natural transformations
$$ \mathbf{A}(n, -)_{\mathbf{0}} \hookrightarrow  \mathbf{A}(n+1,-)_{\mathbf{0}}; \qquad \mathbf{A}_d(n, -)_{\mathbf{0}} \hookrightarrow  \mathbf{A}_d(n+1,-)_{\mathbf{0}}.$$
\end{cor}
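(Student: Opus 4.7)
The strategy is to observe that the injective natural transformations already constructed in Lemma \ref{subfunctor} restrict to the subfunctors $\mathbf{A}(n,-)_{\mathbf{0}}$ and $\mathbf{A}_d(n,-)_{\mathbf{0}}$ of Proposition \ref{sous-foncteur}. Since the unrestricted natural transformations are injective, so are their restrictions, and there is nothing further to prove beyond checking that the target of the restricted map really lies inside the appropriate subfunctor.

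The essential point is to verify that precomposition with $Id_n \odot \epsilon$ preserves the homotopy-class component indexed by $\mathbf{0}$. Under the isomorphism $h: \mathbf{A}_0 \xrightarrow{\simeq} \mathbb{K}\gr^{op}$ of (\ref{eq-cat}), the element $Id_n \odot \epsilon \in \mathbf{A}_0(n+1,n)$ corresponds (via $\epsilon \leftrightarrow m_3$ and by tensoring with $Id_n$) to the canonical inclusion $\iota : F_n \hookrightarrow F_{n+1}$ onto the first $n$ generators. By the compatibility of the $\gr^{op}$-grading with composition recalled in (\ref{compo-grad-2}), for $D \in \mathbf{A}(n,m)_{\mathbf{0}}$ the composite $D \circ (Id_n \odot \epsilon) \in \mathbf{A}(n+1,m)$ has homotopy class given in $\gr$ by $\iota \circ \mathbf{0}: F_m \to F_n \to F_{n+1}$. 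Since $\mathbf{0}$ sends each generator of $F_m$ to the unit of $F_n$ and $\iota$ sends the unit to the unit, this composite is again the zero homomorphism $\mathbf{0}: F_m \to F_{n+1}$. Therefore $D \circ (Id_n \odot \epsilon)$ lies in $\mathbf{A}(n+1,m)_{\mathbf{0}}$, which gives the first inclusion.

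For the graded statement, I note that $Id_n \odot \epsilon$ is of Jacobi-degree $0$ because it already belongs to $\mathbf{A}_0(n+1,n)$; by (\ref{degree-composition}) precomposition with it therefore preserves the Jacobi-degree, so the restriction sends $\mathbf{A}_d(n,m)_{\mathbf{0}}$ into $\mathbf{A}_d(n+1,m)_{\mathbf{0}}$. There is no genuine obstacle in this corollary: the whole argument is a direct combination of Lemma \ref{subfunctor} with the two compatibilities (\ref{compo-grad-2}) and (\ref{degree-composition}), and the only care needed is to keep track of the direction of composition between $\mathbf{A}$, $\gr$ and $\gr^{op}$ when identifying $Id_n \odot \epsilon$ with the inclusion $\iota$.
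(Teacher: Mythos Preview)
Your proof is correct and is precisely the argument the paper leaves implicit: the corollary is stated without proof, as an immediate consequence of Lemma~\ref{subfunctor} together with the compatibilities (\ref{compo-grad-2}) and (\ref{degree-composition}) underlying Proposition~\ref{sous-foncteur}. Your explicit identification of $h(Id_n\odot\epsilon)$ with the inclusion $\iota:F_n\hookrightarrow F_{n+1}$ and the verification that $\iota\circ\mathbf{0}=\mathbf{0}$ in $\gr$ is exactly the missing detail.
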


Note that, for $n=0$, we have $\mathbf{A}(0,-)_{\mathbf{0}}=\mathbf{A}(0,-)$ and $\mathbf{A}_d(0,-)_{\mathbf{0}}=\mathbf{A}_d(0,-)$.

Similarly to Proposition \ref{filtration} we have the following result:
\begin{prop} \label{filtration2}
For $d,n \in \nat$, the functors $\mathbf{A}(n, -)_{\mathbf{0}}$ and $\mathbf{A}_d(n, -)_{\mathbf{0}}$ have a filtration given by the subfunctors:
$$\mathbf{A}^t(n, -)_{\mathbf{0}} \subset \mathbf{A}(n, -)_{\mathbf{0}}; \qquad \mathbf{A}^t_{d}(n, -)_{\mathbf{0}} \subset \mathbf{A}_d(n, -)_{\mathbf{0}}.$$
\end{prop}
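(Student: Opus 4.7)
The plan is to reduce this statement to the combination of Proposition~\ref{filtration} and Proposition~\ref{sous-foncteur}, so that no new cabling analysis is needed. First, I would observe that by its very definition, $\mathbf{A}^t(n,m)_{\mathbf{0}}$ is the subspace of $\mathbf{A}(n,m)$ spanned by (equivalence classes of) $F_n$-beaded Jacobi diagrams on $X_m$ which simultaneously lie in the homotopy class $\mathbf{0}$ and possess at least $t$ trivalent vertices. In other words, inside $\mathbf{A}(n,m)$ one has
\[
\mathbf{A}^t(n,m)_{\mathbf{0}} \;=\; \mathbf{A}^t(n,m) \cap \mathbf{A}(n,m)_{\mathbf{0}},
\]
and analogously $\mathbf{A}^t_d(n,m)_{\mathbf{0}} = \mathbf{A}^t_d(n,m) \cap \mathbf{A}(n,m)_{\mathbf{0}}$, using the degree decomposition~(\ref{decomposition-degree}).

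Next, I would invoke the formal fact that an intersection of two subfunctors of a given functor is again a subfunctor. Proposition~\ref{filtration} asserts that $\mathbf{A}^t(n,-)$ (resp. $\mathbf{A}^t_d(n,-)$) is a subfunctor of $\mathbf{A}(n,-)$ (resp. $\mathbf{A}_d(n,-)$), and Proposition~\ref{sous-foncteur} asserts that $\mathbf{A}(n,-)_{\mathbf{0}}$ is a subfunctor of $\mathbf{A}(n,-)$. Consequently, for any $f \in \gr^{op}(m,m')$, the linear map induced by $f$ (i.e.\ post-composition in $\mathbf{A}$ with the corresponding element of $\mathbf{A}_0(m,m')$) preserves each of the two subspaces, and therefore preserves their intersection. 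This gives the required subfunctor property for $\mathbf{A}^t(n,-)_{\mathbf{0}}$, and the argument for $\mathbf{A}^t_d(n,-)_{\mathbf{0}}$ is identical once one additionally uses the compatibility of the degree grading with composition recalled in~(\ref{degree-composition}).

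There is no substantive obstacle here; the geometric content — namely, that the cabling operation neither increases the homotopy class structure nor alters the number of trivalent vertices — has already been packaged into the two earlier propositions. The only thing to note is that the filtrations are indexed by $t \in \nat$ and are exhaustive with $\mathbf{A}^0(n,-)_{\mathbf{0}} = \mathbf{A}(n,-)_{\mathbf{0}}$, which is immediate from the definitions, and that (as in the introduction) for the functor $\mathbf{A}_d(n,-)_{\mathbf{0}}$ the filtration terminates at $t = 2d$ since a degree $d$ Jacobi diagram has at most $2d$ trivalent vertices.
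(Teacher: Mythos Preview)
Your proposal is correct and matches the paper's own treatment: the paper introduces Proposition~\ref{filtration2} with the phrase ``Similarly to Proposition~\ref{filtration}'' and gives no separate argument, so the intended proof is precisely the reduction you describe. Your formulation via the intersection $\mathbf{A}^t(n,m)_{\mathbf{0}} = \mathbf{A}^t(n,m) \cap \mathbf{A}(n,m)_{\mathbf{0}}$ (which is legitimate since each generating diagram has both a well-defined homotopy class and a well-defined number of trivalent vertices, so the trivalent filtration is compatible with the direct sum~(\ref{decomposition-homotopy})) is a clean way to package the same observation.
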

\begin{rem} \label{A_0}
We have $\mathbf{A}_0(n, -)_{\mathbf{0}}=\mathbb{K}$ and $\mathbf{A}_0(n,-)\simeq P_n \simeq \mathbf{A}_0(n, -)_{\mathbf{0}} \oplus \overline{P}_n.$
\end{rem}

\subsection{The $Cat\mathcal{L}ie$-modules $J^{F_m}$ of $F_m$-beaded  open Jacobi diagrams} \label{JFm-section}

Recall that an \textit{open Jacobi diagram} is a uni-trivalent graph such that each trivalent vertex is oriented and having at least one univalent vertex in each connected component.
For generalities on open Jacobi diagrams we refer the reader to \cite[Section 5.6]{CDM}.

For $Z$ a set, a $Z$-\textit{labelled} open Jacobi diagram is an open Jacobi diagram  $D$ and a bijection: $\{\text{univalent vertices of } D\} \xrightarrow{\simeq } Z$.
Note that in \cite[p.13]{Katada}, $Z$-labelled open Jacobi diagrams are called \textit{special $Z$-coloured} open Jacobi diagrams.

For $G$ a group, a \textit{$G$-beaded open Jacobi diagram} is an open Jacobi diagram  whose graph edges are oriented and a map from a finite subset of $ D \setminus Vert(D)$ to $G$ which labels oriented edges of $D$ by elements in $G$.
In figures, the labels are encoded by "beads" coloured with elements of $G$.

Two  $G$-beaded open Jacobi diagrams are said to be \textit{equivalent} if they are related by the following local moves where $w,x \in G$:

 \begin{equation}\label{relations-ouvert}
	\vcenter{\hbox{\begin{tikzpicture}[baseline=1.8ex,scale=0.7]
	\draw[thick, dotted][-To,>=latex]  (0,0) to (2,0);
		\draw (2.1,0) node[right] {$\sim$};
	\draw[thick, dotted][-To,>=latex]  (3,0) to(5,0);
		\draw[fill=white] (0.5,0) circle (3pt);
		\draw (0.5,0) node[above] {$\scriptstyle{w}$};
			\draw[fill=white] (1.5,0) circle (3pt);
		\draw(1.5,0) node[above] {$\scriptstyle{x}$};
			\draw[fill=white] (4,0) circle (3pt);
	\draw (4,0) node[above] {$\scriptstyle{wx}$};
	
		\draw[thick, dotted] (5.5,0) node[right] {;};	
		
		\draw[thick, dotted][-To,>=latex]  (6.5,0) to (7.5,0);
		\draw(7.6,0) node[right] {$\sim$};
	\draw[thick, dotted][-To,>=latex]  (8.5,0) to(9.5,0);
		\draw[fill=white] (7,0) circle (3pt);
		\draw (7,0) node[above] {$\scriptstyle{1}$};

			\draw[thick, dotted] (10,0) node[right] {;};	
						
	\draw[thick, dotted][-To,>=latex]  (11,0) to (12,0);
	\draw[thick, dotted][-To,>=latex]  (12,0) to (13,0);
		\draw[thick, dotted][-To,>=latex]  (12,0) to (13,1);
		\draw[thick, dotted] (13.1,0) node[right] {$\sim$};
	\draw[thick, dotted][-To,>=latex]  (14,0) to(15,0);
		\draw[thick, dotted][-To,>=latex]  (15,0) to(16,0);
	\draw[thick, dotted][-To,>=latex]  (15,0) to (16,1);
		\draw[fill=white] (11.5,0) circle (3pt);
		\draw (11.5,0) node[above] {$\scriptstyle{w}$};
					\draw[fill=white] (15.5,0) circle (3pt);
		\draw (15.5,0) node[below] {$\scriptstyle{w}$};
			\draw[fill=white] (15.5,.5) circle (3pt);
		\draw (15.5,.5) node[above] {$\scriptstyle{w}$};
		
		\draw[thick, dotted] (16.5,0) node[right] {;};

			\draw[thick, dotted][-To,>=latex]  (17.5,0) to (18.5,0);
		\draw (18.6,0) node[right] {$\sim$};
	\draw[thick, dotted][-To,>=latex]  (20.5,0) to(19.5,0);
		\draw[fill=white] (18,0) circle (3pt);
		\draw (18,0) node[above] {$\scriptstyle{w}$};
					\draw[fill=white] (20,0) circle (3pt);
	\draw (20,0) node[above] {$\scriptstyle{w^{-1}}$};

			\end{tikzpicture}}}
		\end{equation}

For $G$ a group and $Z$ a set, $J^G(Z)$ is the quotient by the AS and the IHX relations, of the $\mathbb{K}$-vector space generated by equivalence classes of $Z$-labelled, $G$-beaded, open Jacobi diagrams. Let $J^G_d(Z)$ be the subspace of $J^G(Z)$ generated by the Jacobi diagram having $2d$ vertices.

A generator in $Cat\mathcal{L}ie(n,m)$
can be viewed  as a $\mathbf{m+n}$-labelled, $F_0=\{1\}$-beaded open Jacobi diagram. In this case the orientation of the edges can be taken arbitrarily (by the last  relation given in (\ref{relations-ouvert})). 

\begin{prop} \label{Foncteurs-J_d}
For $d \in \nat$, 
$\mathbf{n} \mapsto J^{F_m}_d(\mathbf{n} )$ has the structure of a $\mathbb{K}$-linear functor on $Cat\mathcal{L}ie$
\end{prop}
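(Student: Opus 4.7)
The plan is to exploit the standard presentation of $Cat\mathcal{L}ie$ by generators---the symmetric groups $\mathfrak{S}_n \simeq Cat\mathcal{L}ie(n,n)$ together with the Lie bracket morphisms $\mu_i^{n+1} \in Cat\mathcal{L}ie(n+1,n)$---subject to the compatibility with permutations and to the operadic relations (antisymmetry and Jacobi) of $\mathcal{L}ie$. It suffices to specify the action of these generators on $J^{F_m}_d$ and to verify that the relations are sent to identities.

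For $\sigma \in \mathfrak{S}_n$ I define $J^{F_m}_d(\sigma) : J^{F_m}_d(\mathbf{n}) \to J^{F_m}_d(\mathbf{n})$ by postcomposing the bijection $\{\text{univalent vertices}\} \xrightarrow{\simeq} \mathbf{n}$ with $\sigma$. For $\mu_i^{n+1}$ I define $J^{F_m}_d(\mu_i^{n+1}): J^{F_m}_d(\mathbf{n+1}) \to J^{F_m}_d(\mathbf{n})$ on a generator $D$ as follows: connect the two univalent vertices labelled $i$ and $n+1$ to a newly introduced counterclockwise-oriented trivalent vertex, and let its remaining edge terminate at a new univalent vertex relabelled $i$; all other labels are preserved. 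Since the operation removes two univalent vertices and adds one trivalent vertex and one univalent vertex, the total vertex count is unchanged, so the degree $d$ is preserved.

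Well-definedness on equivalence classes is immediate: the local bead relations \eqref{relations-ouvert} and the AS and IHX relations all take place in regions disjoint from the two chosen univalent vertices, and no bead is introduced by the construction. Hence the map descends to $J^{F_m}_d(\mathbf{n+1})$. The compatibility of $J^{F_m}_d(\mu_i^{n+1})$ with the symmetric group action away from the indices $i,n+1$ is immediate from the definitions, and compatibility with identities is trivial.

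It remains to verify the two defining operadic relations of $Cat\mathcal{L}ie$. The antisymmetry relation, namely $J^{F_m}_d(\mu_i^{n+1} \circ \tau_{i,n+1}) = - J^{F_m}_d(\mu_i^{n+1})$ where $\tau_{i,n+1}$ is the transposition swapping $i$ and $n+1$, translates on diagrams to exchanging the two edges arriving at the newly created trivalent vertex: this reverses its cyclic order and so picks up a sign by the AS relation. The Jacobi identity in $\mathcal{L}ie$, once composed on the right with the relevant cyclic permutations, translates via the construction into the statement that the signed sum of three diagrams differing only by the local configuration of two adjacent trivalent vertices vanishes, which is precisely the IHX relation. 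The main technical step is this last identification between the operadic Jacobi relation in $Cat\mathcal{L}ie$ and the IHX relation on open Jacobi diagrams, but this is the classical dictionary between $\mathcal{L}ie$ and Jacobi diagrams (see \cite[Chapter 5]{CDM}), so no genuine difficulty arises.
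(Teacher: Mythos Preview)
Your proof is correct and follows essentially the same approach as the paper: define the functor on the generators of $Cat\mathcal{L}ie$ (the symmetric groups acting by relabelling, and the bracket morphisms $\mu_i^{n+1}$ acting by gluing a Y-tree at the designated pair of univalent vertices), then observe that the operadic antisymmetry and Jacobi relations are carried to the AS and IHX relations on open Jacobi diagrams. Your write-up is somewhat more explicit than the paper's about degree preservation, well-definedness modulo the bead and AS/IHX relations, and the verification of each operadic relation, but the argument is the same.
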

\begin{proof}
By the description of the category $Cat\mathcal{L}ie$ given in Section \ref{CatLie}, it is sufficient to define $J^{F_m}_d$ on the generators $\sigma \in Cat\mathcal{L}ie(n,n)\simeq \mathbb{K}[\mathfrak{S}_n]$ and $\mu_i^{n}\in Cat\mathcal{L}ie(n,n-1)$.

Let $D$ be a generator in $J^{F_m}_d(n )$, that is $D$ is represented by a $\mathbf{n}$-labelled, $F_m$-beaded, open Jacobi diagram.

The action of $Cat\mathcal{L}ie(n,n)\simeq \mathbb{K}[\mathfrak{S}_n]$ on $D$ is given by the permutation of the labels of univalent vertices.

To define  $J^{F_m}_d(\mu_i^{n})(D)$, consider  the open Jacobi diagram $D'$ obtained from $D$ by gluing the tree ${\hbox{\begin{tikzpicture}[baseline=1.8ex,scale=0.4]
	\draw[thick, dotted]  (-3,3) to (-2,2);
	\draw[thick, dotted] (-1,3) to (-2,2);
	\draw[thick, dotted]  (-2,1) to (-2,2);
	\draw (-3,3) node[above] {$\scriptstyle{i}$};
	\draw (-1,3) node[above] {$\scriptstyle{n}$};
		\end{tikzpicture}}}$ 
		to the corresponding univalent vertices of $D$.  Edges of $D'$ inherit  an orientation from $D$ and a labelling in $F_m$. Colouring  the univalent vertex of $D'$ without label by $i$, we obtain a $(\mathbf{n-1})$-labelled, $F_m$-beaded, open Jacobi diagram. The antisymmetry and Jacobi relations in the operad $\mathcal{L}ie$ imply that this construction is well-defined on $J^{F_m}_d(\mathbf{n} )$.
		
\end{proof}

\subsection{The correspondence between $\mathbf{A}_d(n,-)_{\mathbf{0} }$ and  $J_d^{F_n}$} \label{main-thm}

We have the following Theorem:

\begin{thm} \label{thm-J_d}
For $n,d \in \mathbb{N}$, we have an equivalence of functors in $\mathcal{F}_{\omega}(\gr^{op}; \mathbb{K})$:
$$\alpha(J_d^{F_n}) \simeq \mathbf{A}_d(n,-)_{\mathbf{0} }.$$
\end{thm}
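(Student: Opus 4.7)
The plan is to use the explicit description $\alpha = Cat\mathcal{A}ss^u \underset{Cat\mathcal{L}ie}{\otimes} (-)$ from \cite[Theorem 9.17]{P-21-2} to construct a natural isomorphism directly, rather than, for example, first establishing that $\mathbf{A}_d(n,-)_{\mathbf{0}}$ is analytic and then computing $\alpha^{-1}$ via cross-effects. A typical class in $\alpha(J_d^{F_n})(m)$ is $[\phi \otimes D]$, where $\phi \in Cat\mathcal{A}ss^u(i,m)$ is a set map $\mathbf{i} \to \mathbf{m}$ equipped with an ordering on each fiber, and $D \in J_d^{F_n}(i)$ is an $\mathbf{i}$-labelled, $F_n$-beaded open Jacobi diagram of degree $d$, subject to the coend relation over $Cat\mathcal{L}ie$.

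First I would define the candidate map $\Phi_m : \alpha(J_d^{F_n})(m) \to \mathbf{A}_d(n,m)_{\mathbf{0}}$ by ``embedding'': send $\phi \otimes D$ to the $F_n$-beaded Jacobi diagram on $X_m$ (with no beads on $X_m$) obtained by placing the univalent vertex of $D$ labelled $k$ onto the arc $\phi(k)$ of $X_m$, in the position prescribed by the fiber ordering of $\phi$; the orientations, beads and trivalent vertices of $D$ are retained on the dashed part. The result has degree $d$ and, since all beads remain on the dashed part, lies in the $\mathbf{0}$-homotopy class.

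The heart of the argument is checking that $\Phi_m$ factors through the coend. The coend relation is generated by the morphisms $\mu_k^{j+1} \in Cat\mathcal{L}ie(j+1,j)$, whose image under $Cat\mathcal{L}ie \to Cat\mathcal{A}ss^u$ is the commutator $(s_k^{j+1}, k<j+1) - (s_k^{j+1}, j+1<k)$. Evaluating $\Phi_m$ on both sides of the identity $(\mu_k^{j+1})^*(\phi) \otimes D \sim \phi \otimes (\mu_k^{j+1})_*(D)$ yields, on the left, the difference of two Jacobi diagrams on $X_m$ differing only by swapping the order of two adjacent univalent vertices on a single arc; on the right, the Jacobi diagram obtained by merging these two vertices with a Y-tree as prescribed by the action of $\mu_k^{j+1}$ on $J_d^{F_n}$. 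These are equal in $\mathbf{A}_d(n,m)_{\mathbf{0}}$ by the STU relation. For the inverse $\Psi_m$, the classical correspondence (see \cite[Chapter 5]{CDM}) identifies degree-$d$ beaded Jacobi diagrams on arcs modulo STU, AS, IHX with beaded open Jacobi diagrams equipped with embedding data modulo AS and IHX; this says precisely that forgetting the arcs while recording, for each univalent vertex, which arc it lies on and its position along that arc, gives a two-sided inverse to $\Phi_m$.

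Finally, naturality in $\gr^{op}$ must be verified. Since $\gr$ is generated by $m_1,\ldots,m_5$ (Section \ref{gr-op}), it suffices to check naturality on each of these, tensored with identities. On $\mathbf{A}_d(n,-)_{\mathbf{0}}$, they act by post-composition with the corresponding morphisms $\eta, \mu, \epsilon, S, \Delta \in \mathbf{A}_0$, i.e.\ by a geometric cabling of the arcs of $X_m$. On $\alpha(J_d^{F_n})$, the action is given explicitly in \cite[Lemma A.2]{P-21-2} as combinatorial manipulations of the data $\phi \in Cat\mathcal{A}ss^u(i,-)$. The main obstacle lies here, especially for $\mu$ and $\Delta$: cabling can create beads on the arcs of $X_{m'}$, which must then be slid back to the dashed part via the beaded Jacobi diagram relations, and one must verify that this sliding produces precisely the prescription of \cite[Lemma A.2]{P-21-2}. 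Once established on each generator, naturality extends to all of $\gr$ by functoriality, completing the proof.
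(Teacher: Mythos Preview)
Your proposal is correct and follows essentially the same route as the paper's proof: both use the explicit description $\alpha = Cat\mathcal{A}ss^u \otimes_{Cat\mathcal{L}ie} (-)$, define the map by gluing the labelled univalent vertices onto the arcs according to the ordered fibers, verify the coend relation via STU, and check naturality on the generators $m_1,\ldots,m_5$ using \cite[Lemma A.2]{P-21-2}. The only cosmetic differences are that the paper checks naturality before the coend relation and concludes bijectivity from the $\Sigma$-level identification (\ref{alpha2}) rather than by invoking the classical open/arc correspondence as you do; also, your worry that cabling by $\mu$ or $\Delta$ might create beads on the arcs is unfounded in the $\mathbf{0}$-homotopy class, so that step is in fact lighter than you anticipate.
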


\begin{proof}
By definition, $Cat\mathcal{A}ss^u \underset{Cat\mathcal{L}ie}{\otimes } J_d^{F_n} $ is the coequalizer of the following diagram:
$$\xymatrix{
Cat\mathcal{A}ss^u \underset{\Sigma}{\otimes}  Cat\mathcal{L}ie \underset{\Sigma}{\otimes} J_d^{F_n}   \ar@<+0.5ex>[r]^-{L} \ar@<-0.5ex>[r]_-{R} &  Cat\mathcal{A}ss^u  \underset{\Sigma}{\otimes}   J_d^{F_n} 
}$$
where $L$ is defined using the functor $ Cat\mathcal{A}ss^u : (Cat\mathcal{L}ie )^{op}\to Func(\mathbb{K}[\gr^{op}]; \ev)$ and $R$ using the functor $J_d^{F_n} : Cat\mathcal{L}ie \to \ev$. More explicitly, $Cat\mathcal{A}ss^u \underset{Cat\mathcal{L}ie}{\otimes } J_d^{F_n} $ is the coequalizer of the following diagram:
$$\xymatrix{
\underset{k,i \in \nat}{\bigoplus} Cat\mathcal{A}ss^u(i,-) \underset{\mathfrak{S}_i}{\otimes}  Cat\mathcal{L}ie(k,i) \underset{\mathfrak{S}_k}{\otimes} J_d^{F_n}(k)   \ar@<+0.5ex>[r]^-{L} \ar@<-0.5ex>[r]_-{R} &  \underset{c \in \nat}{\bigoplus} Cat\mathcal{A}ss^u  (c,-) \underset{\mathfrak{S}_c}{\otimes}   J_d^{F_n} (c)
}$$
where $L$ is defined using the map $Cat\mathcal{A}ss^u(i,-) \underset{\mathfrak{S}_i}{\otimes}  Cat\mathcal{L}ie(k,i) \to Cat\mathcal{A}ss^u  (k,-)$  obtained using the functor $Cat\mathcal{L}ie \to Cat\mathcal{A}ss^u$  and $R$ is defined using the map $Cat\mathcal{L}ie(k,i) \underset{\mathfrak{S}_k}{\otimes} J_d^{F_n}(k) \to  J_d^{F_n}(i)$. 

Let $\mathcal{J}_d^{F_n}(c)$ be the set of $c$-labelled, $F_n$-beaded, open Jacobi diagrams.
For $l \in \nat$, we define a linear map
$${\underset{c \in \nat}{\bigoplus}} Cat\mathcal{A}ss^u  (c,l) \underset{\mathfrak{S}_c}{\otimes} \mathbb{K}[\mathcal{J}_d^{F_n}(c)] \xrightarrow{f_l} \mathbf{A}_d(n,-)_{\mathbf{0} }(l)$$
as follows: for $[\tilde{\alpha}]$ a generator  of $Cat\mathcal{A}ss^u  (c,l) $ represented by a set map $\alpha:c\to l$ and a given order on each of its fiber and $D$ a $c$-labelled, $F_n$-beaded, open Jacobi diagram, we define $f_l\left([\tilde{\alpha}] \otimes [D] \right)$
as being the Jacobi diagram on $X_l$ obtained by gluing the univalent vertices of $D$ labeled by the elements of $\alpha^{-1}(k)$ on the $k$-th component of $X_l$, respecting the order given on the fiber $\alpha^{-1}(k)$, for $1 \leq k\leq l$.

The map $f_l$ is well-defined with respect to the AS and IHX relations and so defines a linear map:
$${\underset{c \in \nat}{\bigoplus}} Cat\mathcal{A}ss^u  (c,l) \underset{\mathfrak{S}_c}{\otimes}{J}_d^{F_n}(c) \xrightarrow{f_l} \mathbf{A}_d(n,-)_{\mathbf{0} }(l)$$
which is compatible with the action of the symmetric group $\mathfrak{S}_l$.

By Section \ref{gr-op}, the PROP $\gr$ is generated by the permutations and the homomorphisms $m_i$ for $i\in \{1,2,3,4,5\}$. To prove that the linear maps $f_l$ define a natural transformation of functors on $\gr^{op}$, it is sufficient to prove the naturality for these five homomorphisms. Using the explicit description of the functor $Cat\mathcal{A}ss^u(c,-):  \gr^{op}  \to \ev$, for $c \in \nat$, given in \cite[Lemma A.2]{P-21-2} and the definition of the composition in the category $\mathbf{A}$ given in \cite{HM}, we obtain that the maps $f_l$  define a natural transformation:
$${\underset{c \in \nat}{\bigoplus}} Cat\mathcal{A}ss^u  (c,-) \underset{\mathfrak{S}_c}{\otimes}{J}_d^{F_n}(c) \xrightarrow{f} \mathbf{A}_d(n,-)_{\mathbf{0} }.$$

For example, for $m_5: F_2 \to F_1$, the induced map $Cat\mathcal{A}ss^u(c,1) \to Cat\mathcal{A}ss^u(c,2)$ sends a set map $f:c\to 1$ with an order of $c$ to the sum of all the maps $c\to 2$ obtained by shuffles and the map $\mathbf{A}_d(n,-)_{\mathbf{0}}(1) \to \mathbf{A}_d(n,-)_{\mathbf{0}}(2)$ sends a Jacobi diagram on $X_1$ to the sum of the Jacobi diagrams on $X_1$ and $X_2$ obtained by a shuffle of the univalent vertices. This corresponds in $\mathbf{A}$ to the box notation used to define the composition.

Since $Cat\mathcal{L}ie $ is generated by the morphisms $\mu_i^{c+1} \in Cat\mathcal{L}ie (c+1,c)$, to prove that $f \circ L=f\circ R$ it is sufficient to prove this relation on these generators. Let $[\epsilon]$ be a generator in $Cat\mathcal{A}ss^u(c,l)$ represented by a set map $\epsilon:c\to l$ and a given order on each of its fiber. We denote by $E_i$ the ordered fiber of $\epsilon(i)$ by $\epsilon$: explicitly we have:
$$E_i=\{a_1<\ldots< a_u<i<b_1<\ldots <b_v\}.$$ We consider the following ordered sets $E_{i<m+1}=\{a_1<\ldots< a_u<i<m+1<b_1<\ldots <b_v\}$ and $E_{m+1<i}=\{a_1<\ldots< a_u<m+1<i<b_1<\ldots <b_v\}$. 

We have:
$$L_l([\epsilon] \otimes [\mu_i^{c+1}] \otimes [D])=[(\epsilon \circ s_i^{c+1}, E_{i<m+1})] \otimes [D]-[(\epsilon \circ s_i^{c+1}, E_{m+1<i})] \otimes [D] $$
where $[(\epsilon \circ s_i^{c+1}, E_{i<m+1})]$ is the generator in $Cat\mathcal{A}ss^u(c,l)$ represented by the set map $\epsilon \circ s_i^{c+1}:c+1\to l$ and the order on the fibers over $j$, is the same that for $\epsilon$  for $j \not= i$ and is $E_{i<m+1}$ for $j=i$. The generator $[(\epsilon \circ s_i^{c+1}, E_{m+1<i})]$ is defined similarly.

We have:
$$R_l([\epsilon] \otimes [\mu_i^{c+1}] \otimes [D])=[\epsilon] \otimes 		\vcenter{\hbox{\begin{tikzpicture}[baseline=1.8ex,scale=0.4]
	\draw[thick, dotted]   (2,1) to (2,0);
	\draw[thick, dotted]   (1,2) to (2,1);
	\draw[thick, dotted] (2,1) to (3,2);
	\draw  (0,2) to (4,2);
	\draw  (0,2) to (0,4);
	\draw  (0,4) to (4,4);
	\draw  (4,2) to (4,4);
	\draw (1,2) node[below] {$\scriptstyle{i}$};
	\draw (3.3,2) node[below] {$\scriptstyle{m+1}$};
		\draw (2,0) node[below] {$\scriptstyle{i}$};
		\draw (2,4) node[below] {$D$};
		\end{tikzpicture}}}
	$$
	By the STU relation we obtain that:
	$$f_l \circ L_l([\epsilon] \otimes [\mu_i^{c+1}] \otimes [D])=f_l \circ R_l([\epsilon] \otimes [\mu_i^{c+1}] \otimes [D])$$
	and we deduce that $f_l$ define a natural transformation
	$$Cat\mathcal{A}ss^u \underset{Cat\mathcal{L}ie}{\otimes } J_d^{F_n} \to \mathbf{A}_d(n,-)_{\mathbf{0} }.$$
	By the isomorphism given in (\ref{alpha2}), we obtain that this natural transformation is a natural equivalence.
\end{proof}

\begin{cor} \label{cor-thm}
For $n,d \in \mathbb{N}$, we have an equivalence of functors in $\mathcal{F}_{\omega}(\gr^{op}; \mathbb{K})$:
$$\alpha((J_d^{F_n})_{\leq l}) \simeq \mathbf{A}_d^{2d-l}(n,-)_{\mathbf{0} }.$$
\end{cor}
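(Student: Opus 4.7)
The key bookkeeping observation is that an open Jacobi diagram of degree $d$ has exactly $2d$ vertices in total, so one with $k$ univalent vertices has exactly $2d-k$ trivalent vertices. Consequently $J_d^{F_n}(k)=0$ unless $k\leq 2d$, and the truncation $(J_d^{F_n})_{\leq l}$ selects precisely those open Jacobi diagrams of degree $d$ with at most $l$ univalent vertices, equivalently at least $2d-l$ trivalent vertices.

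First, I would apply the explicit formula \eqref{alpha2} to obtain
\[
\alpha\bigl((J_d^{F_n})_{\leq l}\bigr)(m)\;\simeq\;\bigoplus_{k\leq l}\mathbb{K}\mathbf{Fin}(k,m)\underset{\mathfrak{S}_k}{\otimes}J_d^{F_n}(k),
\]
which is naturally a sub-$\Sigma$-object of $\alpha(J_d^{F_n})(m)$. Next I would invoke the natural equivalence $f\colon\alpha(J_d^{F_n})\xrightarrow{\simeq}\mathbf{A}_d(n,-)_{\mathbf{0}}$ from Theorem \ref{thm-J_d} and track where this sub-object is sent. Recall from the construction that $f_m$ sends a generator $[\tilde{\alpha}]\otimes[D]$ (with $D\in\mathcal{J}_d^{F_n}(k)$ and $\tilde{\alpha}$ a set map $k\to m$ equipped with orderings on the fibres) to the beaded Jacobi diagram on $X_m$ obtained by gluing the univalent vertices of $D$ to the components of $X_m$ according to $\tilde{\alpha}$. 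Crucially, this gluing operation does not create or destroy trivalent vertices; hence, if $D$ has $k$ univalent vertices, its image has $2d-k$ trivalent vertices. For $k\leq l$ this number is $\geq 2d-l$, so $f$ restricts to a natural transformation
\[
\alpha\bigl((J_d^{F_n})_{\leq l}\bigr)\;\longrightarrow\;\mathbf{A}_d^{2d-l}(n,-)_{\mathbf{0}}.
\]

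To finish, I would verify that this restriction is an isomorphism by checking surjectivity onto the generators and then invoking the injectivity of $f$. Any generator of $\mathbf{A}_d^{2d-l}(n,m)_{\mathbf{0}}$ is (by definition) represented by an $F_n$-beaded Jacobi diagram on $X_m$ with no beads on $X_m$, of degree $d$, and with at least $2d-l$ trivalent vertices; it therefore has at most $l$ univalent vertices on $X_m$. Cutting the diagram apart at those univalent vertices produces an $F_n$-beaded open Jacobi diagram $D$ with $k\leq l$ univalent vertices, together with a set map $k\to m$ (with fibre orderings given by reading the arcs of $X_m$), so the diagram lies in the image of $f_m$ restricted to $\bigoplus_{k\leq l}\mathbb{K}\mathbf{Fin}(k,m)\otimes_{\mathfrak{S}_k}J_d^{F_n}(k)$. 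Since $f$ is an isomorphism by Theorem \ref{thm-J_d}, its restriction to a subfunctor is injective, and together with the surjectivity above we obtain the desired natural equivalence.

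I do not expect a serious obstacle: the argument is essentially a vertex-count bookkeeping on top of Theorem \ref{thm-J_d}. The only mild care required is to confirm that the decomposition of $\alpha(J_d^{F_n})(m)$ provided by \eqref{alpha2} is indeed compatible with the number-of-trivalent-vertices filtration on the target, which is immediate from the explicit description of $f_m$ as gluing along univalent vertices.
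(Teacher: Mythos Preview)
Your proof is correct and follows essentially the same approach as the paper: both rest on the vertex-count observation that a degree-$d$ open Jacobi diagram with at most $l$ univalent vertices has at least $2d-l$ trivalent vertices, and then invoke the equivalence of Theorem \ref{thm-J_d} to match the truncation subfunctor with the trivalent-vertex filtration. The paper's version is more terse, simply asserting that under the equivalence the subfunctor $(J_d^{F_n})_{\leq l}$ corresponds to $\mathbf{A}_d^{2d-l}(n,-)_{\mathbf{0}}$, while you spell out the gluing/cutting argument explicitly via (\ref{alpha2}) and the map $f_m$.
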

\begin{proof}
Since $((J_d^{F_n})_{\leq l})(i)=0$ for $i>l$, non-zero elements in $((J_d^{F_n})_{\leq l})(i)$ are open Jacobi diagrams, of degree $d$, having at most $l$ univalent vertices and so at least $2d-l$ trivalent vertices. So, by the equivalence of categories described in Theorem \ref{thm-J_d}, the subfunctor $(J_d^{F_n})_{\leq l}$ of $J_d^{F_n}$ corresponds to the subfunctor $\mathbf{A}_d^{2d-l}(n,-)_{\mathbf{0} }$ of $\mathbf{A}_d(n,-)_{\mathbf{0} }$.

\end{proof}

For $m=0$, since $\mathbf{A}_d(0,-)_{\mathbf{0} }=\mathbf{A}_d(0,-)$ it follows from Theorem \ref{thm-J_d} that  $\alpha(J_d^{\{1\}})=\mathbf{A}_d(0,-).$

In the rest of this section we will exploit the correspondance given in Theorem \ref{thm-J_d} in order to study the functors $\mathbf{A}_d(n,-)_{\mathbf{0} }$.

\subsection{On the polynomial filtration of the functors $\mathbf{A}_d(n,-)_{\mathbf{0} }$}

\begin{thm} \label{poly_1}
For $n \in \mathbb{N}$ and $d\geq 1$, the functor $\mathbf{A}_d(n,-)_{\mathbf{0} }: \gr^{op} \to \ev$ is polynomial of degree $2d$ and the filtration of $\mathbf{A}_d(n,-)_{\mathbf{0} }$ given in Proposition \ref{filtration2} corresponds to the polynomial filtration. In other words
$$ \mathbf{p}_{2d-t}(\mathbf{A}_d(n,-)_{\mathbf{0} })=\mathbf{A}^t_{d}(n, -)_{\mathbf{0}}.$$
\end{thm}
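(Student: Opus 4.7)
The plan is to transport the statement across the equivalence $\alpha: \f_{\mathcal{L}ie} \xrightarrow{\simeq} \f_\omega(\gr^{op}; \mathbb{K})$ and read it off as a simple statement about the $Cat\mathcal{L}ie$-module $J_d^{F_n}$. By Theorem \ref{thm-J_d}, $\alpha^{-1}(\mathbf{A}_d(n,-)_{\mathbf{0}}) \simeq J_d^{F_n}$, so it suffices to understand where the latter is supported and how its truncations by $(-)_{\leq l}$ correspond to the subfunctors $\mathbf{A}_d^{t}(n,-)_{\mathbf{0}}$.

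First I would establish that $J_d^{F_n}(k) = 0$ for $k > 2d$ and $J_d^{F_n}(2d) \neq 0$. By definition, a generator of $J_d^{F_n}(k)$ is a $k$-labelled, $F_n$-beaded open Jacobi diagram with $2d$ vertices in total; since all $k$ labelled vertices are univalent, one must have $k \leq 2d$. For non-vanishing at $k = 2d$, the disjoint union of $d$ chords (each consisting of two univalent vertices joined by an edge, no trivalent vertex), appropriately labelled by $\{1,\dots,2d\}$ and with trivial beads, represents a non-zero class (the AS/IHX relations involve trivalent vertices, so they do not annihilate it). Combined with the criterion recalled in Section \ref{CatLie} (a functor $F \in \f_\omega(\gr^{op}; \mathbb{K})$ has polynomial degree exactly $d$ iff $\alpha^{-1}(F)$ is supported in degrees $\leq d$ and is non-zero in degree $d$), this immediately yields that $\mathbf{A}_d(n,-)_{\mathbf{0}}$ is polynomial of degree exactly $2d$.

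For the identification of the filtrations, I would combine Corollary \ref{cor-thm} with (\ref{filtration-general}). On the one hand, (\ref{filtration-general}) gives
\begin{equation*}
\alpha^{-1}\bigl(\mathbf{p}_l(\mathbf{A}_d(n,-)_{\mathbf{0}})\bigr) \;=\; \bigl(\alpha^{-1}\mathbf{A}_d(n,-)_{\mathbf{0}}\bigr)_{\leq l} \;\simeq\; (J_d^{F_n})_{\leq l}.
\end{equation*}
On the other hand, Corollary \ref{cor-thm} identifies $\alpha((J_d^{F_n})_{\leq l})$ with $\mathbf{A}_d^{2d-l}(n,-)_{\mathbf{0}}$. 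Applying $\alpha$ to the first equation and using the equivalence of categories gives $\mathbf{p}_l(\mathbf{A}_d(n,-)_{\mathbf{0}}) \simeq \mathbf{A}_d^{2d-l}(n,-)_{\mathbf{0}}$, i.e.\ setting $t = 2d-l$, $\mathbf{p}_{2d-t}(\mathbf{A}_d(n,-)_{\mathbf{0}}) = \mathbf{A}_d^{t}(n,-)_{\mathbf{0}}$ as subfunctors of $\mathbf{A}_d(n,-)_{\mathbf{0}}$.

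I do not anticipate a serious obstacle: essentially all the work has been done in Theorem \ref{thm-J_d} and Corollary \ref{cor-thm}. The only point requiring care is to verify that the identification of the two filtrations is an equality of subfunctors, not merely an abstract isomorphism; this follows because Corollary \ref{cor-thm} is obtained by applying $\alpha$ to the honest inclusion $(J_d^{F_n})_{\leq l} \hookrightarrow J_d^{F_n}$, and the comparison map $\mathbf{p}_l(N) \hookrightarrow N$ is natural in $N$, so both subfunctors coincide inside $\mathbf{A}_d(n,-)_{\mathbf{0}}$.
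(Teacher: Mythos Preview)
Your proposal is correct and follows essentially the same route as the paper's proof: use Theorem \ref{thm-J_d} to identify $\alpha^{-1}(\mathbf{A}_d(n,-)_{\mathbf{0}})$ with $J_d^{F_n}$, observe that $J_d^{F_n}(k)=0$ for $k>2d$ and $J_d^{F_n}(2d)\neq 0$ to get the polynomial degree, and then combine (\ref{filtration-general}) with Corollary \ref{cor-thm} to match the truncation filtration with the polynomial filtration. Your additional remark that the identification is an equality of subfunctors (not just an abstract isomorphism) is a welcome clarification that the paper leaves implicit.
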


\begin{proof}
Since $J^{F_n}_d(2d+1)=0$ and $J^{F_n}_d(2d)\not= 0$, the functor $\alpha(J^{F_n}_d) \in \mathcal{F}_{\omega}(\gr^{op};  \mathbb{K})$ is a polynomial functor of degree $2d$, by Section \ref{CatLie}.

By (\ref{filtration-general}), Theorem \ref{thm-J_d} and Corollary \ref{cor-thm} we have: 
$$\alpha^{-1}(\mathbf{p}_i(\mathbf{A}_d(n,-)_{\mathbf{0} }))\simeq (\alpha^{-1} (\mathbf{A}_d(n,-))_{\mathbf{0} })_{\leq i}\simeq (J_d^{F_n})_{\leq i}\simeq \alpha^{-1}(\mathbf{A}_d^{2t-i}(n,-)_{\mathbf{0} }).$$

\end{proof}

\begin{rem}
The polynomiality of the functors $\mathbf{A}_d(n,-)_{\mathbf{0} }$ can be proved without using the equivalence of categories $\f_\omega(\gr^{op};\mathbb{K}) \simeq \f_{\mathcal{L}ie}$, using similar arguments as in the proof of Lemma \ref{lm-d>0}. As this seems instructive to us, we give this alternative proof below.

We want to prove that $\widetilde{cr}_{2d+1}(\mathbf{A}_d(n,-)_{\mathbf{0} })=0$. By Section \ref{rappels-poly}, $\widetilde{cr}_{2d+1}(\mathbf{A}_d(n,-)_{\mathbf{0} })(1,\ldots, 1)$ is the cokernel of the following homomorphism:
$$\overset{2d+1}{\underset{k=1}{\bigoplus}}\mathbf{A}_d(n,-)_{\mathbf{0} }(F_{2d}) \xrightarrow{(\mathbf{A}_d(n,-)_{\mathbf{0} }(r^{2d+1}_{\hat{1}}), \ldots, \mathbf{A}_d(n,-)_{\mathbf{0} }(r^{2d+1}_{\hat{2d+1}}))} \mathbf{A}_d(n,-)_{\mathbf{0} }(F_{2d+1}) $$
A generator of $\mathbf{A}_d(n,-)_{\mathbf{0} }(F_{2d+1})$ is represented by a $F_n$-beaded Jacobi diagram $D$ on $X_{2d+1}$ having $2d$ vertices and without bead on $X_{2d+1}$. Since the Jacobi diagram has $2d$ univalent vertices, at least one of the $(2d+1)$-arc components of  $X_{2d+1}$ has no univalent vertex. Assume that this is the case for the $k$-th arc of $X_{2d+1}$. Denote by $D_{\hat{k}}$ the generator of   $\mathbf{A}_d(n,-)_{\mathbf{0} }(F_{2d})$ obtained from $D$ by forgetting the $k$-th arc of $X_{2d+1}$, then:
 $$\mathbf{A}_d(n,-)_{\mathbf{0} }(r^{2d+1}_{\hat{k}})(D_{\hat{k}}) =D.$$
 We deduce that the cokernel of the previous map is zero.
\end{rem}

Since $\mathbf{A}_d(0,-)_{\mathbf{0}}=\mathbf{A}_d(0,-)$, as a special case we obtain the following result, originally due to Katada.
\begin{cor} \label{poly-Katada} \cite[Proposition 8.1]{Katada}.
For $d\geq 1$, the functor $\mathbf{A}_d(0,-): \gr^{op} \to \ev$ is polynomial of degree $2d$.
\end{cor}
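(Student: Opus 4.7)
The plan is to deduce this corollary directly from Theorem \ref{poly_1} by specialization to $n=0$. The key observation, already noted in the excerpt just below Corollary \ref{subfunctor-1}, is that when $n=0$ the free group $F_0=\{1\}$ is trivial, so any $F_0$-beaded Jacobi diagram on $X_m$ carries only trivial beads. In particular, the homotopy class $h(D)\colon F_m \to F_0$ of any such diagram is necessarily the zero homomorphism $\mathbf{0}$, which gives the identification $\mathbf{A}_d(0,-)_{\mathbf{0}} = \mathbf{A}_d(0,-)$.

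Given this identification, the conclusion is immediate: Theorem \ref{poly_1} asserts that for all $n\in\mathbb{N}$ and $d\geq 1$ the functor $\mathbf{A}_d(n,-)_{\mathbf{0}}\colon \gr^{op}\to \ev$ is polynomial of degree $2d$. Applying this with $n=0$ and using the identification above yields that $\mathbf{A}_d(0,-)$ is polynomial of degree $2d$, which is exactly the statement of the corollary.

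There is essentially no obstacle here: the entire work has been done in proving Theorem \ref{poly_1} (which in turn relied on Theorem \ref{thm-J_d} identifying $\alpha^{-1}(\mathbf{A}_d(n,-)_{\mathbf{0}})$ with $J^{F_n}_d$, together with the vanishing $J^{F_n}_d(k)=0$ for $k>2d$ and the non-vanishing $J^{F_n}_d(2d)\neq 0$, combined with the interpretation of polynomiality in $\f_{\mathcal{L}ie}$ from Section \ref{CatLie}). The corollary simply records the specialization to $n=0$, which recovers Katada's original result \cite[Proposition 8.1]{Katada}.
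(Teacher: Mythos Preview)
Your proof is correct and follows exactly the paper's approach: the paper simply states ``Since $\mathbf{A}_d(0,-)_{\mathbf{0}}=\mathbf{A}_d(0,-)$, as a special case we obtain the following result'' and invokes Theorem~\ref{poly_1} with $n=0$. Your additional explanation of why the identification holds and how Theorem~\ref{poly_1} was proved is accurate but more detailed than necessary.
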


\subsection{On the functors $\mathbf{A}_1(n,-)_{\mathbf{0} }$}
Recall that, in Section \ref{Action-S_2}, we define the functor $\mathcal{P}_2$ and study the action of $\mathfrak{S}_2$ on it.
\begin{prop} \label{A11}
For $n \in \nat$, we have a natural equivalence
$$\mathbf{A}_1(n,-)_{\mathbf{0} }\simeq \mathcal{P}_2^{\#} \underset{\mathfrak{S}_2}{\otimes} \mathbb{K}[F_n],$$
where the action of $\mathfrak{S}_2$ on $\mathbb{K}[F_n]$ is given by taking the inverse in $F_n$: $v\mapsto v^{-1}$ and the action of $\mathfrak{S}_2$ on $\mathcal{P}_2^{\#}$ is given in Section \ref{Action-S_2}.
In particular, we have $\mathbf{A}_1(0,-) \simeq S^2 \circ \A^{\#}$.
\end{prop}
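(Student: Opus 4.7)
The plan is to transport the statement across the equivalence $\alpha: \mathcal{F}_{Lie} \xrightarrow{\simeq} \mathcal{F}_\omega(\gr^{op}; \mathbb{K})$. Combining Theorem \ref{thm-J_d} with Corollary \ref{cor-Passi}, the proposition is equivalent to exhibiting a natural equivalence
$$Cat\mathcal{L}ie(2,-) \underset{\mathfrak{S}_2}{\otimes} \mathbb{K}[F_n] \simeq J_1^{F_n}$$
in $\mathcal{F}_{Lie}$. The combinatorial advantage of this formulation is that both sides are supported in arities $\leq 2$, which reduces the comparison to a finite check.

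First I would compute $J_1^{F_n}$ explicitly. A degree-$1$ open Jacobi diagram has exactly two vertices, so up to isomorphism there are only two combinatorial types: an edge whose two endpoints are univalent (contributing to $J_1^{F_n}(2)$) and a "lollipop" formed by a univalent vertex joined by an edge to a trivalent vertex carrying a self-loop (contributing to $J_1^{F_n}(1)$). Hence $J_1^{F_n}(m) = 0$ for $m \notin \{1,2\}$. Using the bead relations (\ref{relations-ouvert}) to normalise the orientation at $m=2$, the remaining datum is a single bead $v \in F_n$, so $J_1^{F_n}(2) \simeq \mathbb{K}[F_n]$; the $Cat\mathcal{L}ie$-action of the transposition swaps the two labels, reversing the orientation, so corresponds to $v \mapsto v^{-1}$. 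At $m=1$, every lollipop reduces via the bead relations to an element $L_v$ carrying no bead on the edge to the univalent vertex and a single bead $v \in F_n$ on the self-loop; the AS relation applied at the trivalent vertex swaps the two half-edges of the loop, and combined with the edge-reversal relation this yields $L_v = -L_{v^{-1}}$, whence $J_1^{F_n}(1) \simeq \mathbb{K}[F_n]/(v+v^{-1})$.

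Next I would construct the isomorphism. By the enriched Yoneda lemma, a morphism $f: Cat\mathcal{L}ie(2,-) \underset{\mathfrak{S}_2}{\otimes} \mathbb{K}[F_n] \to J_1^{F_n}$ is the datum of a $\mathfrak{S}_2$-equivariant map $\mathbb{K}[F_n] \to J_1^{F_n}(2)$. Taking the equivariant identification from the previous step, I would check that the induced $f_m$ is bijective for each $m$: at $m=2$ by construction, at $m=0$ and $m\geq 3$ because both sides vanish, and at $m=1$ because $Cat\mathcal{L}ie(2,1) = \mathcal{L}ie(2)$ is the sign representation, so the source evaluates to $\mathbb{K}[F_n]/(v+v^{-1})$ and $f_1$ sends the class of $v$ to $L_v$, identifying the two quotients. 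Applying $\alpha$ then gives the first claim. For the "in particular" assertion, set $n=0$: then $\mathbb{K}[F_0] = \mathbb{K}$ with trivial action and the statement reduces to $\mathbf{A}_1(0,-) \simeq (\mathcal{P}_2^{\#})_{\mathfrak{S}_2}$. Dualising the non-split sequence (\ref{ses-Passi}) and passing to $\mathfrak{S}_2$-coinvariants (a right-exact operation), the contribution of $\A^{\#}$ vanishes since, by Lemma \ref{action-Passi}, $\mathfrak{S}_2$ acts on it as $-\id$ and $2$ is invertible in $\mathbb{K}$; this leaves $(\mathcal{P}_2^{\#})_{\mathfrak{S}_2} \simeq ((\A^{\otimes 2})^{\#})_{\mathfrak{S}_2} = S^2 \A^{\#}$.

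The main obstacle is the description of $J_1^{F_n}(1)$. The bead relations (\ref{relations-ouvert}), in particular the triangle relation pushing a bead across the trivalent vertex, must be applied carefully to show that every lollipop reduces to some $L_v$ and that the sole additional relation is $L_v + L_{v^{-1}} = 0$ from AS; once this is established the rest of the argument is a straightforward translation through the equivalence of categories.
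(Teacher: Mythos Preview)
Your proof is correct and follows essentially the same route as the paper: reduce via Theorem~\ref{thm-J_d} and Corollary~\ref{cor-Passi} to a comparison of $J_1^{F_n}$ with $Cat\mathcal{L}ie(2,-)\underset{\mathfrak{S}_2}{\otimes}\mathbb{K}[F_n]$ in $\mathcal{F}_{Lie}$, compute both sides in arities $1$ and $2$, and match. Your use of enriched Yoneda to produce the map is exactly what underlies the paper's explicit $\rho_1,\rho_2$; the only genuine difference is the $n=0$ deduction, where you take $\mathfrak{S}_2$-coinvariants of the dualised sequence~(\ref{ses-Passi}) (using exactness of coinvariants in characteristic $0$) while the paper instead observes that $J_1^{F_0}$ becomes the atomic module $\mathbb{K}[2]$ and applies~(\ref{atomic}) directly --- both arguments are short and equivalent in strength.
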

The second part of the statement corresponds to a result of Katada given in \cite[Section 4]{Katada}. 

\begin{proof}[Proof of Proposition \ref{A11}]
By Theorem  \ref{thm-J_d}, the equivalence of categories $\alpha^{-1}: \f_\omega(\gr^{op};\mathbb{K}) \xrightarrow{\simeq} \f_{\mathcal{L}ie}$ and Corollary  \ref{cor-Passi}, the statement is equivalent to the existence of a natural equivalence:
$$J_1^{F_n}\simeq   Cat\mathcal{L}ie(2,-) \underset{\mathfrak{S}_2}{\otimes} \mathbb{K}[F_n].$$

The functor $J_1^{F_n}: Cat\mathcal{L}ie \to \ev$ is non-zero only on $1$ and $2$ and we have: 

$$J_1^{F_n}(2)=\mathbb{K}[ 
\vcenter{{\hbox{\begin{tikzpicture}[baseline=1.8ex,scale=0.4]
	\draw[thick,dotted] [->,>=latex]  (0,0) to (2,0);
	\draw (0,0) node[above] {$\scriptstyle{1}$};
	\draw (2,0) node[above] {$\scriptstyle{2}$};
	\draw (1,0) node[below] {$\scriptstyle{w}$};
		\draw[fill=white] (1,0) circle (4pt);
	\end{tikzpicture}}}}
] \simeq \mathbb{K}[F_n];$$
$$J_1^{F_n}(1)=\mathbb{K}[ 
\vcenter{{\hbox{\begin{tikzpicture}[baseline=1.8ex,scale=0.4]
	\draw[thick, dotted]  (0,0) to (0,1);
	\draw [thick, dotted]  (0,1) to [bend left=90]  (0,2);
	\draw[thick, dotted] [-To,>=latex]  (0,2) to   [bend left=25] (0.5,1.5);
	\draw [thick, dotted]  (0.5,1.5) to  [bend left=30]   (0,1);

	\draw (0,0) node[below] {$\scriptstyle{1}$};
		\draw (0,2) node[above] {$\scriptstyle{w}$};
		\draw[fill=white] (0,2) circle (4pt);
	\end{tikzpicture}}}}
\ ] /\langle 
\vcenter{{\hbox{\begin{tikzpicture}[baseline=1.8ex,scale=0.4]
	\draw[thick, dotted]  (0,0) to (0,1);
	\draw [thick, dotted]  (0,1) to [bend left=90]  (0,2);
	\draw[thick, dotted] [-To,>=latex]  (0,2) to   [bend left=25] (0.5,1.5);
	\draw [thick, dotted]  (0.5,1.5) to  [bend left=30]   (0,1);

	\draw (0,0) node[below] {$\scriptstyle{1}$};
		\draw (0,2) node[above] {$\scriptstyle{w}$};
		\draw[fill=white] (0,2) circle (4pt);
	\end{tikzpicture}}}}
	+
	\vcenter{{\hbox{\begin{tikzpicture}[baseline=1.8ex,scale=0.4]
	\draw[thick,  dotted]  (0,0) to (0,1);
\draw [thick, dotted]  (0,1) to [bend left=90]  (0,2);
	\draw[thick, dotted] [-To,>=latex]  (0,2) to   [bend left=25] (0.5,1.5);
	\draw [thick, dotted]  (0.5,1.5) to  [bend left=30]   (0,1);

	\draw (0,0) node[below] {$\scriptstyle{1}$};
		\draw (0,2) node[above] {$\scriptstyle{w}^{-1}$};
		\draw[fill=white] (0,2) circle (4pt);
	\end{tikzpicture}}}}
	\rangle
\simeq \mathbb{K}[F_n]/\langle [w]+[w^{-1}] \rangle;$$
 $\mathfrak{S}_2$ acts on $J_1^{F_n}(2)$ taking the inverse in $F_n$ and $J_1^{F_n}(\mu^2_1)([w])=\overline{[w]}$, for $w \in F_n$.

The functor $Cat\mathcal{L}ie(2,-): Cat\mathcal{L}ie \to \ev$ is non-zero only on $1$ and $2$ and we have:
 
$$Cat\mathcal{L}ie(2,2) \simeq  \mathbb{K}[\mathfrak{S}_2]\quad \text{and} \quad Cat\mathcal{L}ie(2,1) \simeq  \mathbb{K}[\mu^2_1];$$ 
and $Cat\mathcal{L}ie(2,-)(\mu^2_1)([\tau])=-[\mu^2_1]$ for $\tau$ the generator of $\mathfrak{S}_2$.

In order to define a natural transformation $\rho: Cat\mathcal{L}ie(2,-)\underset{\mathfrak{S}_2}{\otimes} \mathbb{K}[F_n] \to J_1^{F_n}$, we define the $\mathbb{K}$-linear maps:
\begin{itemize}
\item $\rho_2: \mathbb{K}[\mathfrak{S}_2]\underset{\mathfrak{S}_2}{\otimes} \mathbb{K}[F_n] \to \mathbb{K}[F_n]$ given by $\rho_2([\sigma] \otimes [w])=\sigma.[w]$;
\item $\rho_1: \mathbb{K}[\mu^2_1] \underset{\mathfrak{S}_2}{\otimes} \mathbb{K}[F_n] \to \mathbb{K}[F_n]/\langle [w]+[w^{-1}] \rangle$ given by $\rho_1([\mu^2_1] \otimes [w])=\overline{[w]}$.
\end{itemize}
Denoting by $C$ the functor $Cat\mathcal{L}ie(2,-)\underset{\mathfrak{S}_2}{\otimes} \mathbb{K}[F_n] $, we have:
$$\rho_2 \circ C(\tau)([\tau] \otimes [w])=\rho_2([\tau \circ \tau] \otimes [w])=\rho_2([Id]\otimes [w])=[w]$$
$$ \quad \text{and} \quad J_1^{F_n}(\tau) \circ \rho_2 ([\tau] \otimes [w])=J_1^{F_n}(\tau) (\tau.[w])=J_1^{F_n}(\tau) ([w^{-1}])=[w]$$
and
$$\rho_1 \circ C(\mu_1^2)([\tau]\otimes [w])=\rho_1(-[\mu_1^2] \otimes [w])=-\overline{[w]}$$
$$\quad \text{and} \quad J_1^{F_n}(\mu_1^2) \circ \rho_2([\tau]\otimes [w])=J_1^{F_n}(\mu_1^2) (\tau.[w])=J_1^{F_n}(\mu_1^2)([w^{-1}])=\overline{[w^{-1}]}=-\overline{[w]}.$$
By similar computations on the generators $[Id] \otimes [w]$, we obtain that $\rho_1$ and $\rho_2$ satisfy the two relations
$$\rho_2 \circ C(\tau)= J_1^{F_n}(\tau) \circ \rho_2 \qquad \text{and} \qquad \rho_1 \circ C(\mu_1^2)=J_1^{F_n}(\mu_1^2) \circ \rho_2$$
and so define a natural transformation.

Since $\rho_1$ and $\rho_2$ are isomorphisms, $\rho$ is a natural equivalence.

For $n=0$, $ \mathbb{K}[F_0]/\langle [w]+[w^{-1}] \rangle =0$, so the functor $Cat\mathcal{L}ie(2,-)\underset{\mathfrak{S}_2}{\otimes} \mathbb{K}[F_n] $ is the atomic functor $\mathbb{K}[2]$ and by (\ref{atomic}) we have:
$$ \alpha(J_1^{F_0}) = \alpha(\mathbb{K}[2]) \simeq (\A^{\#})^{\otimes 2} \underset{\mathfrak{S}_2}{\otimes} \mathbb{K}\simeq ( (\A^{\#})^{\otimes 2} )^{\mathfrak{S}_2} \simeq S^2 \circ \A^{\#}.$$

\end{proof}

\subsection{Outer property of the functors $\mathbf{A}_d(n,-)_{\mathbf{0} }$}

For $d \in \nat$, in \cite[Theorem 5.1]{Katada}, Katada proves that $\mathbf{A}_d(0,-)$ is an outer functor, namely inner automorphisms of $F_m$ act trivially on $\mathbf{A}_d(0,m)$. Her proof is based on properties of the composition in the category $\mathbf{A}$, especially properties of the box operator. In Theorem \ref{Thm-outre}, we study the outer property of the functors $\mathbf{A}_d(n,-)_{\mathbf{0} }$ using the equivalence of categories $ \f_\omega(\gr^{op};\mathbb{K}) \simeq  \f_{\mathcal{L}ie}$. For $n=0$, this gives another proof of Katada's result.

\begin{thm} \label{Thm-outre}
For $d,n \in \nat$, the functor $\mathbf{A}_d(n,-)_{\mathbf{0} }$ is an outer functor iff $n=0$ or $d=0$.
\end{thm}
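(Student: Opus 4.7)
The plan is to combine Theorem \ref{thm-J_d}, which gives $\alpha^{-1}(\mathbf{A}_d(n,-)_{\mathbf{0}}) \simeq J_d^{F_n}$, with Powell's characterization recalled in Section \ref{Rappels-G}: under the equivalence $\alpha^{-1}$, outer functors correspond to $Cat\mathcal{L}ie$-modules $F$ with $\mu_F = 0$, equivalently $\sum_{i=1}^{k} F(\mu_i^{k+1}) = 0$ for every $k \geq 0$. The statement thus becomes: the sums $\sum_{i=1}^{k} J_d^{F_n}(\mu_i^{k+1})$ vanish identically iff $n=0$ or $d=0$. The ``if'' direction is quick: if $d=0$ then $J_0^{F_n}$ is concentrated in arity $0$ (a degree-$0$ open Jacobi diagram has no vertices at all), so every outgoing map vanishes trivially; if $n=0$ then $\mathbf{A}_d(0,-)_{\mathbf{0}} = \mathbf{A}_d(0,-)$ and I will cite \cite[Theorem 5.1]{Katada}.

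For the ``only if'' direction, suppose $d \geq 1$ and $n \geq 1$. I plan to exhibit $D \in J_d^{F_n}$ with $\sum_i \mu_i^{k+1}(D) \neq 0$. In the case $d=1$, the computation in the proof of Proposition \ref{A11} already shows that $\mu_1^2$ sends the beaded interval $I_w \in J_1^{F_n}(2)$ (for $w \in F_n$) to its class $\overline{[w]} \in J_1^{F_n}(1) \simeq \mathbb{K}[F_n]/\langle [w]+[w^{-1}] \rangle$; taking $w = x_1$, which has infinite order and is not equal to its inverse in $F_n$ (using $n \geq 1$), gives a non-zero image, so $D = I_{x_1}$ does the job.

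For $d \geq 2$, I take $D := I_{x_1} \sqcup T$, where $I_{x_1}$ is the interval with bead $x_1$ whose univalent vertices are labelled $2d-1$ and $2d$, and $T \in J_{d-1}^{F_n}(2d-2)$ is the disjoint union of $d-1$ un-beaded intervals on vertices $\{1,\ldots,2d-2\}$, which is non-zero. Among the summands of $\sum_{i=1}^{2d-1} \mu_i^{2d}(D)$, the term at $i = 2d-1$ creates a self-loop inside the $I_{x_1}$ component (as in the $d=1$ calculation), yielding the disconnected diagram $L(x_1) \sqcup T \in J_d^{F_n}(2d-1)$, where $L(x_1)$ is the loop diagram of Proposition \ref{A11}; each of the remaining terms, for $i<2d-1$, instead merges $I_{x_1}$ with the interval of $T$ that contains vertex $i$ through a new trivalent vertex, and so has strictly fewer connected components. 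Invoking the fact that AS and IHX preserve the number of connected components of a Jacobi diagram (see next paragraph), the disconnected term lies in a subspace of $J_d^{F_n}(2d-1)$ linearly disjoint from the subspace spanned by the merging terms. Since $L(x_1) \in J_1^{F_n}(1)$ and $T \in J_{d-1}^{F_n}(2d-2)$ are both non-zero, so is $L(x_1)\sqcup T$; hence the full sum is non-zero and $\mathbf{A}_d(n,-)_{\mathbf{0}}$ fails to be outer.

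The main technical point I expect to need care is justifying that the number of connected components gives a well-defined grading on $J_d^{F_n}$, i.e.\ is preserved by AS and IHX. For AS this is immediate: the relation is local at a single trivalent vertex and preserves all edges. For IHX the three terms I, H, X share the same four external legs, and in each of the three local configurations those four legs lie in a single connected piece (two trivalent vertices joined by one edge); the global connected-component count of any larger diagram containing such an IHX triple is therefore the same in all three terms. This is the only ingredient beyond the direct gluing computations, and it is what makes the linear-independence step rigorous.
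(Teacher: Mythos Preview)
Your overall strategy coincides with the paper's: translate via Theorem \ref{thm-J_d} to $J_d^{F_n}$ and test Powell's criterion $\mu_F=0$ from Section \ref{Rappels-G}. Two tactical differences are worth flagging. First, for the direction $n=0$ the paper does \emph{not} cite Katada but proves directly, as Proposition \ref{Prop-outre}, that $J_d^{F_0}\in\mathcal{F}_{Lie}^{\mu}$: on $J_d(2d)$ (disjoint unions of unbeaded intervals) one has $\mu_{i_\alpha}^{2d}(D)+\mu_{j_\alpha}^{2d}(D)=0$ by AS for each paired $\{i_\alpha,j_\alpha\}$ and $\mu_\beta^{2d}(D)=0$ (unbeaded lollipop), and the lower arities follow from the surjectivity of $Cat\mathcal{L}ie(2d,k+1)\otimes J_d(2d)\to J_d(k+1)$ together with naturality of $\mu$; obtaining Katada's theorem this way is one of the points of the paper, so your citation sidesteps that contribution. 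Second, for $d>1$, $n\geq 1$ the paper takes $D$ to be $d$ \emph{beaded} intervals with labels paired as $\{\alpha,d+\alpha\}$, isolates the two summands whose tripod carries the label set $\{1,d,d+1\}$ (namely $\mu_1^{2d}(D)$ and $\mu_{d+1}^{2d}(D)$), and checks by bead arithmetic that they do not cancel when $w_1\neq 1$. Your lollipop-isolation via the component count is a clean alternative; just note that the step ``$L(x_1)\neq 0$ and $T\neq 0$ imply $L(x_1)\sqcup T\neq 0$'' needs the finer tensor decomposition of $J_d^{F_n}(2d-1)$ over \emph{partitions of the label set} (so that the relevant graded piece is $J_1^{F_n}(1)\otimes\mathbb{K}[F_n]^{\otimes(d-1)}$), not merely the grading by number of components---though this refinement follows from the very same locality argument you sketch for AS and IHX.
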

By Theorem \ref{thm-J_d} and Section \ref{Rappels-G}, $\mathbf{A}_d(n,-)_{\mathbf{0} }$ is an outer functor iff $J_d^{F_n}$ belongs to $\mathcal{F}_{Lie}^{\mu}$. 

The proof of this theorem relies on the following result.
\begin{prop}\label{Prop-outre}  
The functor $J_d^{F_0}$ belongs to $\mathcal{F}_{Lie}^{\mu}$. 
\end{prop}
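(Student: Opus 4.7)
The plan is to prove directly that $\sum_{i=1}^n \mu_i^{n+1}(D) = 0$ in $J_d^{F_0}(n)$ for every open Jacobi diagram $D$ representing an element of $J_d^{F_0}(n+1)$. Since $F_0 = \{1\}$ all beads are trivial, so the relevant diagrams reduce to ordinary (unbeaded) open Jacobi diagrams with labelled univalent vertices modulo AS and IHX. By the description of the $Cat\mathcal{L}ie$-action given in the proof of Proposition \ref{Foncteurs-J_d}, $\mu_i^{n+1}(D)$ is the diagram obtained from $D$ by gluing a Y-tree along the univalent vertices labelled $i$ and $n+1$ and labelling its third leaf by $i$; equivalently, one merges the incident edges of $v_i$ and $v_{n+1}$ at a new trivalent vertex $c_i$ that also carries a fresh univalent leaf labelled $i$.

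The first step is the auxiliary observation that any open Jacobi diagram in $J_d^{F_0}$ containing a self-loop (an edge from a trivalent vertex to itself) is zero. Indeed, since the bead is trivial, the edge-reversal rule in (\ref{relations-ouvert}) identifies a self-loop with its orientation reversal without any sign; on the other hand, this reversal coincides with swapping the two half-edges of the loop at its supporting trivalent vertex, which by AS produces a factor $-1$. Characteristic zero then forces the diagram to vanish. As a first consequence, the ``lollipop'' diagram appearing in the proof of Proposition \ref{A11} is zero, which immediately disposes of the term $\mu_j^{n+1}(D)$ whenever $v_{n+1}$ is joined by an edge to another univalent vertex $v_j$ of $D$.

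The core step is the vanishing of $\sum_i \mu_i^{n+1}(D)$, proved by an IHX telescoping argument. Each summand grafts the edge at $v_{n+1}$, viewed as a stub, onto $v_i$ via a new trivalent vertex bearing a new leaf labelled $i$. The IHX relation, applied at any trivalent vertex of $D$ involved in such a grafting, rewrites an attachment as a signed sum of attachments at the two other edges incident to that vertex. Iterating this ``sliding'' operation, $\sum_i \mu_i^{n+1}(D)$ is rewritten as a signed sum whose terms cancel pairwise, whenever the stub reaches the same edge along two IHX-routes, or carry a self-loop, whenever the stub is attached where it has already been; the latter vanish by the first step. A small instance of this mechanism appears when $D$ is a Y-tree with leaves $\{1,2,3\}$: the sum $\mu_1^3(D) + \mu_2^3(D)$ equals twice the ``theta with two tails'' diagram, and a single IHX move on one of the parallel theta edges rewrites this as an expression whose only non-trivial summand carries a self-loop.

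The main obstacle I anticipate is the bookkeeping required to make the telescoping combinatorially rigorous: one needs to track the AS signs produced during iterated IHX applications and verify that every surviving term either belongs to a cancelling pair or carries a self-loop. A natural organising principle is induction on the number of trivalent vertices, or the first Betti number, of $D$, with base case a disjoint union of isolated edges and Y-trees handled directly by the above analysis, and inductive step performed by peeling off a trivalent vertex adjacent to some leaf of $D$ via a local IHX application and invoking the hypothesis on the smaller diagrams that result.
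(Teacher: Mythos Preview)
Your approach differs substantially from the paper's. Rather than attacking $(\mu_{J_d})_k = 0$ directly for every $k$ via IHX manipulations, the paper observes that it suffices to check the single top case $k = 2d-1$: there the generators of $J_d(2d)$ are disjoint unions of $d$ strands, and one verifies by inspection that for a strand with endpoints labelled $i_\alpha, j_\alpha$ the two terms $\mu_{i_\alpha}^{2d}(D)$ and $\mu_{j_\alpha}^{2d}(D)$ cancel by a single AS relation, while the term gluing $v_{2d}$ to the other end of its own strand is a lollipop. For $k < 2d-1$ the paper then argues by downward induction: any generator of $J_d(k+1)$ has a leaf adjacent to a trivalent vertex, so it lies in the image of some $J_d(\mu_i^{k+2})$; the naturality square for the transformation $\mu$ then transports the vanishing from level $k+1$ down to level $k$. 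No IHX telescoping, and no self-loop analysis beyond the lollipop, is needed.

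Your direct route has a genuine gap. The ``IHX telescoping'' you invoke is not the standard link relation: that relation concerns the sum of attachments of a free stub to \emph{all} edges of a connected piece, which vanishes because the three contributions at each trivalent vertex cancel by IHX. Here you are summing only over the $n$ external legs $v_1,\dots,v_n$, and the internal-edge terms that would appear during any sliding have nothing to cancel against. Your worked example with the Y-tree does not actually close: the IHX move you describe yields $\Theta = \Theta' - (\text{self-loop})$, where $\Theta'$ is the same underlying theta-with-two-tails graph but with a possibly different cyclic order at the two trivalent vertices, and you have not checked the sign relating $\Theta$ and $\Theta'$. More seriously, the inductive organising principle you sketch---peeling off a trivalent vertex adjacent to a leaf---is precisely the paper's surjectivity step in disguise, but the reason the inductive hypothesis then applies is the \emph{naturality} of $\mu_{J_d}$ with respect to morphisms of $Cat\mathcal{L}ie$, which you have not identified; without it the induction does not close. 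I would recommend dropping the telescoping and instead reducing to the disjoint-strand case via naturality, as the paper does.
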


\begin{proof}
For simplicity, the functor $J_d^{F_0}$ is denoted by $J_d$ below.\\
For $k \in \mathbb{N}$, the natural transformation $(\mu_{J_d}): \tau J_d \to J_d$ gives maps
$$(\mu_{J_d})_{k}: \tau J_d(k)=J_d(k+1) \to J_d(k).$$

\begin{itemize}
\item For $k\geq 2d$, $J_d(k+1)=0$ so $(\mu_{J_d})_{k}=0$.
\item For $k=2d-1$, the generators of $J_d(2d)$ are Jacobi diagrams of the form 
\[
D:=	\vcenter{\hbox{\begin{tikzpicture}[baseline=1.8ex,scale=0.4]
	\draw[thick, dotted]   (0,3) to (0,0);
	\draw[thick, dotted]  (2,3) to (2,0);
	\draw[thick, dotted]  (6,3) to (6,0);
	\draw[thick, dotted]  (8,3) to (8,0);
		\draw (4,2) node[below] {$\scriptstyle{\ldots}$};
		\draw (0,0) node[below] {$\scriptstyle{j}_1$};
	\draw (6,0) node[below] {$\scriptstyle{j_{d-1}}$};
		\draw (6,3) node[above] {$\scriptstyle{i_{d-1}}$};
		\draw (0,3) node[above] {$\scriptstyle{i}_1$};
		\draw (2,0) node[below] {$\scriptstyle{j_2}$};
		\draw (2,3) node[above] {$\scriptstyle{i_2}$};
		\draw (8,3) node[above] {$\scriptstyle{\beta}$};
		\draw (8,0) node[below] {$\scriptstyle{2d}$};
		\end{tikzpicture}}}
		\]
		For $d=1$, we have
 $$(\mu_{J_1})_{1}(D)=(\mu_{J_1})_{1}(
 \vcenter{{\hbox{\begin{tikzpicture}[baseline=1.8ex,scale=0.4]
	\draw[thick, dotted]  (0,0) to (2,0);
	\draw (0,0) node[above] {$\scriptstyle{1}$};
	\draw (2,0) node[above] {$\scriptstyle{2}$};
	
	\end{tikzpicture}}}})
	=
	\vcenter{{\hbox{\begin{tikzpicture}[baseline=1.8ex,scale=0.4]
	\draw[thick, dotted]  (0,0) to (0,1);
	\draw[thick, dotted]   (0,1) to [bend left=90]  (0,2);
	\draw[thick, dotted]  (0,2) to [bend left=90]   (0,1);
	\draw (0,0) node[below] {$\scriptstyle{1}$};
	\end{tikzpicture}}}} 
		=0 \text{\ by\ the\ AS\ relation}.$$

For $d>1$, and $\alpha \in \{1, \ldots, d-1\}$
\[
\mu_{i_\alpha}^{2d}(D)=	\vcenter{\hbox{\begin{tikzpicture}[baseline=1.8ex,scale=0.4]
	\draw[thick, dotted]   (2,3) to (2,0);
	\draw[thick, dotted]   (6,3) to (6,0);
	\draw[thick, dotted] (-1,1) to (-2,3);
	\draw[thick, dotted]   (-1,1) to (0,3);
	\draw[thick, dotted]   (-1,1) to (-1,0);
	\draw (-1,0) node[below] {$\scriptstyle{i_\alpha}$};
	\draw (4,2) node[below] {$\scriptstyle{\ldots}$};
	\draw (-2,3) node[above] {$\scriptstyle{j_\alpha}$};
	\draw (0,3) node[above] {$\scriptstyle{\beta}$};
		\end{tikzpicture}}}
	\qquad 	\text{\ and\ } \qquad 
\mu_{j_\alpha}^{2d}(D)=	\vcenter{\hbox{\begin{tikzpicture}[baseline=1.8ex,scale=0.4]
	\draw[thick, dotted]   (2,3) to (2,0);
	\draw[thick, dotted]   (6,3) to (6,0);
	\draw[thick, dotted] (-1,1) to (-2,3);
	\draw[thick, dotted]   (-1,1) to (0,3);
	\draw [thick, dotted]  (-1,1) to (-1,0);
	\draw (-1,0) node[below] {$\scriptstyle{j_\alpha}$};
	\draw (4,2) node[below] {$\scriptstyle{\ldots}$};
	\draw (-2,3) node[above] {$\scriptstyle{i_\alpha}$};
	\draw (0,3) node[above] {$\scriptstyle{\beta}$};
		\end{tikzpicture}}}
	\]

Using the AS relation we have $\mu_{i_\alpha}^{2d}(D)+\mu_{j_\alpha}^{2d}(D)=0$ and
$\mu_{\beta}^{2d}(D)=0$. We deduce that $(\mu_{J_d})_{2d-1}=0$.
\item For $k<2d-1$. By functoriality of $J_d$ on $Cat\mathcal{L}ie$, we have morphisms
\begin{equation} \label{surj-J}
Cat\mathcal{L}ie(k+2,k+1) \otimes J_d(k+2) \to J_d(k+1)
\end{equation}
We will prove that these maps are surjective.

Let $D$ be a generator of $J_d(k+1)$; since $k+1<2d$, $D$ has at least one connected component which is not of the form $  \vcenter{{\hbox{\begin{tikzpicture}[baseline=1.8ex,scale=0.4]
	\draw[thick, dotted]  (0,1) to (2,1);
		\end{tikzpicture}}}}$. We can chose one of these connected components having the form 
	\[	
		\vcenter{\hbox{\begin{tikzpicture}[baseline=1.8ex,scale=0.4]
	\draw[thick, dotted]   (2,1) to (2,0);
	\draw[thick, dotted]   (1,2) to (2,1);
	\draw[thick, dotted] (2,1) to (3,2);
	\draw  (0,2) to (4,2);
	\draw  (0,2) to (0,4);
	\draw  (0,4) to (4,4);
	\draw  (4,2) to (4,4);
		\draw (2,0) node[below] {$\scriptstyle{i}$};
		\draw (2,4) node[below] {$D'$};
		\end{tikzpicture}}}
	\]
	where $i \in \{1, \ldots, k+1\}$ and $D'$ is a Jacobi diagram.
The generator $D$ is obtained by applying $\mu_i^{k+2}$ to the generator of $J_d(k+2)$ obtained from $D$ replacing the previous connected component by \[	
		\vcenter{\hbox{\begin{tikzpicture}[baseline=1.8ex,scale=0.4]
	\draw[thick, dotted]   (1,2) to (1,1);
	\draw[thick, dotted] (3,1) to (3,2);
	\draw  (0,2) to (4,2);
	\draw  (0,2) to (0,4);
	\draw  (0,4) to (4,4);
	\draw  (4,2) to (4,4);
		\draw (1,1) node[below] {$\scriptstyle{i}$};
		\draw (3,1) node[below] {$\scriptstyle{k+2}$};
		\draw (2,4) node[below] {$D'$};
		\end{tikzpicture}}}
	\]
	(which could be non-connected).
	By iteration we obtain that the morphism
	$$Cat\mathcal{L}ie(2d,k+1) \otimes J_d(2d) \to J_d(k+1)$$
	is surjective. 
	By the naturality of $\mu_{J_d}$ and the fact that $(\mu_{J_d})_{2d-1}=0$, we deduce that $(\mu_{J_d})_{k}=0$.	

\end{itemize}

\end{proof}

\begin{proof}[Proof of Theorem  \ref{Thm-outre}]
For $d=0$, by (\ref{A_0}) $\mathbf{A}_0(n,-)_{\mathbf{0} } \simeq \mathbb{K}$ which  is obviously an outer functor. 

For $n=0$, $\mathbf{A}_d(0,-)_{\mathbf{0} }$ is an outer functor by Proposition \ref{Prop-outre}.

If $n\not=0$ and $d \geq 1$,  we prove that $J_d^{F_n}$ does not belong to $\mathcal{F}_{Lie}^{\mu}$. 
The natural transformation $(\mu_{J_d^{F_n}}): \tau J_d^{F_n} \to J_d^{F_n}$ gives map
$$(\mu_{J_d^{F_n}})_{2d-1}: \tau J_d^{F_n}(2d-1)=J_d^{F_n}(2d) \to J_d^{F_n}(2d-1).$$
Consider the following generator of $J_d^{F_n}(2d)$
\[
D:=	\vcenter{\hbox{\begin{tikzpicture}[baseline=1.8ex,scale=0.4]
	\draw[thick, dotted] [->,>=latex]  (0,3) to (0,0);
	\draw[thick, dotted] [->,>=latex]  (2,3) to (2,0);
	\draw[thick, dotted] [->,>=latex]  (6,3) to (6,0);
	\draw (0,0) node[below] {$\scriptstyle{d+1}$};
	\draw (2,0) node[below] {$\scriptstyle{d+2}$};
	\draw (6,0) node[below] {$\scriptstyle{2d}$};
	\draw (4,2) node[below] {$\scriptstyle{\ldots}$};
	\draw (0,3) node[above] {$\scriptstyle{1}$};
	\draw (2,3) node[above] {$\scriptstyle{2}$};
	\draw (6,3) node[above] {$\scriptstyle{d}$};
	\draw (6,1) node[right] {$\scriptstyle{w_d}$};
	\draw (2,1) node[right] {$\scriptstyle{w_2}$};
	\draw (0,1) node[right] {$\scriptstyle{w_1}$};
		\draw[fill=white] (0,1) circle (4pt);
		\draw[fill=white] (2,1) circle (4pt);
		\draw[fill=white] (6,1) circle (4pt);
	\end{tikzpicture}}}
		\]
where $w_1, \ldots, w_d \in F_n$.

For $d=1$, we have
 $$(\mu_{J_1^{F_n}})_{1}(D)=(\mu_{J_1^{F_n}})_{1}(
 \vcenter{{\hbox{\begin{tikzpicture}[baseline=1.8ex,scale=0.4]
	\draw[thick, dotted] [-To,>=latex]  (0,0) to (2,0);
	\draw (0,0) node[above] {$\scriptstyle{1}$};
	\draw (2,0) node[above] {$\scriptstyle{2}$};
	\draw (1,0) node[below] {$\scriptstyle{w_1}$};
		\draw[fill=white] (1,0) circle (4pt);
	\end{tikzpicture}}}})
	=
	\vcenter{{\hbox{\begin{tikzpicture}[baseline=1.8ex,scale=0.4]
	\draw[thick, dotted]  (0,0) to (0,1);
\draw [thick, dotted]  (0,1) to [bend left=90]  (0,2);
	\draw[thick, dotted] [-To,>=latex]  (0,2) to   [bend left=25] (0.5,1.5);
	\draw [thick, dotted]  (0.5,1.5) to  [bend left=30]   (0,1);
	\draw (0,0) node[below] {$\scriptstyle{1}$};
		\draw (0,2) node[above] {$\scriptstyle{w}_1$};
		\draw[fill=white] (0,2) circle (4pt);
	\end{tikzpicture}}}} 
		.$$
Since $n\geq 1$, for $w_1\not=1 \in F_n$, $(\mu_{J_1^{F_n}})_{1}(D)\not=0$, so $J_1^{F_n}$ is not an outer $Cat\mathcal{L}ie$-module.
		
For $d>1$,
 $(\mu_{J_d^{F_n}})_{2d-1}(D)$ is a sum of Jacobi diagrams of the form 
\[
\vcenter{\hbox{\begin{tikzpicture}[baseline=1.8ex,scale=0.4]
	\draw[thick, dotted]  (0,3) to (0,0);
	\draw [thick, dotted] (2,3) to (2,0);
	\draw [thick, dotted]  (6,3) to (6,0);
	\draw [thick, dotted]  (-3,3) to (-2,2);
	\draw [thick, dotted]  (-1,3) to (-2,2);
	\draw[thick, dotted]   (-2,0) to (-2,2);
	\draw (4,2) node[below] {$\scriptstyle{\ldots}$};
		\end{tikzpicture}}}
		\]
		with a $2d$-labelling and beads in $F_n$ on edges. In this sum there are exactly two summands where the tree 
		${\hbox{\begin{tikzpicture}[baseline=1.8ex,scale=0.4]
	\draw[thick, dotted]   (-3,3) to (-2,2);
	\draw[thick, dotted]   (-1,3) to (-2,2);
	\draw[thick, dotted]   (-2,0) to (-2,2);
		\end{tikzpicture}}}$ is labelled by the set $\{1,d,d+1\}$:
	\[
\mu_1^{2d}(D)=	\vcenter{\hbox{\begin{tikzpicture}[baseline=1.8ex,scale=0.4]
	\draw[thick, dotted] [->,>=latex]  (2,3) to (2,0);
	\draw[thick, dotted] [->,>=latex]  (6,3) to (6,0);
	\draw[thick, dotted] [->,>=latex]  (-1,1) to (-2,3);
	\draw[thick, dotted] [->,>=latex]  (-1,1) to (0,3);
	\draw [thick, dotted]  (-1,1) to (-1,0);
	\draw (-1,0) node[below] {$\scriptstyle{1}$};
	\draw (2,0) node[below] {$\scriptstyle{d+2}$};
	\draw (6,0) node[below] {$\scriptstyle{2d-1}$};
	\draw (4,2) node[below] {$\scriptstyle{\ldots}$};
	\draw (2,3) node[above] {$\scriptstyle{2}$};
	\draw (-2,3) node[above] {$\scriptstyle{d+1}$};
	\draw (0,3) node[above] {$\scriptstyle{d}$};
	\draw (6,3) node[above] {$\scriptstyle{d-1}$};
	\draw (6,1) node[right] {$\scriptstyle{w_{d-1}}$};
	\draw (2,1) node[right] {$\scriptstyle{w_2}$};
	\draw (-1.5,2) node[left] {$\scriptstyle{w_1}$};
	\draw (-0.5,2) node[right] {$\scriptstyle{w_d^{-1}}$};
		\draw[fill=white] (2,1) circle (4pt);
		\draw[fill=white] (6,1) circle (4pt);
		\draw[fill=white] (-0.5,2) circle (4pt);
		\draw[fill=white] (-1.5,2) circle (4pt);
	\end{tikzpicture}
	=
	- \begin{tikzpicture}[baseline=1.8ex,scale=0.4]
	\draw[thick, dotted] [->,>=latex]  (2,3) to (2,0);
	\draw[thick, dotted] [->,>=latex]  (6,3) to (6,0);
	\draw[thick, dotted] [->,>=latex]  (-1,1) to (-2,3);
	\draw[thick, dotted] [->,>=latex]  (-1,1) to (0,3);
	\draw [thick, dotted]  (-1,1) to (-1,0);
	\draw (-1,0) node[below] {$\scriptstyle{1}$};
	\draw (2,0) node[below] {$\scriptstyle{d+2}$};
	\draw (6,0) node[below] {$\scriptstyle{2d-1}$};
	\draw (4,2) node[below] {$\scriptstyle{\ldots}$};
	\draw (2,3) node[above] {$\scriptstyle{2}$};
	\draw (-2,3) node[above] {$\scriptstyle{d}$};
	\draw (0,3) node[above] {$\scriptstyle{d+1}$};
	\draw (6,3) node[above] {$\scriptstyle{d-1}$};
	\draw (6,1) node[right] {$\scriptstyle{w_{d-1}}$};
	\draw (2,1) node[right] {$\scriptstyle{w_2}$};
	\draw (-1.5,2) node[left] {$\scriptstyle{w_d^{-1}}$};
	\draw (-0.5,2) node[right] {$\scriptstyle{w_1}$};
		\draw[fill=white] (2,1) circle (4pt);
		\draw[fill=white] (6,1) circle (4pt);
		\draw[fill=white] (-0.5,2) circle (4pt);
		\draw[fill=white] (-1.5,2) circle (4pt);
	\end{tikzpicture}}}	
		\]
and		
		
	\[
\mu_{d+1}^{2d}(D)=	\vcenter{\hbox{\begin{tikzpicture}[baseline=1.8ex,scale=0.4]
	\draw[thick, dotted] [->,>=latex]  (2,3) to (2,0);
	\draw[thick, dotted] [->,>=latex]  (6,3) to (6,0);
	\draw[thick, dotted] [->,>=latex]  (-1,1) to (-2,3);
	\draw[thick, dotted] [->,>=latex]  (-1,1) to (0,3);
	\draw[thick, dotted]   (-1,1) to (-1,0);
	\draw (-1,0) node[below] {$\scriptstyle{d+1}$};
	\draw (2,0) node[below] {$\scriptstyle{d+2}$};
	\draw (6,0) node[below] {$\scriptstyle{2d-1}$};
	\draw (4,2) node[below] {$\scriptstyle{\ldots}$};
	\draw (2,3) node[above] {$\scriptstyle{2}$};
	\draw (-2,3) node[above] {$\scriptstyle{1}$};
	\draw (0,3) node[above] {$\scriptstyle{d}$};
	\draw (6,3) node[above] {$\scriptstyle{d-1}$};
	\draw (6,1) node[right] {$\scriptstyle{w_{d-1}}$};
	\draw (2,1) node[right] {$\scriptstyle{w_2}$};
	\draw (-1.5,2) node[left] {$\scriptstyle{w_1^{-1}}$};
	\draw (-0.5,2) node[right] {$\scriptstyle{w_d^{-1}}$};
		\draw[fill=white] (2,1) circle (4pt);
		\draw[fill=white] (6,1) circle (4pt);
		\draw[fill=white] (-0.5,2) circle (4pt);
		\draw[fill=white] (-1.5,2) circle (4pt);
	\end{tikzpicture}
	=
	\begin{tikzpicture}[baseline=1.8ex,scale=0.4]
	\draw[thick, dotted] [->,>=latex]  (2,3) to (2,0);
	\draw[thick, dotted] [->,>=latex]  (6,3) to (6,0);
	\draw[thick, dotted] [->,>=latex]  (-1,1) to (-2,3);
	\draw[thick, dotted] [->,>=latex]  (-1,1) to (0,3);
	\draw[thick, dotted]   (-1,1) to (-1,0);
	\draw (-1,0) node[below] {$\scriptstyle{1}$};
	\draw (2,0) node[below] {$\scriptstyle{d+2}$};
	\draw (6,0) node[below] {$\scriptstyle{2d-1}$};
	\draw (4,2) node[below] {$\scriptstyle{\ldots}$};
	\draw (2,3) node[above] {$\scriptstyle{2}$};
	\draw (-2,3) node[above] {$\scriptstyle{d}$};
	\draw (0,3) node[above] {$\scriptstyle{d+1}$};
	\draw (6,3) node[above] {$\scriptstyle{d-1}$};
	\draw (6,1) node[right] {$\scriptstyle{w_{d-1}}$};
	\draw (2,1) node[right] {$\scriptstyle{w_2}$};
	\draw (-1.5,2) node[left] {$\scriptstyle{w_1w_d^{-1}}$};
	\draw (-0.5,2) node[right] {$\scriptstyle{w_1}$};
		\draw[fill=white] (2,1) circle (4pt);
		\draw[fill=white] (6,1) circle (4pt);
		\draw[fill=white] (-0.5,2) circle (4pt);
		\draw[fill=white] (-1.5,2) circle (4pt);
	\end{tikzpicture}}	
	}
		\]
		where for $\mu_1^{2d}(D)$ we use the AS relation and for $\mu_{d+1}^{2d}(D)$ we use the relation in $F_n$-beaded Jacobi diagrams.

		For $w_1 \not= 1$, $\mu_1^{2d}(D)+ \mu_{d+1}^{2d}(D) \neq 0$. We deduce that, for $n\geq 1$, $(\mu_{J_d^{F_n}})_{2d-1}\neq 0$ and $J_d^{F_n}$ is not an outer $Cat\mathcal{L}ie$-module.

\end{proof}

\begin{rem}
We can also show that $\mathbf{A}_1(n,-)_{\mathbf{0} }$ is not an outer functor by using Proposition \ref{A11} and \cite[Example 11.13]{PV} where it is proved that the functor $\mathcal{P}_2$ is not an outer functor.
\end{rem}

\subsection{On the functors $\mathbf{A}_d(0,-)$} \label{Ad0}
By Section \ref{rappels-poly}, we can consider the polynomial filtration of $\mathbf{A}_d(0,-)$ and by Proposition \ref{poly_1}, $ \mathbf{p}_{2d-i}(\mathbf{A}_d(0,-))=\mathbf{A}_d^{i}(0,-)$. So, the quotient 
$$ \mathbf{p}_{2d-i}(\mathbf{A}_d(0,-))/ \mathbf{p}_{2d-i-1}(\mathbf{A}_d(0,-))= \mathbf{A}_d^{i}(0,-)/\mathbf{A}_d^{i+1}(0,-)$$ corresponds to the functor denoted by $B_{d,i}$ in \cite{Katada}. 

By Corollary \ref{cor-thm}, $\alpha^{-1}( \mathbf{A}_d^{i}(0,-))\simeq (J_d^{F_0})_{\leq 2d-i}$, so 
$\alpha^{-1}\left( \mathbf{A}_d^{i}(0,-)/\mathbf{A}_d^{i+1}(0,-)\right)$
 is the atomic functor concentrated  in $2d-i$, where it is equal to the vector space $D_{2d-i}$ which is the quotient by AS and IHX relations of the $\mathbb{K}$-vector space generated by the $(2d-i)$-labelled Jacobi diagrams of degree $d$. The symmetric group $\mathfrak{S}_{2d-i}$ acts on $D_{2d-i}$ by the permutation of the labels of univalent vertices.
 
 By (\ref{atomic}) we obtain:
 $$B_{d,i}\simeq (\A^{\#})^{\otimes 2d-i} \underset{\mathfrak{S}_{2d-i}}{\otimes} D_{2d-i}$$
 corresponding to the description of the functor $B_{d,i}$ given by Katada in \cite[(3)]{Katada}.

The decomposition of $B_{d,0}$ given by Katada in \cite[Proposition 7.7]{KatadaII} is functorial. In other words, denoting by  $\mathbb{S}_{\lambda}$ the Schur functor associated with the partition $\lambda\vdash d$, we have, for $d\geq 0$:
\begin{equation}\label{B_d0}
B_{d,0}\simeq \underset{\lambda \vdash d}{\bigoplus} \mathbb{S}_{2 \lambda} \circ \A^{\#}
\end{equation}
where, for $\lambda=(\lambda_1, \ldots, \lambda_l)\vdash d$, $2\lambda$ is the partition $(2\lambda_1, \ldots,2\lambda_l)\vdash 2d$.

By \cite[Theorem 4.2]{V_ext}, $Ext^1_{\f(\gr;\mathbb{K})}(F,\mathbb{S}_{2d} \circ \A)=0$, for $F$ a polynomial functor so $\mathbb{S}_{2d} \circ \A$ is an injective object in the category of polynomial functors on $\gr$, so $\mathbb{S}_{2d} \circ \A^{\#}$ is a projective object in the category of polynomial functors on $\gr^{op}$. 
This allows us to give another proof of \cite[Theorem 10.1]{KatadaII}.
\begin{prop}\cite[Theorem 10.1]{KatadaII} \label{direct-decomposition}
For $d \in \nat$, we have a direct decomposition in $\f(\gr^{op}; \mathbb{K})$:
$$\mathbf{A}_d(0,-)= \mathbb{S}_{2d} \circ \A^{\#} \oplus \mathbf{A}_d(0,-)/\mathbb{S}_{2d} \circ \A^{\#}.$$
\end{prop}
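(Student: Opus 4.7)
The plan is to assemble the three ingredients established just before the statement: the polynomial filtration of $\mathbf{A}_d(0,-)$, the functorial decomposition (\ref{B_d0}) of its top graded piece, and the projectivity of $\mathbb{S}_{2d} \circ \A^{\#}$ in polynomial functors on $\gr^{op}$. By Proposition \ref{poly_1}, $\mathbf{A}_d(0,-)$ is polynomial of degree exactly $2d$, and $\mathbf{p}_{2d-1}(\mathbf{A}_d(0,-)) = \mathbf{A}_d^{1}(0,-)$. Hence the polynomial filtration yields a short exact sequence
\begin{equation*}
0 \to \mathbf{A}_d^{1}(0,-) \to \mathbf{A}_d(0,-) \to B_{d,0} \to 0
\end{equation*}
in $\f(\gr^{op};\mathbb{K})$, with top quotient $B_{d,0}$.

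Next, by the decomposition (\ref{B_d0}), the summand indexed by the partition $\lambda=(d)$ exhibits $\mathbb{S}_{2d}\circ \A^{\#}$ as a direct summand of $B_{d,0}$. Composing the quotient map with the projection onto this summand produces a natural surjection
\begin{equation*}
\pi: \mathbf{A}_d(0,-) \twoheadrightarrow \mathbb{S}_{2d}\circ \A^{\#}.
\end{equation*}
Since $\mathbf{A}_d(0,-)$ is polynomial of degree $2d$, this is a surjection inside the category of polynomial functors of degree at most $2d$ on $\gr^{op}$.

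I then invoke the projectivity statement recalled just above the proposition: the duality $(-)^{\#}$ converts the $\mathrm{Ext}^1$-vanishing $\mathrm{Ext}^1_{\f(\gr;\mathbb{K})}(F, \mathbb{S}_{2d}\circ \A)=0$ of \cite[Theorem 4.2]{V_ext} into the fact that $\mathbb{S}_{2d}\circ \A^{\#}$ is projective in the category of polynomial functors on $\gr^{op}$. Consequently $\pi$ admits a section $\sigma: \mathbb{S}_{2d}\circ \A^{\#} \to \mathbf{A}_d(0,-)$, and identifying $\mathbb{S}_{2d}\circ \A^{\#}$ with its image under $\sigma$ gives
\begin{equation*}
\mathbf{A}_d(0,-) \simeq \mathbb{S}_{2d}\circ \A^{\#} \;\oplus\; \ker(\pi), \qquad \ker(\pi) \simeq \mathbf{A}_d(0,-)/\mathbb{S}_{2d}\circ \A^{\#}.
\end{equation*}

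There is no genuine obstacle to this argument once the preparatory results are in place; the step that carries the real content has already been done outside this proof, namely the identification (\ref{B_d0}) of $B_{d,0}$ as a sum of Schur functors (so that the summand $\mathbb{S}_{2d}\circ \A^{\#}$ actually exists at the top of the filtration) and the translation, via vector space duality, of the injectivity of $\mathbb{S}_{2d}\circ \A$ in polynomial functors on $\gr$ into projectivity of $\mathbb{S}_{2d}\circ \A^{\#}$ on $\gr^{op}$. The proof itself is then a formal splitting argument, and avoids Katada's category-theoretic manipulations in $\mathbf{A}$ by working entirely inside the category of polynomial functors.
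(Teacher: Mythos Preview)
Your proof is correct and follows essentially the same route as the paper's: construct the surjection $\mathbf{A}_d(0,-)\twoheadrightarrow \mathbb{S}_{2d}\circ\A^{\#}$ from the polynomial filtration together with (\ref{B_d0}), then split it using the projectivity of $\mathbb{S}_{2d}\circ\A^{\#}$ in polynomial functors on $\gr^{op}$. Your write-up is slightly more explicit in naming the partition $\lambda=(d)$ and in recording the kernel/quotient identification, but the argument is the same.
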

\begin{proof}
By the polynomial filtration and (\ref{B_d0}), we have an epimorphism in $\f_{2d}(\gr^{op}; \mathbb{K})$:
\begin{equation} \label{Epi}
p: \mathbf{A}_d(0,-) \twoheadrightarrow  \mathbb{S}_{2d} \circ \A^{\#}.
\end{equation}
Since $\mathbb{S}_{2d} \circ \A^{\#}$ is a projective object in $\f_{2d}(\gr^{op}; \mathbb{K})$, the functor $\text{Hom}_{\f_{2d}(\gr^{op}; \mathbb{K})}(\mathbb{S}_{2d} \circ \A^{\#}, -): \f_{2d}(\gr^{op}; \mathbb{K}) \to \mathbf{Ab}$ is exact. Hence it sends the epimorphism (\ref{Epi}) to an epimorphism:
$$\text{Hom}_{\f_{2d}(\gr^{op}; \mathbb{K})}(\mathbb{S}_{2d} \circ \A^{\#}, \mathbf{A}_d(0,-))\twoheadrightarrow  \text{Hom}_{\f_{2d}(\gr^{op}; \mathbb{K})}(\mathbb{S}_{2d} \circ \A^{\#}, \mathbb{S}_{2d} \circ \A^{\#})$$
We deduce that $p$ has a section $s$, i.e. a natural transformation $s: \mathbb{S}_{2d} \circ \A^{\#} \to  \mathbf{A}_d(0,-)$ in $\f_{2d}(\gr^{op}; \mathbb{K})$  such that $p \circ s=Id_{\mathbb{S}_{2d} \circ \A^{\#}}$.
\end{proof}

Note that \cite[Proposition 10.2]{KatadaII} proves a stronger result, namely that $\mathbf{A}_d(0,-)/\mathbb{S}_{2d} \circ \A^{\#}$ is indecomposable.

\bibliographystyle{amsalpha}
\bibliography{Jacobi.bib}
\end{document}